\newtheorem{Thm}{Theorem}[section]
\newtheorem{Prop}[Thm]{Proposition}
\newtheorem{Cor}[Thm]{Corollary}
\theoremstyle{remark}
\numberwithin{equation}{section}
\newcommand{\Order}{\mathcal{O}}
\newcommand{\into}{\hookrightarrow}
\newcommand{\onto}{\twoheadrightarrow}
\newcommand{\isomto}{\overset{\sim}{\to}}
\newcommand{\compose}{\mathrel{\circ}}
\newcommand{\tensor}{\mathbin{\otimes}}
\newcommand{\ideal}[1]{\mathfrak{#1}}
\newcommand{\Z}{\mathbb{Z}}
\newcommand{\F}{\mathbb{F}}
\newcommand{\Affine}{\mathbb{A}}
\newcommand{\et}{\mathrm{et}}
\newcommand{\zar}{\mathrm{zar}}
\newcommand{\Tr}{\mathrm{Tr}}
\newcommand{\Gm}{\mathbf{G}_{m}}
\newcommand{\Ga}{\mathbf{G}_{a}}
\newcommand{\Et}{\mathrm{Et}}
\newcommand{\RP}{\mathrm{RP}}
\newcommand{\FRP}{\mathrm{FRP}}
\newcommand{\RPS}{\mathrm{RPS}}
\newcommand{\RPSZ}{\mathrm{RPSZ}}
\newcommand{\gr}{\mathrm{gr}}
\newcommand{\formal}[1]{\mathop{\text{``$#1$''}}}
\newcommand{\Sch}{\mathrm{Sch}}
\DeclareMathOperator{\Hom}{Hom}
\DeclareMathOperator{\Ext}{Ext}
\DeclareMathOperator{\Spec}{Spec}
\DeclareMathOperator{\sheafhom}{\mathscr{H}\mkern-5mu\mathit{om}}
\title{Duality theories for $p$-primary \'etale cohomology III}
\author{Kazuya Kato}
\address{
	Department of Mathematics, University of Chicago,
	5734 S.\ University Ave,
	Chicago, IL 60637, USA
}
\email{kkato@math.uchicago.edu}
\author{Takashi Suzuki}
\thanks{The second named author is a Research Fellow of Japan Society for the Promotion of Science}
\address{
	Department of Mathematics, Chuo University,
	1-13-27 Kasuga, Bunkyo-ku, Tokyo 112-8551, JAPAN
}
\email{tsuzuki@gug.math.chuo-u.ac.jp}
\date{May 7, 2019}
\subjclass[2010]{Primary: 14F30; Secondary: 14F20, 11G25}
\keywords{$p$-adic nearby cycles; duality; Grothendieck topologies}
\begin{document}

\begin{abstract}
	This paper is Part III of the series of work by the first named author
	on duality theories for $p$-primary \'etale cohomology,
	whose Parts I and II were published in 1986 and 1987, respectively.
	In this Part III, we study a duality for $p$-primary \'etale nearby cycles
	on smooth schemes over henselian discrete valuation rings
	of mixed characteristic $(0, p)$ whose residue field is not necessarily perfect.
\end{abstract}

\maketitle

\tableofcontents


\section{Introduction}
This is Part III of the series of work
\cite{Kat86} (Part I), \cite{Kat87} (Part II) by the first named author.
Part I gives a duality theory for $p$-primary \'etale sheaves
on smooth varieties in characteristic $p > 0$
with a relative theory for proper morphisms between them,
which is a relative version of Milne's duality theories \cite{Mil76}, \cite{Mil86}.
In that part, sheaves on a scheme $Y$ of characteristic $p$
are not considered over the usual small \'etale site $Y_{\et}$,
but over a much bigger site $Y_{\RP}$,
which is the category of relatively perfect $Y$-schemes endowed with the \'etale topology.
Recall from \cite{Kat86}, \cite{Kat87} that a $Y$-scheme $Y'$ is said to be \emph{relatively perfect}
if its relative Frobenius morphism $Y' \to Y'^{(p)}$ over $Y$ is an isomorphism.
If the base field $k$ satisfies $[k : k^{p}] = p^{r_{0}}$ for some finite $r_{0} \ge 0$
and $Y$ is a smooth $k$-scheme purely of dimension $d$,
then one of the results \cite[Theorem 4.3]{Kat86} in this case says that
the dlog part $\nu_{n}(r) = W_{n} \Omega_{Y, \log}^{r}$ of the de Rham-Witt sheaf
viewed as a sheaf on $Y_{\RP}$ plays the role of a dualizing sheaf,
where $r = r_{0} + d$.
This theory is generalized, in Part II \cite{Kat86}, to singular varieties $Y$.
It instead uses the site $Y_{\FRP}$ of flat relatively perfect $Y$-schemes endowed with the \'etale topology
and constructs a certain dualizing complex $K_{n, Y}$ over $Y_{\FRP}$.

In this paper, as Part III, we study a mixed characteristic version of the duality theory of Part I.
More precisely, let $K$ be a henselian discrete valuation field of mixed characteristic $(0, p)$
with ring of integers $\Order_{K}$ and residue field $k$.
Assume that $[k : k^{p}] = p^{r_{0}}$ for some finite $r_{0} \ge 0$.
Let $X$ be a smooth $\Order_{K}$-scheme of relative dimension $d$
and
	\[
			U
		\overset{j}{\into}
			X
		\overset{i}{\hookleftarrow}
			Y
	\]
the inclusions of the generic fiber $U$ and the special fiber $Y$.
Set $r = r_{0} + d$.
We say that a $Y$-scheme is \emph{relatively perfectly smooth}%
\footnote{
	The terminology ``perfectly smooth'' without ``relatively'' for morphisms between perfect schemes
	is introduced in \cite[Definition A.25]{Zhu17}.
	In \cite[Footnote 20 to Definition A.13]{Zhu17},
	Zhu attributes this type of usage of ``perfectly'' to Brian Conrad.
}
if it is Zariski locally isomorphic to the relative perfection (\cite[Definition 1.8]{Kat86}) of a smooth $Y$-scheme.
Let $Y_{\RPS}$ be the category of relatively perfectly smooth $Y$-schemes.
Let $X_{\RPS}$ be the category of $X$-schemes flat over $\Order_{K}$
whose special fibers are relatively perfectly smooth over $Y$.
Endow these categories with the \'etale topology.
Let $U_{\Et}$ be the big \'etale site of $U$.
The base change functors define morphisms of sites
	\[
			U_{\Et}
		\overset{j}{\to}
			X_{\RPS}
		\overset{i}{\gets}
			Y_{\RPS}.
	\]
Our duality is about the nearby cycle functor $R \Psi = i^{\ast} R j_{\ast}$ in this setting.
For an integer $s$,
denote the $s$-th Tate twist of the \'etale sheaf $\Lambda_{n} = \Z / p^{n} \Z$ on $U$
by $\Lambda_{n}(s)$.
In the derived category of sheaves of $\Lambda_{n}$-modules on $Y_{\RPS}$,
the derived tensor product is denoted by $\tensor^{L}$
and the derived sheaf-Hom functor by $R \sheafhom_{Y_{\RPS}}$.
The main theorem of this paper, which is Theorem \ref{thm: main}, is the following statement.

\begin{Thm} \label{thm: mainthm in Introduction}
	For any pair of integers $s, t$ with $s + t = r + 1$,
	there exists a canonical morphism
		\[
				R \Psi \Lambda_{n}(s)[s]
			\tensor^{L}
				R \Psi \Lambda_{n}(t)[t]
			\to
				\nu_{n}(r)
		\]
	that is a perfect duality:
		\[
				R \Psi \Lambda_{n}(s)[s]
			\isomto
				R \sheafhom_{Y_{\RPS}} \bigl(
					R \Psi \Lambda_{n}(t)[t],
					\nu_{n}(r)
				\bigr).
		\]
\end{Thm}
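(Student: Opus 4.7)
My plan follows the Bloch--Kato template: construct the pairing as the cup product on the generic fibre composed with a trace $R\Psi\Lambda_n(r+1)[r+1]\to\nu_n(r)$, and then deduce perfectness by reducing, along a Bloch--Kato style filtration of $R\Psi$, to the self-duality of logarithmic de Rham--Witt sheaves on $Y$ proved essentially in Part~I. A preliminary d\'evissage using the short exact sequences $0\to\Lambda_1\to\Lambda_n\to\Lambda_{n-1}\to 0$, which are compatible with Tate twists and with the trace, reduces the statement to the case $n=1$.

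\textbf{Construction of the pairing.} Multiplication of Tate twists on $U_{\Et}$ yields $\Lambda_n(s)\tensor^L\Lambda_n(t)\to\Lambda_n(r+1)$, and the (lax) monoidal structure of $R\Psi=i^{\ast}Rj_{\ast}$ then supplies
\[
R\Psi\Lambda_n(s)[s]\tensor^L R\Psi\Lambda_n(t)[t]\to R\Psi\Lambda_n(r+1)[r+1].
\]
I would then construct the trace $\Tr\colon R\Psi\Lambda_n(r+1)[r+1]\to\nu_n(r)$ from the Bloch--Kato symbol map: the symbol $\{a_1,\dots,a_r,\pi\}$, with $\pi$ a uniformizer of $\Order_K$ and $a_i\in\Order_X^{\times}$, should be sent to $\dlog\{\bar a_1,\dots,\bar a_r\}\in\nu_n(r)$. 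The main content here is showing that this prescription lifts to a morphism of complexes on $Y_{\RPS}$ that is independent of the choices, and that it is compatible with devissage in $n$.

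\textbf{Structure of $R\Psi$ and reduction to Part~I.} The essential technical input is a Bloch--Kato style structure theorem on the RPS site: each $R^q\Psi\Lambda_n(s)$ should carry a two-step filtration whose graded pieces are shifts of logarithmic de Rham--Witt sheaves $\nu_n(\ast)$ on $Y_{\RPS}$, corresponding roughly to the subsheaf generated by symbols involving a uniformizer and the quotient generated by symbols of units. Granted such a description, the constructed pairing restricts on associated graded pieces to the canonical cup products
\[
\nu_n(a)\tensor^L\nu_n(b)\to\nu_n(r),\qquad a+b=r,
\]
and Part~I (in the form $[k:k^p]=p^{r_0}$, $r=r_0+d$) provides that these are perfect dualities on $Y_{\RP}$. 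After checking that the duality descends to, or is equivalently formulated on, the larger site $Y_{\RPS}$, a filtration (or spectral sequence) comparison upgrades perfectness on graded pieces to perfectness of the original pairing.

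\textbf{Main obstacle.} The heart of the proof, as I see it, will be Step~3: formulating and proving the Bloch--Kato filtration on $R\Psi\Lambda_n(s)$ in the RPS topology with a possibly non-perfect residue field, identifying the graded pieces with logarithmic Hodge--Witt sheaves, and matching the induced pairings with the self-dualities of Part~I. A non-trivial subsidiary step is the site-theoretic comparison between $Y_{\RP}$ and $Y_{\RPS}$ needed to transport Part~I's duality to the setting used here; one must also verify that the trace $\Tr$ constructed via symbols genuinely realises the top-degree piece of the duality, as otherwise the inductive/graded argument cannot close.
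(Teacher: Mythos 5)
Your overall frame (symbol map and cup product for the pairing, Bloch--Kato filtration of $R\Psi$, reduction of $n$ by d\'evissage, Part~I duality as the final input) is the right one and matches the paper in broad strokes. But there is a genuine error in Step~3 that would prevent the argument from closing.

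You assert that the graded pieces of the Bloch--Kato filtration on $R^q\Psi\Lambda(q)$ are logarithmic Hodge--Witt sheaves $\nu_n(\ast)$, and that the induced pairings are the cup products $\nu_n(a)\tensor^L\nu_n(b)\to\nu_n(r)$ whose perfectness is a consequence of Part~I. This is only true for $\gr^0$: the paper's Proposition \ref{prop: calculation of graded pieces} shows $\gr^0 R^q\Psi\Lambda(q)\cong \nu(q)\oplus\nu(q-1)$, but the higher graded pieces $\gr^m$ for $1\le m<e'$ are $\Omega_Y^{q-1}$ (if $p\nmid m$) or $d\Omega_Y^{q-1}\oplus d\Omega_Y^{q-2}$ (if $p\mid m$) --- coherent sheaves of differential forms, not logarithmic ones. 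The unit part $U^1 H^q\mathcal{E}$ is therefore a finite successive extension of locally free $\Order_Y$-modules, and the logarithmic self-duality of Part~I simply does not apply to it. In fact the pairing on the unit parts lands in $\nu(r)[1]$, not $\nu(r)$, a degree shift your sketch does not account for. The paper must therefore split the pairing into two separate problems: a $\gr^0\times\gr^0$ pairing with values in $\nu(r)$, handled by the logarithmic duality [Theorem 3.2 (i) of Kat86], and a $U^1\times U^1$ pairing with values in $\nu(r)[1]$. Establishing that the pairing does split this way (Proposition \ref{prop: splitting pairing to unit part and gr zero part}) is itself nontrivial; it relies on vanishing statements like $\sheafhom(U^1 H^s\mathcal{E},\nu(r))=0$, which come from the coherent vanishing in [Theorem 3.2 (ii) of Kat86].

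Even granting the split, the $U^1\times U^1$ duality is the hard part and requires a separate strategy your proposal does not anticipate: the paper passes to the Zariski topology $Y_{\RPSZ}$, introduces the cokernel sheaf $\xi(r)=\Coker(C-1\colon\Omega_Y^r\to\Omega_Y^r)$, and shows $R\varepsilon_\ast R\sheafhom_{Y_{\RPS}}(M,\nu(r))\cong R\sheafhom_{Y_{\RPSZ}}(M,\xi(r))[-1]$ for coherent $M$, via homotopy invariance of Zariski cohomology with $\nu(r)$-coefficients (Bloch, Geisser--Levine). The filtration-by-graded-pieces comparison for $U^1$ then takes place over $Y_{\RPSZ}$ and uses Proposition \ref{prop: calculation of graded pieces in Zariski}, which additionally exhibits $U^{e'}H^q\mathcal{E}'\ne 0$ on the Zariski site (it is $\Omega_Y^{q-1}/(1+aC)\Omega_{Y,d=0}^{q-1}\oplus\cdots$), whereas $U^{e'}H^q\mathcal{E}=0$ in the \'etale topology. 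Finally, you should be aware that the technical reduction to Bloch--Kato's smooth setting goes through ind-smooth liftings of local rings of relatively perfectly smooth $Y$-schemes (Section 4 of the paper), together with Fujiwara--Gabber formal base change; without this device the identification of $R\Psi$ and its filtration with BK86's nearby cycles --- the input you invoke implicitly --- would not be available.
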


For example, if $X = \Spec \Order_{K}$ and $k$ is algebraically closed,
then this theorem, on $k$-valued points, gives an exact sequence
	\[
			0
		\to
			\Ext_{k_{\RPS}}^{1}(G_{n}, \Z / p^{n} \Z)
		\to
			H^{1}(K, \Z / p^{n} \Z)
		\to
			\Hom(\mu_{p^{n}}(K), \Z / p^{n} \Z)
		\to
			0,
	\]
where $G_{n} = R^{1} \Psi \Lambda_{n}(1)$ is the group $K^{\times} / (K^{\times})^{p^{n}}$
equipped with a structure of the perfection of a unipotent algebraic group over $k$
and $\Ext_{k_{\RPS}}^{1}$ takes this structure into account
(and $\mu_{p^{n}}(K)$ is the finite abstract group of $p^{n}$-th roots of unity in $K$).
This recovers the $p$-primary part of the mixed characteristic case of
Serre's local class field theory \cite{Ser61} for $K$.
Hazewinkel's generalization \cite[Appendice]{DG70} of Serre's theory with arbitrary perfect residue field $k$
is also contained in this theorem.
This sheaf-theoretic formulation of Serre-Hazewinkel's theory is closely related to
the formulation using the ``rational \'etale site'' introduced in \cite{Suz13}.

For $X = \Spec \Order_{K}$ and not necessarily perfect $k$,
the theorem is a sheaf-theoretic version of class field theory
for local fields whose residue field is of arithmetic nature,
such as \cite{Kat80}, \cite{Par84} (for $k$ a higher local field) and \cite{Kat82} (for $k$ a global function field).
For general $X$ and $k$,
it is a $p$-primary version of a special case of the duality for prime-to-$p$ nearby cycles
by Gabber-Illusie \cite[Th\'eor\`eme 4.2]{Ill94}.

In the proof of the theorem,
we will use the computations of graded pieces of $p$-primary nearby cycles
by the first named author with Bloch \cite{BK86}.
Since the calculations in \cite{BK86} are for smooth schemes over $\Order_{K}$,
we limit ourselves with $X$ smooth over $\Order_{K}$
and work with the site $Y_{\RPS}$ of relatively perfectly smooth $Y$-schemes for simplicity.
Of course our duality theory should be extended to non-smooth (or at least semistable) $X$ and
more general constructible coefficients than $\Lambda_{n}(s)$,
so that it fully gives a mixed characteristic version of Part II \cite{Kat87} of the present series of work.
We will not pursue such an extension in this paper.

The first author is partially supported by NSF grant DMS 1601861.
The second author is supported by JSPS KAKENHI Grant Number JP18J00415.


\section{The relatively perfectly smooth site}
\label{sec: The relatively perfectly smooth site}

Let $k$ be a field of characteristic $p > 0$
such that $[k \colon k^{p}] = p^{r_{0}}$ for some finite $r_{0} \ge 0$.
Let $Y$ be a $k$-scheme.
Recall from \cite[Definition 1.1]{Kat86} that a $Y$-scheme $Y'$ is said to be relatively perfect
if the diagram
	\[
		\begin{CD}
			Y' @>>> Y' \\
			@VVV @VVV \\
			Y @>>> Y
		\end{CD}
	\]
is cartesian, where the horizontal morphisms are the absolute Frobenius morphisms
and the vertical morphisms are the structure morphisms.
This means that the relative Frobenius morphism $Y' \to Y'^{(p)}$ over $Y$ is an isomorphism.
Let $Y_{\RP}$ be the relatively perfect site of $Y$ defined in \cite[\S 2]{Kat86}.
It is the category of relatively perfect $Y$-schemes endowed with the \'etale topology.
Let $(\Sch / Y)$ be the category of all $Y$-schemes.
Assume that $Y$ is smooth over $k$.
Then the inclusion functor $Y_{\RP} \into (\Sch / Y)$
admits a right adjoint $(\Sch / Y) \to Y_{\RP}$
denoted by $Y' \mapsto Y'^{\RP}$ (\cite[Definition 1.8]{Kat86}),
and $Y'^{\RP}$ is called the relative perfection of $Y'$.
Let $n \ge 1$ be an integer.
Denote $\Lambda_{n} = \Z / p^{n} \Z$
and set $\Lambda = \Lambda_{1} = \Z / p \Z$.
For a site $S$, we denote the category of sheaves of $\Lambda_{n}$-modules on $S$ by $M(S, \Lambda_{n})$
and its derived category by $D(S, \Lambda_{n})$.
The ring $\Lambda_{n}$ viewed as a sheaf of rings on $S$ is denoted by
$(\Lambda_{n})_{S}$ or simply just $\Lambda_{n}$.
As in \cite[Definition 4.2.3]{Kat86},
we denote by $D_{0}(Y_{\RP}, \Lambda_{n})$
the triangulated subcategory of $D(Y_{\RP}, \Lambda_{n})$
generated by relative perfections of coherent sheaves on $Y$ locally free of finite rank
regarded as complexes of $\Lambda_{n}$-modules concentrated in degree zero.
As explained in \cite[\S 4]{Kat86},
if $Y$ is finite-dimensional,
the dlog part of the de Rham-Witt complex
$\nu_{n}(s) = W_{n} \Omega_{Y, \log}^{s}$ for any $s$
can be regarded as an object of $D_{0}(Y_{\RP}, \Lambda_{n})$
such that its section $\Gamma(Y', \nu_{n}(s))$ for any relatively perfect $Y$-scheme $Y'$
is given by the group $W_{n} \Omega_{Y', \log}^{s}$.
We set $\nu(s) = \nu_{1}(s) = \Omega_{Y, \log}^{s}$.

\begin{Thm}[{\cite[Theorem 4.3]{Kat86}}]
	Assume that $Y$ is smooth and purely of dimension $d$.
	Set $r = r_{0} + d$.
	Then the object $\nu_{n}(r)$ is a dualizing object for $D_{0}(Y_{\RP}, \Lambda_{n})$,
	namely the derived sheaf-Hom functor
		\[
				D_{Y_{\RP}}
			=
				R \sheafhom_{(\Lambda_{n})_{Y_{\RP}}}(\;\cdot\;, \nu_{n}(r))
		\]
	for $D(Y_{\RP}, \Lambda_{n})$ gives an auto-equivalence
	on $D_{0}(Y_{\RP}, \Lambda_{n})$ with inverse itself.
\end{Thm}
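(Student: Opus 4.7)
The plan is to prove the theorem by reducing to the case $n=1$ and then checking the duality on generators of $D_0(Y_{\RP}, \Lambda)$. For the first reduction, one uses the short exact sequence $0 \to \nu_1(r) \to \nu_n(r) \to \nu_{n-1}(r) \to 0$ coming from multiplication by $p^{n-1}$ on the de Rham-Witt complex, together with induction on $n$ and the fact that biduality is preserved in distinguished triangles. This reduces the problem to showing that $\nu(r) = \Omega^r_{Y, \log}$ is a dualizing object for $D_0(Y_{\RP}, \Lambda)$.

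By definition, $D_0(Y_{\RP}, \Lambda)$ is generated as a triangulated category by the relative perfections $\mathcal{F}^{\RP}$ of locally free coherent sheaves $\mathcal{F}$ on $Y$. Since the biduality property is preserved under the triangulated operations, it suffices to verify for such generators that $D_{Y_{\RP}}(\mathcal{F}^{\RP})$ again lies in $D_0(Y_{\RP}, \Lambda)$ and that the natural map $\mathcal{F}^{\RP} \to D_{Y_{\RP}}(D_{Y_{\RP}}(\mathcal{F}^{\RP}))$ is an isomorphism. Both assertions are local on $Y$, so one may assume that $Y$ is affine, admits an \'etale morphism to $\Affine^d_k$, and that $\mathcal{F}$ is free.

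The main computational input is the Cartier exact sequence on $Y_{\RP}$,
$$
0 \to \nu(r) \to \Omega^r_Y \xrightarrow{1 - C^{-1}} \Omega^r_Y / d \Omega^{r-1}_Y \to 0,
$$
which presents $\nu(r)$ in terms of (relative perfections of) coherent sheaves. Since $r = r_0 + d$ equals the rank of $\Omega^1_{Y / \F_p}$, the sheaf $\Omega^r_Y$ is locally free of rank one, and the quotient $\Omega^r_Y / d \Omega^{r-1}_Y$ is locally free as well (it is identified with $\Omega^r_Y$ via $C^{-1}$ modulo exact forms). Applying $R \sheafhom(\mathcal{F}^{\RP}, -)$ to the Cartier sequence and passing to the long exact sequence then expresses $D_{Y_{\RP}}(\mathcal{F}^{\RP})$ itself as a Cartier-type complex of relative perfections of locally free coherent sheaves; iterating this recovers $\mathcal{F}^{\RP}$ up to canonical isomorphism.

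The main obstacle is the local $R \sheafhom$ computation between relative perfections of locally free coherent sheaves on $Y_{\RP}$. Concretely, one needs that the higher $\sheafext^i$ sheaves vanish and that $\sheafhom(\mathcal{F}^{\RP}, \mathcal{G}^{\RP})$ coincides with the relative perfection of $\sheafhom_{\Order_Y}(\mathcal{F}, \mathcal{G})$. This amounts to transporting the Grothendieck-Serre duality picture on $Y$ over to the \'etale topology on the relatively perfect site, via the right adjoint $(-)^{\RP}$ and the local presentation of relatively perfect $Y$-schemes as inductive limits of \'etale-then-Frobenius towers over $Y$. Once this translation is established, biduality on $Y_{\RP}$ follows from ordinary biduality of locally free $\Order_Y$-modules against the line bundle $\Omega^r_Y$.
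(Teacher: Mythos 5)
This theorem is cited in the paper directly as \cite[Theorem 4.3]{Kat86} and is not reproved here, so there is no in-paper proof to compare against. Judged against what the present paper uses from \cite{Kat86}, your overall scaffolding (reduction to $n=1$, the Cartier presentation of $\nu(r)$, checking duality on the generators $\mathcal{F}^{\RP}$ of $D_0$) has roughly the right shape, but the ``main obstacle'' you acknowledge at the end is precisely the theorem's content, and the resolution you sketch for it is wrong.

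You assert that $\sheafhom_{Y_{\RP}}(\mathcal{F}^{\RP},\mathcal{G}^{\RP})$ is the relative perfection of $\sheafhom_{\Order_Y}(\mathcal{F},\mathcal{G})$, with vanishing higher $\sheafext$, so that the claim reduces to ordinary coherent biduality against the line bundle $\Omega_Y^r$. That fails on $Y_{\RP}$ with the \'etale topology: $\Lambda$-linear (i.e.\ additive, $\F_p$-linear) homomorphisms between vector groups in characteristic $p$ include all additive-polynomial maps built from the Frobenius, so already $\sheafhom(\Ga,\Ga)$ is much larger than $\Ga$, and the higher $\sheafext$ do not vanish. What actually governs duality here is the pairing into $\nu(r)$ specifically, mediated by the Cartier operator; this paper repeatedly invokes \cite[Theorem 3.2]{Kat86}, part (ii) of which says that $\sheafhom(M,\nu(r))=0$ for locally free coherent $M$, with the perfect pairing concentrated in $\sheafext^1$. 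So the picture is not coherent biduality against $\Omega_Y^r$ transported along $(-)^{\RP}$: there is a degree shift and the Frobenius/Cartier structure is essential, not auxiliary, and your sketch assumes away exactly this. A secondary gap: in your $n\to 1$ reduction, the inductive hypothesis concerns $R\sheafhom_{\Lambda_{n-1}}$ while the triangle sits in $D(Y_{\RP},\Lambda_n)$; one needs a base-change comparison of $R\sheafhom$ across the coefficient rings, of the type that \cite[(4.2.4)]{Kat86} and Propositions 8.2--8.3 of this paper supply, before ``biduality is preserved in distinguished triangles'' can be invoked.
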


Let $Y$ be a smooth $k$-scheme.
We say that a $Y$-scheme is relatively perfectly smooth
if it is Zariski locally isomorphic to the relative perfection
of a smooth $Y$-scheme.
Let $Y_{\RPS}$ be the category of relatively perfectly smooth $Y$-schemes with $Y$-scheme morphisms.
Endow it with the \'etale topology.
The inclusion functor $Y_{\RPS} \into Y_{\RP}$ defines a morphism of topologies
in the sense of \cite[Definition 2.4.2]{Art62}.
It induces an (exact) pushforward functor $\alpha_{\ast}$
from $M(Y_{\RP}, \Lambda_{n})$ to $M(Y_{\RPS}, \Lambda_{n})$,
which has a left adjoint $\alpha^{\ast}$.
(Note that $\alpha^{\ast}$ is not necessarily exact
since the category of smooth $Y$-schemes is not closed under finite inverse limits.)
Note that $\alpha_{\ast}$ sends flask sheaves to flask sheaves
and induces Leray spectral sequences (\cite[\S 2.4]{Art62}).

\begin{Prop} \label{prop: from RP to RPS}
	Let $Y$ be a smooth $k$-scheme
	and $F \in D_{0}(Y_{\RP}, \Lambda_{n})$.
	Then the natural morphism
		\[
				\alpha_{\ast}
				R \sheafhom_{(\Lambda_{n})_{Y_{\RP}}}(F, G)
			\to
				R \sheafhom_{(\Lambda_{n})_{Y_{\RPS}}}(\alpha_{\ast} F, \alpha_{\ast} G)
		\]
	in $D(Y_{\RPS}, \Lambda_{n})$ for any $G \in D(Y_{\RP}, \Lambda_{n})$ is an isomorphism.
\end{Prop}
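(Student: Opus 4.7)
The plan is to proceed by d\'evissage on $F$ so as to reduce to the case of the structure sheaf $\Order_{Y}^{\RP}$, and then to recast the comparison as an isomorphism for the counit of the adjunction $(\alpha^{\ast}, \alpha_{\ast})$. Since $\alpha_{\ast}$ is exact, both $F \mapsto \alpha_{\ast} R \sheafhom_{Y_{\RP}}(F, G)$ and $F \mapsto R \sheafhom_{Y_{\RPS}}(\alpha_{\ast} F, \alpha_{\ast} G)$ are contravariant triangulated functors of $F$, and the natural comparison is a natural transformation between them. Hence the full subcategory of $D(Y_{\RP}, \Lambda_{n})$ on which the map is an isomorphism is triangulated, so by the definition of $D_{0}(Y_{\RP}, \Lambda_{n})$ it suffices to treat $F = \mathcal{E}^{\RP}$ for a coherent locally free sheaf $\mathcal{E}$ on $Y$ of finite rank.

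Being an isomorphism in $D(Y_{\RPS}, \Lambda_{n})$ is Zariski-local on $Y$, so one may take $\mathcal{E} \cong \Order_{Y}^{\oplus m}$. Since $R \sheafhom(-,G)$ and $\alpha_{\ast}$ are additive in finite direct sums, the case $F = \Order_{Y}^{\RP}$ then suffices.

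For the main step I would run a Yoneda argument with test object $H \in D(Y_{\RPS}, \Lambda_{n})$. Combining the tensor-hom and $(\alpha^{\ast}, \alpha_{\ast})$-adjunctions with the fact that $\alpha^{\ast}$ commutes with $\tensor^{L}$, one obtains
\[
		\Hom_{Y_{\RPS}}\bigl(H, \alpha_{\ast} R \sheafhom_{Y_{\RP}}(F, G)\bigr)
	=
		\Hom_{Y_{\RP}}\bigl(\alpha^{\ast} H \tensor^{L} F, G\bigr)
\]
and
\[
		\Hom_{Y_{\RPS}}\bigl(H, R \sheafhom_{Y_{\RPS}}(\alpha_{\ast} F, \alpha_{\ast} G)\bigr)
	=
		\Hom_{Y_{\RP}}\bigl(\alpha^{\ast} H \tensor^{L} \alpha^{\ast} \alpha_{\ast} F, G\bigr).
\]
The comparison morphism of the proposition is induced by the counit $\alpha^{\ast} \alpha_{\ast} F \to F$, and, using $\alpha^{\ast} \Lambda_{n} = \Lambda_{n}$ (take $H = \Lambda_{n}$), it is an isomorphism if and only if this counit is an isomorphism in $D(Y_{\RP}, \Lambda_{n})$ for $F = \Order_{Y}^{\RP}$.

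The main obstacle is the verification of the counit isomorphism $\alpha^{\ast} \alpha_{\ast} \Order_{Y}^{\RP} \isomto \Order_{Y}^{\RP}$: geometrically, it says that the structure sheaf on $Y_{\RP}$ is recovered from its restriction to $Y_{\RPS}$. The plan is to draw on the structure theory of relatively perfect $Y$-schemes over the smooth base: every such scheme should \'etale-locally (or as a filtered inverse limit) reduce to relatively perfectly smooth $Y$-schemes, and $\Order_{Y}^{\RP}$ respects this reduction through filtered colimits of sections. Carrying this geometric input out rigorously, drawing on the foundational material in \cite{Kat86}, is where the bulk of the work lies.
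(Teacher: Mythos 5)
Your formal reductions (d\'evissage to $F = \Order_Y^{\RP}$, then Yoneda against test objects $H$, then identifying the obstruction with the counit) are sound once you replace $\alpha^{\ast}$ by its left derived functor $L\alpha^{\ast}$ throughout — the paper explicitly warns that $\alpha^{\ast}$ is not exact. But at that point the argument has not actually advanced: taking $H = \Lambda_n$ shows the counit claim $L\alpha^{\ast}\alpha_{\ast}\Order_Y^{\RP} \isomto \Order_Y^{\RP}$ is \emph{equivalent} to the proposition for $F = \Order_Y^{\RP}$ and arbitrary $G$, so you have reformulated the target rather than reduced it. The entire content is therefore the step you defer to the last paragraph, and the hint you give there is the wrong one. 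It is not true that every relatively perfect $Y$-scheme is, even \'etale-locally or as a filtered inverse limit, built out of relatively perfectly smooth ones: already for $Y = \Spec k$ with $k$ perfect, the relatively perfect $Y$-schemes are \emph{all} perfect $k$-schemes (perfections of arbitrary, possibly badly singular, $k$-schemes), while the relatively perfectly smooth ones are only perfections of smooth $k$-schemes. No such geometric structure theorem is available.

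What actually closes the gap, in either approach, is homological rather than geometric: Mac Lane's resolution. The paper resolves $F$ on $Y_{\RP}$ by sheaves that are direct summands of direct sums of $\Lambda_n[F^m]$, uses that $F^m$ is representable by a relatively perfectly smooth $Y$-scheme (hence an object of both sites), and applies the Leray spectral sequence for the exact, flask-preserving $\alpha_{\ast}$ to compare $R\Gamma((F^m)_{\RP}, G)$ with $R\Gamma((F^m)_{\RPS}, \alpha_{\ast}G)$ directly — without ever computing $\alpha^{\ast}$ or $L\alpha^{\ast}$. If you insist on the counit formulation, the same device works: resolve $\alpha_{\ast}\Order_Y^{\RP}$ on $Y_{\RPS}$ by Mac Lane's resolution, whose terms $\Lambda_n[(\alpha_{\ast}\Ga)^m]$ are flat and carried by $\alpha^{\ast}$ to $\Lambda_n[\Ga^m]$ (since $\Ga^m$ is an object of both sites and $\alpha^{\ast}$ preserves such representables), so that $L\alpha^{\ast}\alpha_{\ast}\Ga$ is computed by Mac Lane's resolution of $\Ga$ on $Y_{\RP}$ and the counit is a quasi-isomorphism. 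Either way, you must bring in Mac Lane's resolution; the structure-theoretic shortcut you propose does not exist.
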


\begin{proof}
	It is enough to show the statement for the case
	where $F$ is the relative perfection of a coherent sheaf on $Y$ locally free of finite rank.
	Such a sheaf $F$ is representable by a relatively perfectly smooth $Y$-scheme.
	
	For a relatively perfectly smooth $Y$-scheme $Y'$,
	let $Y_{\RPS} / Y'$ be the localization of $Y_{\RPS}$ at $Y'$ (\cite[Definition 2.4.3]{Art62}),
	i.e., the category of $Y'$-schemes relatively perfectly smooth over $Y$ endowed with the \'etale topology.
	Taking $R \Gamma(Y'_{\RPS}, \; \cdot\;)$ for any relatively perfectly smooth $Y$-scheme $Y'$
	and using the Leray spectral sequence,
	we see that it is enough to show the invertibility of the morphism
		\begin{equation} \label{eq: rel per to rel per smooth}
				R \Hom_{(\Lambda_{n})_{Y'_{\RP}}}(F_{Y'}, G_{Y'})
			\to
				R \Hom_{(\Lambda_{n})_{Y_{\RPS} / Y'}}(\alpha_{\ast} F_{Y'}, \alpha_{\ast} G_{Y'})
		\end{equation}
	in the derived category of $\Lambda_{n}$-modules,
	where $F_{Y'}$ and $G_{Y'}$ are the restrictions to $Y'$.
	Let $M(F_{Y'})$ be Mac Lane's resolution of $F_{Y'}$ (\cite{Mac57}).
	Its homogeneous part at any degree is
	a direct summand of a direct sum of sheaves of the form $\Lambda_{n}[F_{Y'}^{m}]$
	for various $m \ge 0$,
	where $\Lambda_{n}[F_{Y'}^{m}]$ is the sheafification of the presheaf
	that assigns to each relatively perfect $Y'$-scheme $Y''$
	the free $\Lambda_{n}$-module generated by the set $F_{Y'}^{m}(Y'') = F^{m}(Y'')$.
	By assumption on $F$,
	we know that both $F_{Y'}^{m}$ and $\alpha_{\ast} F_{Y'}^{m}$ are representable by
	the relatively perfectly smooth $Y'$-scheme $F^{m} \times_{Y} Y'$.
	Hence the both sides of \eqref{eq: rel per to rel per smooth} can be written
	in terms of cohomology complexes of $F_{Y'}^{m}$ for various $m$.
	The morphism
		\[
				R \Gamma((F_{Y'}^{m})_{\RP}, G_{Y'})
			\to
				R \Gamma((F_{Y'}^{m})_{\RPS}, \alpha_{\ast} G_{Y'})
		\]
	is an isomorphism by the Leray spectral sequence.
	This implies the result.
\end{proof}

Let $D_{0}(Y_{\RPS}, \Lambda_{n})$ be the image of
$D_{0}(Y_{\RP}, \Lambda_{n})$ under $\alpha_{\ast}$.
We denote the image of each object $F \in D_{0}(Y_{\RPS}, \Lambda_{n})$
by the same letter $F$.

\begin{Cor}
	Assume that $Y$ is smooth and purely of dimension $d$
	and set $r = r_{0} + d$.
	The functor $\alpha_{\ast}$ gives an equivalence of categories
	$D_{0}(Y_{\RP}, \Lambda_{n}) \isomto D_{0}(Y_{\RPS}, \Lambda_{n})$.
	The derived sheaf-Hom functor
		\[
				D_{Y_{\RPS}}
			=
				R \sheafhom_{(\Lambda_{n})_{Y_{\RPS}}}(\;\cdot\;, \nu_{n}(r))
		\]
	for $D(Y_{\RPS}, \Lambda_{n})$ gives an auto-equivalence
	on $D_{0}(Y_{\RPS}, \Lambda_{n})$ with inverse itself.
\end{Cor}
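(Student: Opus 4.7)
The plan is to deduce both assertions from Proposition \ref{prop: from RP to RPS} together with the theorem of Kato recalled just above (the duality on $D_{0}(Y_{\RP}, \Lambda_{n})$).

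First I would verify that $\alpha_{\ast}$ is fully faithful on $D_{0}(Y_{\RP}, \Lambda_{n})$. Given $F, F' \in D_{0}(Y_{\RP}, \Lambda_{n})$, Proposition \ref{prop: from RP to RPS} yields an isomorphism
\[
        \alpha_{\ast} R \sheafhom_{(\Lambda_{n})_{Y_{\RP}}}(F, F')
    \isomto
        R \sheafhom_{(\Lambda_{n})_{Y_{\RPS}}}(\alpha_{\ast} F, \alpha_{\ast} F').
\]
Since $\alpha_{\ast}$ is exact and preserves flasque sheaves, $R \Gamma(Y_{\RPS}, \alpha_{\ast}(\,\cdot\,)) = R \Gamma(Y_{\RP}, \,\cdot\,)$ via the Leray spectral sequence. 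Taking $R \Gamma(Y_{\RPS}, \,\cdot\,)$ of the above then gives
\[
        R \Hom_{(\Lambda_{n})_{Y_{\RP}}}(F, F')
    \isomto
        R \Hom_{(\Lambda_{n})_{Y_{\RPS}}}(\alpha_{\ast} F, \alpha_{\ast} F'),
\]
which is fully faithfulness. Essential surjectivity onto $D_{0}(Y_{\RPS}, \Lambda_{n})$ holds by the very definition of the latter as the image of $\alpha_{\ast}$, so $\alpha_{\ast}$ is an equivalence. This yields the first assertion.

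Next I would show that $D_{Y_{\RPS}}$ commutes with $\alpha_{\ast}$ on $D_{0}(Y_{\RP}, \Lambda_{n})$. Since $\nu_{n}(r)$ lies in $D_{0}(Y_{\RP}, \Lambda_{n})$ (so that $\alpha_{\ast} \nu_{n}(r) = \nu_{n}(r)$ by our convention on the image), applying Proposition \ref{prop: from RP to RPS} to $G = \nu_{n}(r)$ gives a natural isomorphism
\[
        \alpha_{\ast} D_{Y_{\RP}} F
    \isomto
        D_{Y_{\RPS}} \alpha_{\ast} F
\]
for every $F \in D_{0}(Y_{\RP}, \Lambda_{n})$. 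In particular $D_{Y_{\RPS}}$ sends $D_{0}(Y_{\RPS}, \Lambda_{n})$ into itself.

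Finally, transporting the involutivity $D_{Y_{\RP}} \compose D_{Y_{\RP}} \simeq \id$ across the equivalence $\alpha_{\ast}$ gives
\[
        D_{Y_{\RPS}} D_{Y_{\RPS}} \alpha_{\ast} F
    \simeq
        \alpha_{\ast} D_{Y_{\RP}} D_{Y_{\RP}} F
    \simeq
        \alpha_{\ast} F,
\]
so $D_{Y_{\RPS}}$ is an auto-equivalence on $D_{0}(Y_{\RPS}, \Lambda_{n})$ with inverse itself. The only nontrivial step is the fully faithfulness of $\alpha_{\ast}$, which reduces via Leray to the already established Proposition \ref{prop: from RP to RPS}; everything else is formal transport of structure.
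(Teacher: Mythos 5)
Your proof is correct and is essentially the argument the paper leaves implicit (there is no written proof of the corollary in the source). You correctly combine Proposition \ref{prop: from RP to RPS} with the Leray isomorphism $R\Gamma(Y_{\RPS}, \alpha_{\ast}(\,\cdot\,)) \cong R\Gamma(Y_{\RP}, \,\cdot\,)$ (valid because $\alpha_{\ast}$ is exact and preserves flask sheaves, as the paper notes) to get full faithfulness, use the definition of $D_{0}(Y_{\RPS}, \Lambda_{n})$ as the image of $\alpha_{\ast}$ for essential surjectivity, specialize $G = \nu_{n}(r)$ (noting $\nu_{n}(r) \in D_{0}(Y_{\RP}, \Lambda_{n})$) to get $\alpha_{\ast} \compose D_{Y_{\RP}} \cong D_{Y_{\RPS}} \compose \alpha_{\ast}$, and transport involutivity across the equivalence, implicitly using that $D_{Y_{\RP}}$ preserves $D_{0}(Y_{\RP}, \Lambda_{n})$ so the commutation can be applied twice.
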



\section{Formulation of the duality}
Let $k$ and $r_{0}$ be as above.
From now on, for a smooth $k$-scheme $Y$,
we will use $Y_{\RPS}$ and not $Y_{\RP}$,
so we write $D(Y, \Lambda_{n})$,
$D_{Y} = R \sheafhom_{(\Lambda_{n})_{Y}}(\;\cdot\;, \nu_{n}(r))$
etc.\ omitting the subscripts $\RPS$.

Let $K$ be a henselian discrete valuation field of characteristic $0$
whose residue field is $k$.
We denote the ring of integers of $K$ by $\Order_{K}$
and its maximal ideal by $\ideal{p}_{K}$.

Let $A$ be a flat $\Order_{K}$-algebra of finite type
and $\Hat{A}$ its $A \ideal{p}_{K}$-adic completion.
Write $A_{K} = A \tensor_{\Order_{K}} K$.
Let $R = A \tensor_{\Order_{K}} k$.
For a flat relatively perfect $R$-algebra $R'$,
we denote its canonical lifting over $\Hat{A}$ by $R'_{\Hat{A}}$ \cite[Definition 1]{Kat82}.
It is characterized as a unique complete $\Hat{A}$-algebra flat over $\Order_{K}$ such that
$R'_{\Hat{A}} \tensor_{\Order_{K}} k$ is isomorphic to $R'$ over $R$.
For any $n \ge 0$, the $A / A \ideal{p}_{K}^{n}$-algebra
$R'_{\Hat{A}} / R'_{\Hat{A}} \ideal{p}_{K}^{n}$ is flat and formally \'etale (\cite[Lemma 1]{Kat82}).
For another flat relatively perfect $R$-algebra $R''$,
we have
	\[
			\Hom_{\Hat{A}}(R'_{\Hat{A}}, R''_{\Hat{A}})
		\isomto
			\Hom_{R}(R', R'').
	\]
Write $R'_{\Hat{A}_{K}} = R'_{\Hat{A}} \tensor_{\Order_{K}} K$.
We have a commutative diagram with cocartesian squares
	\[
		\begin{CD}
				K
			@>>>
				A_{K}
			@>>>
				\Hat{A}_{K}
			@>>>
				R'_{\Hat{A}_{K}}
			\\
			@AAA
			@AAA
			@AAA
			@AAA
			\\
				\Order_{K}
			@>>>
				A
			@>>>
				\Hat{A}
			@>>>
				R'_{\Hat{A}}
			\\
			@VVV
			@VVV
			@VVV
			@VVV
			\\
				k
			@>>>
				R
			@=
				R
			@>>>
				R'.
		\end{CD}
	\]

\begin{Cor} \label{cor: relatively perfect schemes over X and Y}
	Under the above setting,
	let $A'$ be an $A$-algebra flat over $\Order_{K}$
	such that $A' \tensor_{\Order_{K}} k$ is flat relatively perfect over $R$.
	Then the $A' \ideal{p}_{K}$-adic completion $\Hat{A'}$ of $A'$ gives
	the canonical lifting of $A' \tensor_{\Order_{K}} k$ over $\Hat{A}$.
	The maps
		\[
				\Hom_{A}(A', R'_{\Hat{A}})
			\to
				\Hom_{\Hat{A}}(\Hat{A}', R'_{\Hat{A}})
			\to
				\Hom_{R}(A' \tensor_{\Order_{K}} k, R')
		\]
	are both bijective.
	In particular, a right adjoint of $A' \mapsto A' \tensor_{\Order_{K}} k$
	is given by $R' \mapsto R'_{\Hat{A}}$.
\end{Cor}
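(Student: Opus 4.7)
The plan is to prove the first assertion — that $\Hat{A'}$ is the canonical lifting of $A' \tensor_{\Order_{K}} k$ over $\Hat{A}$ — and then deduce the other two from it using the universal properties recalled in the setup.

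To establish the first assertion, one verifies the three characterizing properties of a canonical lifting: (a) $\Hat{A'}$ is a complete $\Hat{A}$-algebra, (b) $\Hat{A'}$ is flat over $\Order_{K}$, and (c) $\Hat{A'} \tensor_{\Order_{K}} k$ is $A' \tensor_{\Order_{K}} k$ as an $R$-algebra. Property (a) is immediate: $\Hat{A'}$ is $\ideal{p}_{K}$-adically complete by construction, and the structure map $A \to A'$ extends by continuity to $\Hat{A} \to \Hat{A'}$. Property (c) is the standard identification $\Hat{A'} / \ideal{p}_{K} \Hat{A'} = A' / \ideal{p}_{K} A'$. For property (b), fixing a uniformizer $\pi$ of $\Order_{K}$, the task reduces to showing that $\pi$ is a non-zero-divisor in $\Hat{A'} = \varprojlim A' / \pi^{n} A'$: if $\pi x = 0$ with $x = (x_{n})$ and $\tilde{x}_{n} \in A'$ is a lift of $x_{n}$, then $\pi \tilde{x}_{n} \in \pi^{n} A'$ forces $\tilde{x}_{n} \in \pi^{n-1} A'$ by the $\pi$-torsion-freeness of $A'$, and compatibility of the $x_{n}$ under $A' / \pi^{n+1} A' \to A' / \pi^{n} A'$ then forces $x_{n} = 0$ for every $n$.

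For the second assertion, the first map $\Hom_{A}(A', R'_{\Hat{A}}) \to \Hom_{\Hat{A}}(\Hat{A'}, R'_{\Hat{A}})$ is bijective by the universal property of $\ideal{p}_{K}$-adic completion applied to the complete target $R'_{\Hat{A}}$: any $A$-algebra map $A' \to R'_{\Hat{A}}$ is automatically $\pi$-adically continuous and extends uniquely to $\Hat{A'}$, and the extension is automatically $\Hat{A}$-linear since the two continuous extensions $\Hat{A} \rightrightarrows R'_{\Hat{A}}$ must agree. The second map $\Hom_{\Hat{A}}(\Hat{A'}, R'_{\Hat{A}}) \to \Hom_{R}(A' \tensor_{\Order_{K}} k, R')$ is bijective by the universal property of canonical liftings $\Hom_{\Hat{A}}(R'_{\Hat{A}}, R''_{\Hat{A}}) \isomto \Hom_{R}(R', R'')$ recalled in the setup, applied with $R''$ the given $R'$ and with $R' = A' \tensor_{\Order_{K}} k$, whose canonical lifting has been identified with $\Hat{A'}$ via the first assertion. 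The third assertion — the adjunction — is then the composite of these two bijections. The only non-trivial point is the flatness check in (b), which is short and standard; I do not anticipate a real obstacle.
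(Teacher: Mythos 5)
Your proposal is correct and takes essentially the same approach as the paper, which simply says the corollary ``follows from the above characterization of canonical liftings''; you have spelled out the three-property verification (completeness, $\Order_K$-flatness via $\pi$-torsion-freeness of the completion, and reduction mod $\ideal{p}_K$) and the two universal-property steps that the paper leaves implicit.
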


\begin{proof}
	This follows from the above characterization of canonical liftings.
\end{proof}

Let $X$ be a smooth $\Order_{K}$-scheme.
Let $U$ and $Y$ be its generic and special fibers, respectively.
Denote the natural inclusion morphisms by
	\[
			U
		\stackrel{j}{\into}
			X
		\stackrel{i}{\hookleftarrow}
			Y.
	\]
If $X = \Spec A$ is affine and $Y = \Spec R$,
then for an affine relatively perfectly smooth $Y$-scheme $Y' = \Spec R'$,
we denote $Y'_{\Hat{X}} = \Spec R'_{\Hat{A}}$
and
	$
			Y'_{\Hat{U}}
		=
			Y'_{\Hat{X}} \times_{\Order_{K}} K
		=
			\Spec R'_{\Hat{A}_{K}}
	$.
For a general smooth $X$,
let $X_{\RPS}$ be the category of $X$-schemes $X'$ flat over $\Order_{K}$
whose special fibers $X' \times_{X} Y$ are relatively perfectly smooth over $Y$.
Morphisms are $X$-scheme morphisms.
Endow $X_{\RPS}$ with the \'etale topology.
Let $\Tilde{X}_{\RPS}$ be the category of sheaves of sets on $X_{\RPS}$
and $\Tilde{Y}_{\RPS}$ similarly.
The reduction functor $X_{\RPS} \to Y_{\RPS}$, $X' \mapsto X' \times_{X} Y$,
defines a morphism of topologies.
It induces a pushforward functor $i_{\ast} \colon \Tilde{Y}_{\RPS} \to \Tilde{X}_{\RPS}$,
which has a left adjoint $i^{\ast} \colon \Tilde{X}_{\RPS} \to \Tilde{Y}_{\RPS}$.

\begin{Prop}
	The functor $i^{\ast}$ is exact.
	If $X$ is affine, then $i^{\ast} F$ for a sheaf $F$ on $X_{\RPS}$
	is given by the sheafification of the presheaf
	$Y' \mapsto \Gamma(Y'_{\Hat{X}}, F)$,
	where $Y'$ runs over affine relatively perfectly smooth $Y$-schemes.
\end{Prop}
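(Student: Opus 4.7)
The plan is to extract a categorical left adjoint to the reduction functor $u \colon X_{\RPS} \to Y_{\RPS}$, $X' \mapsto X' \times_{X} Y$, from Corollary \ref{cor: relatively perfect schemes over X and Y} in the affine case, promote that adjunction to the level of presheaves, and then sheafify. Concretely, when $X = \Spec A$, the corollary furnishes a functor $v \colon Y_{\RPS} \to X_{\RPS}$, $Y' = \Spec R' \mapsto Y'_{\Hat{X}} = \Spec R'_{\Hat{A}}$, together with a natural bijection
\[
    \Hom_{X_{\RPS}}\bigl(v(Y'), X'\bigr)
    \cong
    \Hom_{Y_{\RPS}}\bigl(Y', u(X')\bigr).
\]
Applying $\Spec$ reverses the arrows of the adjunction on algebras stated in the corollary, so the canonical lifting is a \emph{left} adjoint to the reduction functor at the scheme level, i.e.\ $v \dashv u$.

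Next, I would transport this adjunction to the presheaf category. Letting $v^{\ast}$ and $u^{\ast}$ denote the precomposition-of-presheaves functors induced by $v$ and $u$, the unit and counit of $v \dashv u$ produce an adjunction $v^{\ast} \dashv u^{\ast}$ by a standard naturality argument. Since the presheaf pushforward underlying $i_{\ast}$ is precisely $u^{\ast}$, its presheaf-level left adjoint must coincide with $v^{\ast}$, and hence $i^{\ast} F$ is the sheafification of the presheaf
\[
    Y' \mapsto F(v(Y')) = \Gamma(Y'_{\Hat{X}}, F).
\]
This is the explicit formula stated in the proposition.

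Exactness then follows immediately in the affine case, since both steps in the recipe are exact: evaluation of a presheaf at a fixed object is exact, and sheafification of sheaves of $\Lambda_n$-modules (or abelian groups) is exact. For a general smooth $X$ I would reduce to the affine case by choosing an affine Zariski cover $\{X_{\alpha}\}$ of $X$, noting that the formations of $i_{\ast}$ and $i^{\ast}$ are compatible with restriction to the localizations $X_{\RPS}/X_{\alpha}$ and $Y_{\RPS}/Y_{\alpha}$ (where $Y_{\alpha} = X_{\alpha} \times_{X} Y$), and deducing exactness on all of $Y_{\RPS}$ from exactness on each covering piece.

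The main point that needs careful handling is the direction-of-arrows bookkeeping when crossing between algebras and schemes: the corollary is naturally phrased at the algebra level, where the canonical lifting appears as a \emph{right} adjoint to base change, whereas the correct statement at the scheme level is that it becomes a \emph{left} adjoint of $u$. Once that is settled and one identifies the presheaf-level left adjoint of $i_{\ast}$ with $v^{\ast}$, both halves of the proposition are essentially formal consequences of the adjunction, pointwise exactness, and sheafification.
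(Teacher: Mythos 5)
Your proposal is correct and reproduces the paper's (very terse) argument: Corollary \ref{cor: relatively perfect schemes over X and Y} yields the scheme-level adjunction $v \dashv u$ with $v(Y') = Y'_{\Hat X}$, which identifies the presheaf-level left adjoint of $i_*$ with precomposition by $v$, and sheafification then gives the stated formula, with the non-affine $X$ handled by an affine Zariski cover. One small caution on your exactness reasoning: the first ``step'' you invoke (restricting a \emph{sheaf} $F$ along $v$) is only \emph{left} exact, since the inclusion of sheaves into presheaves does not preserve cokernels; left exactness of $i^*$ follows as you say, but right exactness is most cleanly supplied by the fact that $i^*$ is a left adjoint (or, if one prefers the explicit description, by noting that \'etale covers of $Y'_{\Hat X}$ are induced from \'etale covers of $Y'$ because $R'_{\Hat A}$ is henselian along $R'$).
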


\begin{proof}
	The second statement follows from
	Corollary \ref{cor: relatively perfect schemes over X and Y}.
	This description of $i^{\ast}$ shows that $i^{\ast}$ is exact when $X$ is affine.
	The general case follows.
\end{proof}

The above proposition shows that
the morphism $i \colon Y \into X$ induces a morphism of sites $Y_{\RPS} \to X_{\RPS}$.
Let $U_{\Et}$ be the category of $U$-schemes
endowed with the \'etale topology.
Denote $D(X, \Lambda_{n}) = D(X_{\RPS}, \Lambda_{n})$ and
$D(U, \Lambda_{n}) = D(U_{\Et}, \Lambda_{n})$.
Then we have morphisms of sites
	\begin{equation} \label{eq: diagram of generic and special fibers}
			U_{\Et}
		\stackrel{j}{\to}
			X_{\RPS}
		\stackrel{i}{\gets}
			Y_{\RPS}.
	\end{equation}
We consider the functor
	\[
			R \Psi
		=
			i^{\ast} R j_{\ast}
		\colon
			D(U, \Lambda_{n})
		\to
			D(Y, \Lambda_{n}).
	\]
We denote $R^{m} \Psi = i^{\ast} R^{m} j_{\ast}$ for $m \ge 0$.

\begin{Cor} \label{cor: nearby cycles as cohomology of canonical lifts}
	The functor $R \Psi$ is the right derived functor of $R^{0} \Psi$.
	If $X$ is affine, then $R^{m} \Psi F$
	for $F \in M(U_{\Et}, \Lambda)$ and $m \ge 0$
	is given by the sheafification of the presheaf
	$Y' \mapsto H^{m}(Y'_{\Hat{U}}, F)$,
	where $Y'$ runs over affine relatively perfectly smooth $Y$-schemes.
\end{Cor}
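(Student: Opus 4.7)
My plan is to deduce both assertions from the preceding proposition (exactness of $i^{\ast}$ and its explicit description via canonical lifts), combined with the standard description of $R j_{\ast}$ via étale cohomology of generic fibers.

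For the first assertion, exactness of $i^{\ast}$ trivialises the Grothendieck composition-of-derived-functors formula: every sheaf is $i^{\ast}$-acyclic, so
\[
	R(i^{\ast} \compose j_{\ast}) = i^{\ast} \compose R j_{\ast} = R\Psi,
\]
and taking $H^{0}$ recovers $R^{0}\Psi = i^{\ast} j_{\ast}$. Hence $R\Psi$ is the right derived functor of $R^{0}\Psi$.

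For the second assertion, assume $X$ is affine. Exactness of $i^{\ast}$ gives $R^{m}\Psi F = i^{\ast} R^{m} j_{\ast} F$, and the preceding proposition expresses this as the sheafification on $Y_{\RPS}$ of the presheaf $Y' \mapsto (R^{m} j_{\ast} F)(Y'_{\Hat{X}})$. Since $R^{m} j_{\ast} F$ is itself the sheafification on $X_{\RPS}$ of the presheaf $P \colon X' \mapsto H^{m}(X' \times_{X} U, F)$, and since $P(Y'_{\Hat{X}}) = H^{m}(Y'_{\Hat{U}}, F)$ directly by definition of $Y'_{\Hat{U}}$, the task reduces to showing that the natural map
\[
	P_{1}(Y') := H^{m}(Y'_{\Hat{U}}, F) \;\longrightarrow\; P_{2}(Y') := P^{\#}(Y'_{\Hat{X}}),
\]
induced by the presheaf-to-sheaf morphism on $X_{\RPS}$, becomes an isomorphism after sheafification on $Y_{\RPS}$.

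The key step is a cofinality-of-covers argument. The canonical-lifting functor $Y' \mapsto Y'_{\Hat{X}}$ sends étale covers in $Y_{\RPS}$ to étale covers in $X_{\RPS}$: canonical lifts of étale morphisms are étale, and joint surjectivity lifts from the special fiber to the full scheme by $\ideal{p}_{K}$-adic completeness. Conversely, given an étale cover $\{X'_{i} \to Y'_{\Hat{X}}\}$ in $X_{\RPS}$, setting $Y''_{i} := X'_{i} \times_{X} Y$ yields an étale cover $\{Y''_{i} \to Y'\}$ in $Y_{\RPS}$, and Corollary \ref{cor: relatively perfect schemes over X and Y} applied to the identity $Y''_{i} \to Y''_{i}$ produces a canonical $Y'_{\Hat{X}}$-morphism $Y''_{i,\Hat{X}} \to X'_{i}$. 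Hence the lifted covers $\{Y''_{i,\Hat{X}} \to Y'_{\Hat{X}}\}$ are cofinal among étale covers of $Y'_{\Hat{X}}$ in $X_{\RPS}$, and a standard Čech argument shows that $P_{1}$ and $P_{2}$ have the same sheafification on $Y_{\RPS}$. I expect this cofinality of covers and its interaction with Čech refinement to be the main obstacle; once it is in hand, both parts follow formally.
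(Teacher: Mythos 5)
Your treatment of the first assertion is correct and is exactly what the paper has in mind: exactness of $i^{\ast}$ degenerates the Grothendieck spectral sequence for $i^{\ast}\compose j_{\ast}$, so $R\Psi = i^{\ast}\compose Rj_{\ast} = R(i^{\ast}j_{\ast}) = R(R^{0}\Psi)$.

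For the second assertion there is a genuine false step. You claim that canonical lifts of \'etale morphisms are \'etale. They are not: $R'_{\Hat{A}} \to R''_{\Hat{A}}$ is a $\ideal{p}_{K}$-adic completion and is therefore not locally of finite presentation in general. For instance, the lift of the \'etale localization $R' \to R'[1/f]$ is $R'_{\Hat{A}} \to \widehat{R'_{\Hat{A}}[1/\tilde{f}]}$, which is formally \'etale (this is what \cite[Lemma 1]{Kat82} actually gives) but usually not of finite type. So the ``forward direction'' of your cofinality claim, and with it the assertion that $\{Y''_{i,\Hat{X}} \to Y'_{\Hat{X}}\}$ is an \'etale cover, is wrong. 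Fortunately that direction is not needed: your ``conversely'' paragraph alone already shows that any section of $P_{2}$ over $Y'$ is, after pulling back along the \'etale cover $\{Y''_{i} \to Y'\}$ obtained by reduction, hit from $P_{1}(Y''_{i})$ via the comparison map $Y''_{i,\Hat{X}} \to X'_{i}$, and likewise that any section of $P_{1}$ dying in $P_{2}$ dies after such a pullback. That is exactly what is required to identify the sheafifications, so your argument survives once the false lemma is deleted and the conclusion is rephrased without invoking cofinality of lifted covers.

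It is worth noting the cleaner route which the paper's ``this follows from the previous proposition'' most likely intends: take an injective resolution $F \to I^{\bullet}$ on $U_{\Et}$. Then $R\Psi F = i^{\ast} j_{\ast} I^{\bullet}$ since $i^{\ast}$ is exact, and by the previous proposition $i^{\ast} j_{\ast} I^{q}$ is the sheafification of $Y' \mapsto \Gamma(Y'_{\Hat{X}}, j_{\ast} I^{q}) = \Gamma(Y'_{\Hat{U}}, I^{q})$. Since sheafification is exact, $R^{m}\Psi F$ is the sheafification of $Y' \mapsto H^{m}\Gamma(Y'_{\Hat{U}}, I^{\bullet}) = H^{m}(Y'_{\Hat{U}}, F)$, the last equality because injectives on $U_{\Et}$ restrict to acyclic sheaves on the localization at $Y'_{\Hat{U}}$. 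This avoids $R^{m}j_{\ast}F$ and its sheafification altogether, and hence avoids the cover-comparison issue entirely.
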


\begin{proof}
	This follows from the previous proposition.
\end{proof}

We have canonical morphisms
$R j_{\ast} F \tensor^{L} R j_{\ast} G \to R j_{\ast} (F \tensor^{L} G)$
in $D(X, \Lambda_{n})$ and hence
	\[
			R \Psi F \tensor^{L} R \Psi G
		\to
			R \Psi(F \tensor^{L} G)
	\]
in $D(Y, \Lambda_{n})$ functorial in $F, G \in D(U, \Lambda_{n})$.
Hence if we have a morphism $F \tensor^{L} G \to H$ in $D(U, \Lambda_{n})$,
then we have a canonical morphism
	\[
			R \Psi F \tensor^{L} R \Psi G
		\to
			R \Psi H.
	\]
For any integer $s$, we denote the $s$-th Tate twist of $\Lambda_{n}$ over $U$ by $\Lambda_{n} (s)$.
The following is the main theorem of this paper.

\begin{Thm} \label{thm: main}
	Let $X$ be a smooth $\Order_{K}$-scheme of relative dimension $d$.
	Let $U$ and $Y$ be its generic and special fibers, respectively.
	Set $r = r_{0} + d$.
	Let $s, t$ be integers with $s + t = r + 1$.
	\begin{enumerate}
		\item \label{item: main thm, trace morphism}
			There exists a canonical trace morphism
				\[
						\Tr
					\colon
						R^{r + 1} \Psi \Lambda_{n}(r + 1)
					\to
						\nu_{n}(r)
				\]
			of sheaves on $Y_{\RPS}$.
		\item \label{item: main thm, cohom dimension}
			The object $R \Psi \Lambda_{n}(s)$ is in $D_{0}(Y, \Lambda_{n})$
			and concentrated in degrees $[0, r + 1]$.
		\item \label{item: main thm, duality}
			The composite morphism
				\[
						R \Psi \Lambda_{n}(s)
					\tensor^{L}
						R \Psi \Lambda_{n}(t)
					\to
						R \Psi \Lambda_{n}(r + 1)
					\stackrel{\Tr}{\to}
						\nu_{n}(r)[- r - 1]
				\]
			induces a perfect duality between
			$R \Psi \Lambda_{n}(s)$ and $R \Psi \Lambda_{n}(t)$
			in $D_{0}(Y, \Lambda_{n})$
			via the dualizing functor $D_{Y}[- r - 1]$.
	\end{enumerate}
\end{Thm}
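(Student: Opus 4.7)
The plan is to reduce the mixed-characteristic duality to the characteristic-$p$ duality on $Y$ recalled above, via the Bloch--Kato description of $p$-adic nearby cycles from \cite{BK86}. For smooth $X$ over $\Order_{K}$, BK86 equips $R^{q} \Psi \Lambda_{n}(s)$ with a finite decreasing filtration $U^{\bullet}$ (refining the symbol filtration coming from the unit filtration on $\Order_{K}^{\times}$) whose bottom graded pieces are expressed in terms of logarithmic differentials $\nu_{n}(\cdot)$ on $Y$, and whose higher graded pieces are built from sheaves of differential forms such as $\Omega_{Y}^{s-1}/B_{n}$ and $\Omega_{Y}^{s}$ modulo appropriate subquotients. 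All these graded pieces are relative perfections of coherent sheaves locally free of finite rank on $Y$, hence lie in $D_{0}(Y, \Lambda_{n})$; moreover BK86 shows $R^{q} \Psi \Lambda_{n}(s) = 0$ for $q > r+1$. This yields part (\ref{item: main thm, cohom dimension}) immediately.

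For part (\ref{item: main thm, trace morphism}), I would construct the trace from the top-degree piece of the BK86 filtration. In degree $q = r+1$ the filtration collapses: the only nontrivial graded piece is $\nu_{n}(r)$, realized as the composition of the Galois symbol with the boundary map associated to a uniformizer of $K$. One defines $\Tr$ as this canonical identification $R^{r+1} \Psi \Lambda_{n}(r+1) \isomto \nu_{n}(r)$; independence of the uniformizer and functoriality in $X$ are verified by a local symbol calculation.

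For part (\ref{item: main thm, duality}), the key compatibility is that the cup product respects the symbol filtrations: $U^{e_{1}} R \Psi \Lambda_{n}(s) \tensor^{L} U^{e_{2}} R \Psi \Lambda_{n}(t) \to U^{e_{1}+e_{2}} R \Psi \Lambda_{n}(r+1)$. Since the trace factors through $\gr^{0}$ in the top degree, only ``complementary'' bidegrees of the associated graded contribute, and the pairing factors through morphisms $\gr^{e} R \Psi \Lambda_{n}(s) \tensor^{L} \gr^{e'} R \Psi \Lambda_{n}(t) \to \nu_{n}(r)[-r-1]$ for appropriate matched pairs $(e, e')$. I would then argue by induction on the filtration length, using the five lemma in $D_{0}(Y, \Lambda_{n})$, to reduce the perfectness of the duality of filtered objects to the perfectness of the pairing on each graded piece. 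On the bottom pieces this reduces to the self-duality $\nu_{n}(a) \tensor^{L} \nu_{n}(r-a) \to \nu_{n}(r)$ already supplied by Part I. On higher pieces it reduces to an explicit wedge-product pairing of differential-form sheaves on $Y$, whose perfectness should follow from Grothendieck--Serre duality for coherent sheaves on the smooth $k$-scheme $Y$, reinterpreted on the site $Y_{\RPS}$ via Proposition \ref{prop: from RP to RPS}.

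The main obstacle will be part (\ref{item: main thm, duality}) at the higher filtration levels. Verifying that each $\gr^{e}$-pairing is a perfect duality in $D_{0}(Y, \Lambda_{n})$, and ensuring that the filtration extension data are compatible enough for the duality of filtered objects to actually assemble from the duality of graded pieces, requires delicate manipulations with the structure maps of the BK86 complex: the interplay among $\Omega_{Y}^{\bullet}$, the subsheaves $Z_{n} \Omega_{Y}^{\bullet}$ and $B_{n} \Omega_{Y}^{\bullet}$, the Cartier operator, and the dlog image sheaves $\nu_{n}(\cdot)$. I expect the bulk of the remaining work to consist of these explicit symbol computations and the verification that the filtrations on $R \Psi \Lambda_{n}(s)$ and on $R \Psi \Lambda_{n}(t)$ are mutually dual under the cup-product-then-trace pairing.
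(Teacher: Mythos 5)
Your overall strategy---reduce to the graded pieces of the Bloch--Kato filtration, identify the $\gr^{0}$ pairing with the $\nu$-pairing from Part I, and treat the higher pieces as a coherent duality of differential forms---is the right one, and your treatment of part~\eqref{item: main thm, cohom dimension} is essentially correct. But two of your steps contain genuine errors.

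First, the trace morphism in part~\eqref{item: main thm, trace morphism} is \emph{not} an isomorphism, and the filtration does \emph{not} collapse in degree $q=r+1$. The morphism $R^{r+1}\Psi\Lambda_{n}(r+1)\to\nu_{n}(r)$ is the boundary map induced by symbols and $\pi$; it factors through $\gr^{0}R^{r+1}\Psi\Lambda_{n}(r+1)\cong\nu_{n}(r+1)\oplus\nu_{n}(r)=\nu_{n}(r)$ (using $\Omega_{Y}^{r+1}=0$), but its kernel $U^{1}R^{r+1}\Psi\Lambda_{n}(r+1)$ is a nonzero successive extension of sheaves such as $\Omega_{Y}^{r}$. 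Already for $X=\Spec\Order_{K}$, $k$ algebraically closed, $q=r+1=1$, the target is $\Z/p^{n}\Z$ whereas the source is $\Hat{K}^{\times}/(\Hat{K}^{\times})^{p^{n}}$; the trace is the surjection killing the unit part, not an identification. What does collapse is $\nu_{n}(r+1)$, not the unit filtration. So $\Tr$ must be defined as a (surjective) boundary map, with a separate verification of independence of $\pi$.

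Second, your proposed reduction of part~\eqref{item: main thm, duality} to a pairing of complementary graded pieces in $Y_{\RPS}$ cannot work as written. The cup product does send $U^{l}\times U^{m}$ into $U^{l+m}$, but $U^{e'}=0$ on $Y_{\RPS}$ (with $e'=pe/(p-1)$), so the naive pairing between the unit parts of the filtrations is zero. The perfect duality on the unit part lives in degree shifted by one: it is a pairing $U^{1}H^{s}\mathcal{E}\tensor^{L}U^{1}H^{t+1}\mathcal{E}\to\nu(r)[1]$, reflecting the fact that for a locally free coherent sheaf $M$ on $Y$ one has $\sheafhom_{Y_{\RPS}}(M,\nu(r))=0$ while $\sheafext^{1}_{Y_{\RPS}}(M,\nu(r))$ is what carries the duality (this is \cite[Theorem 3.2 (ii)]{Kat86}). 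To make the graded-piece comparison precise one must change topology: pass to the Zariski site $Y_{\RPSZ}$ via $\varepsilon\colon Y_{\RPS}\to Y_{\RPSZ}$, where $R^{1}\varepsilon_{\ast}\nu(r)=\xi(r)=\operatorname{coker}(C-1\colon\Omega_{Y}^{r}\to\Omega_{Y}^{r})$ is nontrivial and the filtration on $H^{q}(R\varepsilon_{\ast}\mathcal{E})$ acquires an extra nonzero layer $U^{e'}$ valued in $\xi$. On $Y_{\RPSZ}$ the pairing between complementary graded pieces with values in $\xi(r)$ is perfect, and Proposition~\ref{prop: etale duality pushes to Zariski duality} transports this back to $Y_{\RPS}$. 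Your five-lemma/induction scheme is then workable, but only after this change of site, and only after establishing (as in Proposition~\ref{prop: splitting pairing to unit part and gr zero part}) that the morphisms of distinguished triangles splitting the pairing into $\gr^{0}$ and $U^{1}$ parts exist and are unique, which requires the vanishing $\sheafhom_{Y_{\RPS}}(\text{locally free},\nu(r))=0$.

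A smaller point: you work directly with $\Lambda_{n}$ coefficients, but the explicit structure of the filtration (Proposition~\ref{prop: calculation of graded pieces}) is available cleanly only for $n=1$; the $n>1$ case should be handled by induction on $n$ using the triangle $\Lambda_{n-1}(s)\to\Lambda_{n}(s)\to\Lambda_{1}(s)$ together with the compatibility of $\Tr_{m}$ and $\Tr_{n}$ under $\formal{p^{n-m}}$ (Proposition~\ref{prop: trace morphism and formal multiplication by p}). Also, $\gr^{0}\cong\nu(q)\oplus\nu(q-1)$ is an object of $D_{0}$ but not itself the relative perfection of a locally free sheaf, so the blanket assertion about all graded pieces should be weakened accordingly.
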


Theorem \ref{thm: mainthm in Introduction} is a consequence of this theorem.
We prove this theorem in the rest of the paper.


\section{Ind-smooth approximations of canonical liftings}

We continue working with the situation \eqref{eq: diagram of generic and special fibers}.
By Corollary \ref{cor: nearby cycles as cohomology of canonical lifts},
to describe the functor $R \Psi$,
we need to know the \'etale cohomology of the canonical liftings $Y'_{\Hat{U}}$.
For this, we use the following approximation method.

\begin{Prop} \label{prop: ind smooth approximation}
	Assume that $X = \Spec A$ is affine and $Y = \Spec R$ has a $p$-base.
	Let $\ideal{q} \subset R$ be a prime ideal and $\ideal{p} \subset A$ its inverse image.
	Let $R'$ be the local (resp.\ henselian local, resp.\ strict henselian local) ring
	of a relatively perfectly smooth $R$-algebra at some prime ideal containing $\ideal{q}$.
	
	Then there exists a local (resp.\ henselian local, resp.\ strict henselian local) $A_{\ideal{p}}$-algebra $A'$
	that can be written as a filtered direct limit of smooth $A$-algebras
	such that $A' \tensor_{\Order_{K}} k \cong R'$ as $R$-algebras
	and the pair $(A', A' \mathfrak{p}_{K})$ is henselian.
	The $A' \ideal{p}_{K}$-adic completion of $A'$
	is isomorphic to $R_{\Hat{A}}'$ as an $A$-algebra.
\end{Prop}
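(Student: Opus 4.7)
The plan is to lift a smooth model of $R'$ to $A$ by standard smooth deformation theory, and then to realize the full ring $R'$ on the $A$ side as a filtered colimit of smooth algebras followed by an appropriate localization, henselization, or strict henselization.

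First I reduce to the case $R' = (T^{\RP})_{\ideal{q}'}$ (or its henselization or strict henselization at the prime above $\ideal{q}$) where $T$ is a smooth $R$-algebra of finite type and $\ideal{q}' \subset T^{\RP}$ is a prime lying over $\ideal{q}$; this is possible because, by definition, a relatively perfectly smooth $R$-algebra is Zariski locally the relative perfection of a smooth $R$-algebra. Using the $p$-base of $R$, one then describes $T^{\RP}$ as a filtered colimit of smooth $R$-algebras (from the structure theory of relatively perfect schemes in \cite[\S 1]{Kat86}): the iterated relative Frobenius twists of $T$ are untwisted via $p$-th-root extractions along the $p$-base of $R$, and each step stays smooth over $R$. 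This yields a presentation $T^{\RP} = \varinjlim_{\lambda} T_{\lambda}$ with every $T_{\lambda}$ smooth over $R$.

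Next, lift this filtered system to smooth $A$-algebras. Since each $T_{\lambda}$ is smooth over $R = A / A \ideal{p}_{K}$, standard smooth deformation theory (using lifts of the $p$-base of $R$ to elements of $A$) produces smooth $A$-algebras $B_{\lambda}$ with $B_{\lambda} \tensor_{\Order_{K}} k \cong T_{\lambda}$ compatibly along the filtered system. Setting $A'' = \varinjlim_{\lambda} B_{\lambda}$ gives an ind-smooth $A$-algebra with $A'' \tensor_{\Order_{K}} k \cong T^{\RP}$. Localizing (resp.\ henselizing, resp.\ strictly henselizing) $A''$ at the prime above $\ideal{q}'$, and, if needed, further henselizing the pair to secure the henselian-pair condition, produces the desired $A'$; these operations preserve the ind-smooth presentation over $A$ because they are themselves filtered colimits of essentially smooth (or essentially étale) $A$-algebras. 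The $A' \ideal{p}_{K}$-adic completion $\Hat{A'}$ is then flat over $\Order_{K}$, complete and separated for the $\ideal{p}_{K}$-adic topology, and has reduction modulo $\ideal{p}_{K}$ equal to $R'$ as an $R$-algebra; by the uniqueness of canonical liftings recalled before Corollary~\ref{cor: relatively perfect schemes over X and Y}, this forces $\Hat{A'} \cong R'_{\Hat{A}}$ as $\Hat{A}$-algebras.

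The principal obstacle is the explicit description of $T^{\RP}$ as an ind-smooth $R$-algebra using the $p$-base of $R$: this is precisely what permits a termwise smooth lift to $A$ and hence a realization of $A'$ as a filtered colimit of smooth $A$-algebras rather than, say, merely a flat or ind-étale one. Once that structural description is in hand, smooth deformation theory, the stability of ind-smoothness under localization and henselization, and the uniqueness of canonical liftings combine cleanly to finish the proof.
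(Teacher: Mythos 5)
Your outline matches the paper's closely: present the relative perfection as a filtered colimit of smooth $R$-algebras, lift the diagram termwise to smooth $A$-algebras, take the ind-smooth colimit, localize/(strictly) henselize, henselize the pair, and conclude via uniqueness of canonical liftings. The gap is in the termwise lifting step, which you attribute to ``standard smooth deformation theory.'' That tool does not apply here: $A \to R = A/A\ideal{p}_{K}$ is neither a nilpotent extension nor a henselian pair, so there is no general obstruction-theoretic reason a smooth affine $R$-algebra $T_{\lambda}$ should lift to a smooth $A$-algebra $B_{\lambda}$, and even less reason that such lifts can be chosen compatibly along the whole filtered system. Your parenthetical ``using lifts of the $p$-base of $R$ to elements of $A$'' points toward the right idea, but it is not a consequence of deformation theory — it is a separate explicit construction that needs a further reduction to make it work.

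What the paper actually does to close this gap is (after shrinking $\Spec R_{1}$) first arrange that the smooth model $R_{1}$ is \'etale over a polynomial ring $R_{2} = R[x_{1}, \dots, x_{m}]$. The Weil-restriction tower $G^{n}(\Spec R_{2})$ then consists of affine spaces over $R$ whose transition maps are given by explicit polynomials in the $p$-base $t_{1}, \dots, t_{r}$; those same polynomials, with the $t_{i}$ lifted to $A$, define a tower of affine spaces over $A$ reducing to the original one. This is how one gets a genuine ind-smooth lift $A_{2}$ of $R_{2}^{\RP}$ over $A$ itself, not just over $\Hat{A}$. The \'etale part $R_{1}^{\RP}$ over $R_{2}^{\RP}$ is then lifted to an \'etale $A_{2}$-algebra, which remains ind-smooth over $A$. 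Without the reduction to a polynomial ring, the Weil restrictions $G^{n}(T)$ of a general smooth $T$ have no such explicit polynomial presentation, and you have no mechanism to produce the $B_{\lambda}$. To repair your proof, you should insert the localization to the \'etale-over-polynomial-ring case and replace the appeal to deformation theory with the explicit polynomial lift for the affine-space tower, followed by lifting the residual \'etale extension.
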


\begin{proof}
	All the relative perfections below are taken over $R$.
	By assumption, there exists a smooth $R$-algebra $R_{1}$
	and a prime ideal $\ideal{q}_{1} \subset R_{1}^{\RP}$
	such that $R'$ is the local (resp.\ henselian local, resp.\ strict henselian local) ring
	of $R_{1}^{\RP}$ at $\ideal{q}_{1}$.
	Taking $\Spec R_{1}$ smaller if necessary,
	we may assume that $R_{1}$ is \'etale over a polynomial ring $R_{2} = R[x_{1}, \dots, x_{m}]$.
	Then $R_{1}^{\RP}$ is \'etale over $R_{2}^{\RP}$.
	
	We show that there is a filtered direct limit of smooth $A$-algebras
	whose reduction $(\;\cdot\;) \tensor_{A} R$ is $R_{2}^{\RP}$.
	The relative perfection $\Spec R_{2}^{\RP}$ is given by the inverse limit of $G^{n}(\Spec R_{2})$ for $n \ge 0$,
	where $G$ is the Weil restriction functor for the absolute Frobenius morphism $\Spec R \to \Spec R$
	(\cite[1.6-1.8]{Kat86}).
	In particular, we only need to treat the case $m = 1$, so $R_{2} = R[x]$.
	Let $t_{1}, \dots, t_{r}$ be a $p$-base of $R$.
	Then $G(\Spec R_{2})$ is the affine $p^{r}$-space over $R$
	with coordinates $x_{i(1) \cdots i(r)}$, $0 \le i(1), \dots, i(r) \le p - 1$,
	and the $R$-morphism $G(\Spec R_{2}) = \Affine^{p^{r}}_{R} \to \Spec R_{2} = \Affine^{1}_{R}$ maps
	$(x_{i(1) \cdots i(r)})$ to $\sum x_{i(1) \cdots i(r)}^{p} t_{1}^{i(1)} \cdots t_{r}^{i(r)}$.
	In terms of rings, this is the $R$-algebra homomorphism
	$R[x] \to R[x_{i(1) \cdots i(r)} \,|\, 0 \le i(1), \dots, i(r) \le p - 1]$
	sending $x$ to $\sum x_{i(1) \cdots i(r)}^{p} t_{1}^{i(1)} \cdots t_{r}^{i(r)}$.
	We take a lifting of this morphism to $A$ by
	$\Affine^{p^{r}}_{A} \to \Affine^{1}_{A}$ mapping
	$(x_{i(1) \cdots i(r)})$ to $\sum x_{i(1) \cdots i(r)}^{p} t_{1}^{i(1)} \cdots t_{r}^{i(r)}$.
	Iterating, we can take a lifting of $G^{n + 1}(\Spec R_{2}) \to G^{n}(\Spec R_{2})$ to $A$
	by $\Affine^{p^{(n + 1) r}}_{A} \to \Affine^{p^{n r}}_{A}$ defined similarly.
	The inverse limit of these liftings for $n \ge 0$ gives
	a desired lifting of $\Spec R_{2}^{\RP}$.
	
	Let $A_{2}$ be such a lifting of $R_{2}^{\RP}$.
	Since $R_{1}^{\RP}$ is \'etale over $R_{2}^{\RP}$,
	we can take an \'etale $A_{2}$-algebra $A_{1}$ whose reduction is $R_{1}^{\RP}$.
	Let $\ideal{p}_{1} \subset A_{1}$ be the inverse image of $\ideal{q}_{1} \subset R_{1}^{\RP}$.
	Consider the local (resp.\ henselian local, resp.\ strict henselian local) ring $A_{1}'$ of $A_{1}$
	at $\ideal{p}_{1}$.
	The henselization of the pair $(A_{1}', A_{1}' \ideal{p}_{K})$ gives
	a desired local $A_{\ideal{p}}$-algebra $A'$.
	We have $\Hat{A}' \cong R_{\Hat{A}}'$
	by Corollary \ref{cor: relatively perfect schemes over X and Y}.
\end{proof}

We call the $A$-algebra $A'$ appearing in this proposition
an \emph{ind-smooth lifting} of $R'$ over $A$.
It is neither unique nor noetherian.

\begin{Prop} \label{prop: ind smooth approximation of cohomology}
	In the situation of Proposition \ref{prop: ind smooth approximation},
	for any torsion \'etale sheaf $F$ on $U_{\et}$ (pulled back to $U_{\Et}$),
	we have
		\[
				R \Gamma(A'_{K}, F)
			\isomto
				R \Gamma(R'_{\Hat{A}_{K}}, F).
		\]
	In the case of strictly henselian  $R'$,
	the isomorphic groups
	$H^{q}(A'_{K}, F) \cong H^{q}(R'_{\Hat{A}_{K}}, F)$
	for any $q$ give the stalk of $R^{q} \Psi F$ at the residue field of $R'$
	(if $F$ is a sheaf of $\Lambda_{n}$-modules).
\end{Prop}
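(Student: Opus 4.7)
My plan hinges on an appropriate version of the Fujiwara--Gabber formal base change theorem: for any henselian pair $(B, J)$ with $J$-adic completion $\Hat{B}$ and any torsion \'etale sheaf $F$, the natural map
	\[
			R \Gamma(\Spec B \setminus V(J), F)
		\isomto
			R \Gamma(\Spec \Hat{B} \setminus V(J \Hat{B}), F)
	\]
is an isomorphism. For the first statement, this applies directly to the pair $(A', A' \ideal{p}_{K})$: by Proposition~\ref{prop: ind smooth approximation} it is henselian and its $\ideal{p}_{K}$-adic completion is $R'_{\Hat{A}}$, so inverting a uniformizer yields the desired isomorphism $R \Gamma(A'_{K}, F) \isomto R \Gamma(R'_{\Hat{A}_{K}}, F)$.

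For the stalk statement, I combine the first part with the sheafification description. By Corollary~\ref{cor: nearby cycles as cohomology of canonical lifts}, the sheaf $R^{q} \Psi F$ is the sheafification of the presheaf $\mathcal{P} \colon Y' \mapsto H^{q}(Y'_{\Hat{U}}, F)$, so its stalk at the geometric point on $Y$ determined by the residue field of the strict henselian $R'$ is the filtered colimit $\operatorname{colim}_{\alpha} H^{q}((R'_{\alpha})_{\Hat{A}_{K}}, F)$ over RPS \'etale neighborhoods $Y'_{\alpha} = \Spec R'_{\alpha}$ of this point. I claim this colimit equals $H^{q}(A'_{K}, F)$, which in turn equals $H^{q}(R'_{\Hat{A}_{K}}, F)$ by the first statement. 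To see the claim, write $A' = \operatorname{colim}_{\beta} A'_{\beta}$ with $A'_{\beta}$ smooth $A$-algebras (Proposition~\ref{prop: ind smooth approximation}). The relative perfections $(B'_{\beta})^{\RP}$ of the reductions $B'_{\beta} = A'_{\beta} \tensor_{\Order_{K}} k$ provide, via \'etale localizations, a cofinal system of RPS \'etale neighborhoods of the chosen point on $R'$, using the adjunction between the inclusion $Y_{\RP} \into (\Sch / Y)$ and the relative perfection functor. Applying the first statement termwise and invoking the passage to limits theorem in \'etale cohomology (SGA 4 VII) rewrites the stalk colimit as $\operatorname{colim}_{\beta} H^{q}((A'_{\beta})_{K}, F) = H^{q}(A'_{K}, F)$, completing the identification.

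The main technical obstacle is two-fold. First, the Fujiwara--Gabber theorem must be applied to the non-noetherian ring $A'$; this is available from Bhatt--Mathew's work on general henselian pairs, or one may reduce to the noetherian case by first passing to limits over smooth stages and then Fujiwara's original noetherian statement applied to the henselization of each stage. Second, establishing the cofinality used in the stalk argument requires unpacking the explicit construction in the proof of Proposition~\ref{prop: ind smooth approximation} to ensure that the RPS \'etale neighborhoods of the chosen point on $R'$ are indeed exhausted by (relative perfections of) reductions of the smooth stages making up $A'$, and that the resulting comparison maps assemble into the passage-to-limits isomorphism above.
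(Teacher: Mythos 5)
Your proof of the first assertion is exactly the paper's: apply the Fujiwara--Gabber formal base change theorem to the henselian pair $(A', A'\ideal{p}_{K})$ constructed in Proposition~\ref{prop: ind smooth approximation}, whose $\ideal{p}_{K}$-adic completion is $R'_{\Hat{A}}$.

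For the stalk assertion, however, your cofinality step has a gap and is also the wrong route. The stalk of $R^{q}\Psi F$ at the geometric point of the relatively perfectly smooth $R$-algebra $T^{\RP}$ (whose strict henselization is $R'$) is the colimit $\operatorname{colim}_{\alpha} H^{q}((R'_{\alpha})_{\Hat{A}_{K}}, F)$ over the RPS \'etale neighborhoods $R'_{\alpha}$, which are \'etale $T^{\RP}$-algebras. Your proposed cofinal system consists of the relative perfections $(B'_{\beta})^{\RP}$ of the reductions of the smooth stages $A'_{\beta}$; these do admit $R$-maps to $R'$ (by the relative-perfection adjunction), but they are not \'etale over $T^{\RP}$, so they are not themselves \'etale neighborhoods of the geometric point, and ``cofinality'' in the category of \'etale neighborhoods does not apply. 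Moreover, there are no natural comparison maps between the canonical lifts $(R'_{\alpha})_{\Hat{A}_{K}}$ and the algebraic generic fibers $(A'_{\beta})_{K}$, so the identification of the two colimits is not a formal cofinality statement. The shorter argument (and, as far as one can tell from the terse phrasing, the intended one) stays entirely with the $R'_{\alpha}$: by passage to limits in \'etale cohomology the stalk colimit equals $H^{q}\bigl((\operatorname{colim}_{\alpha}(R'_{\alpha})_{\Hat{A}})_{K}, F\bigr)$; the pair $(\operatorname{colim}_{\alpha}(R'_{\alpha})_{\Hat{A}}, \ideal{p}_{K})$ is henselian, being a filtered colimit of complete pairs, and its $\ideal{p}_{K}$-adic completion is $R'_{\Hat{A}}$ (each $(R'_{\alpha})_{\Hat{A}}/\ideal{p}_{K}^{n}$ is the unique flat formally \'etale lift of $R'_{\alpha}$ over $A/\ideal{p}_{K}^{n}$, and flatness and formal \'etaleness pass to filtered colimits, so the colimit modulo $\ideal{p}_{K}^{n}$ is $R'_{\Hat{A}}/\ideal{p}_{K}^{n}$); Fujiwara--Gabber then identifies the stalk with $H^{q}(R'_{\Hat{A}_{K}}, F)$, and the first assertion supplies the alternative description via $A'_{K}$. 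You should replace the cofinality step with this passage-to-limits-plus-Fujiwara--Gabber argument.
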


\begin{proof}
	The first assertion follows from Fujiwara-Gabber's formal base change theorem
	(\cite[Expos\'e XX, \S 4.4]{ILO14}, \cite[Corollary 1.18 (2)]{BM18}).
	The second follows from
	Corollary \ref{cor: nearby cycles as cohomology of canonical lifts}.
\end{proof}

Since $A'$ is ind-smooth over $A$ and hence over $\Order_{K}$,
the study of $R \Gamma(A'_{K}, F)$ basically reduces to
Bloch-Kato's study of $p$-primary nearby cycles \cite{BK86}.


\section{Symbol maps and trace morphisms}
Let the notation be as in Theorem \ref{thm: main}.
We fix a prime element $\pi$ of $K$.
In this section, we will prove Theorem \ref{thm: main} \eqref{item: main thm, trace morphism}.
Slightly more generally, we will construct a certain morphism
	\begin{equation} \label{eq: boundary map}
			R^{q} \Psi \Lambda_{n}(q)
		\to
			\nu_{n}(q) \oplus \nu_{n}(q - 1)
	\end{equation}
of sheaves on $Y_{\RPS}$ such that
its composite with the projection onto the factor $\nu_{n}(q - 1)$ does not depend on $\pi$.

We need symbol maps adapted to our setting.
The connecting morphism for the Kummer exact sequence
$0 \to \Lambda_{n}(1) \to \Gm \to \Gm \to 0$ of sheaves on $U_{\Et}$
gives a morphism $i^{\ast} j_{\ast} \Gm \to R^{1} \Psi \Lambda_{n}(1)$ of sheaves on $Y_{\RPS}$.
By cup product, we define a morphism
	\begin{equation} \label{eq: symbol map}
			(i^{\ast} j_{\ast} \Gm)^{\tensor q}
		\to
			R^{q} \Psi \Lambda_{n}(q),
		\quad
			x_{1} \tensor \dots \tensor x_{q}
		\mapsto
			\{x_{1}, \dots, x_{q}\},
	\end{equation}
where $\tensor q$ means the $q$-th tensor power and
the $x_{i}$ are local sections of $i^{\ast} j_{\ast} \Gm$
(i.e.\ invertible elements of $R'_{\Hat{A}_{K}}$
for some relatively perfectly smooth $R$-algebra $R'$,
where $\Spec A$ is an affine open of $X$
and $R = A \tensor_{\Order_{K}} k$),
which we call the \emph{symbol map}.
By composing it with the inclusion $\Gm \into j_{\ast} \Gm$,
we have a morphism from $(i^{\ast} \Gm)^{\tensor q}$ to $R^{q} \Psi \Lambda_{n}(q)$.
The construction of the morphism \eqref{eq: boundary map} is given by the following,
which proves Theorem \ref{thm: main} \eqref{item: main thm, trace morphism}.

\begin{Prop} \label{prop: boundary map}
	The morphism
		\begin{gather*}
					(i^{\ast} \Gm)^{\tensor q} \oplus (i^{\ast} \Gm)^{\tensor q - 1}
				\to
					R^{q} \Psi \Lambda_{n}(q),
			\\
					(x_{1} \tensor \dots \tensor x_{q}, y_{1} \tensor \dots \tensor y_{q - 1})
				\mapsto
					\{x_{1}, \dots, x_{q}\} + \{y_{1}, \dots, y_{q - 1}, \pi\}
		\end{gather*}
	is surjective.
	(Note that the last component of the second symbol is $\pi \in \Gamma(Y, i^{\ast} j_{\ast} \Gm)$,
	which is not in $\Gamma(Y, i^{\ast} \Gm)$.)
	The composite of the reduction map and the dlog map
		\begin{gather*}
					(i^{\ast} \Gm)^{\tensor q} \oplus (i^{\ast} \Gm)^{\tensor q - 1}
				\to
					\Gm^{\tensor q} \oplus \Gm^{\tensor q - 1}
				\to
					\nu_{n}(q) \oplus \nu_{n}(q - 1)
		\end{gather*}
	factors through the quotient $R^{q} \Psi \Lambda_{n}(q)$.
	The obtained morphism
		\[
				R^{q} \Psi \Lambda_{n}(q)
			\to
				\nu_{n}(q) \oplus \nu_{n}(q - 1)
		\]
	followed by the projection onto the factor $\nu_{n}(q - 1)$ does not depend on $\pi$.
\end{Prop}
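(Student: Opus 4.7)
The strategy is to reduce every assertion to a stalkwise computation and then invoke the Bloch--Kato structure theorem for $p$-primary nearby cycles \cite{BK86}. By Proposition \ref{prop: ind smooth approximation of cohomology}, for any strict henselian local $R'$ arising from $Y_{\RPS}$, the stalk of $R^{q} \Psi \Lambda_{n}(q)$ at $R'$ is $H^{q}(A'_{K}, \Lambda_{n}(q))$, where $A'$ is an ind-smooth henselian lifting of $R'$ over $\Order_{K}$. Since $A'$ is a filtered colimit of smooth $\Order_{K}$-algebras, the results of \cite{BK86} apply: $H^{q}(A'_{K}, \Lambda_{n}(q))$ carries a decreasing filtration $U^{0} \supset U^{1} \supset \cdots$ whose graded pieces are described in terms of differentials on $R'$, and the $0$-th piece $\gr^{0}$ is canonically isomorphic, via $\dlog$ on symbols, to $\nu_{n}(q) \oplus \nu_{n}(q - 1)$ at this stalk.

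For surjectivity, we combine the Bloch--Kato generation theorem---which says that $H^{q}(A'_{K}, \Lambda_{n}(q))$ is generated by symbols $\{x_{1}, \ldots, x_{q}\}$ with $x_{i} \in (A'_{K})^{\times}$---with the observation that in the henselian local ring $A'$ with uniformizer $\pi$, each such symbol can be rewritten modulo the Steinberg relations as a $\Z$-linear combination of symbols $\{u_{1}, \ldots, u_{q}\}$ with $u_{i} \in (A')^{\times}$ and symbols $\{u_{1}, \ldots, u_{q - 1}, \pi\}$. Sheafification over $Y_{\RPS}$ then yields the surjectivity claim.

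The factoring assertion will be the main obstacle. We must verify that the composite $(i^{\ast} \Gm)^{\tensor q} \oplus (i^{\ast} \Gm)^{\tensor q - 1} \to \nu_{n}(q) \oplus \nu_{n}(q - 1)$ kills every relation holding in $R^{q} \Psi \Lambda_{n}(q)$. Our plan is to identify this composite, stalkwise, with the canonical Bloch--Kato projection $H^{q}(A'_{K}, \Lambda_{n}(q)) \onto \gr^{0} \cong \nu_{n}(q) \oplus \nu_{n}(q - 1)$, which is well-defined by the construction of the filtration. On symbols, both maps send $\{u_{1}, \ldots, u_{q}\}$ to $(\dlog \bar u_{1} \cdots \dlog \bar u_{q}, 0)$ and $\{u_{1}, \ldots, u_{q - 1}, \pi\}$ to $(0, \dlog \bar u_{1} \cdots \dlog \bar u_{q - 1})$, so they agree, and the required factoring follows after sheafification.

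Independence of the $\nu_{n}(q - 1)$-component from the choice of $\pi$ is then immediate: for a second uniformizer $\pi' = u \pi$ with $u$ a unit, the identity $\{y_{1}, \ldots, y_{q - 1}, \pi'\} = \{y_{1}, \ldots, y_{q - 1}, \pi\} + \{y_{1}, \ldots, y_{q - 1}, u\}$ shows that the two constructions of the map on $R^{q} \Psi \Lambda_{n}(q)$ differ by the contribution of an element in the image of $(i^{\ast} \Gm)^{\tensor q}$, which under $\dlog$ lands only in the $\nu_{n}(q)$-component, leaving the $\nu_{n}(q - 1)$-component unchanged.
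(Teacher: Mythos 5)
Your proof mirrors the paper's overall reduction strategy (localize to stalks; replace the canonical lifting by an ind-smooth lifting via Proposition \ref{prop: ind smooth approximation of cohomology}; conclude by Bloch--Kato), but the surjectivity step contains a genuine gap. The paper simply observes that, after this reduction, the three claims are \emph{exactly} the content of \cite[Theorem (1.4) (i)]{BK86} applied to $A'$ smooth (ind-smooth) over $\Order_K$, and cites it. You instead try to re-derive that surjectivity from two ingredients, neither of which is available in the form you need.

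First, the assertion that $H^{q}(A'_{K}, \Lambda_{n}(q))$ is generated by Milnor symbols $\{x_{1},\dots,x_{q}\}$ with $x_{i}\in(A'_{K})^{\times}$ is not a result of \cite{BK86}; it amounts to a Beilinson--Lichtenbaum-type surjectivity for the ring $A'_{K}$, which is both logically stronger than and distinct from \cite[Theorem (1.4) (i)]{BK86}, and in any case anachronistic to invoke in this context. Second, and more decisively, the rewriting step treats $A'$ as ``the henselian local ring with uniformizer $\pi$'' and claims every symbol over $A'_{K}$ reduces to symbols in $(A')^{\times}$ plus symbols ending in $\pi$. This presupposes $(A'_{K})^{\times} = (A')^{\times}\cdot\pi^{\Z}$, which is false as soon as $X$ has positive relative dimension $d$: then $A'$ is an ind-smooth local $\Order_K$-algebra of dimension $d+1$, $A'_{K}=A'[1/\pi]$ is not local and certainly not a DVR, and $(A'_K)^{\times}$ contains many units not of the form (unit of $A'$)$\times\pi^{n}$. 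So the rewriting argument, which would work for $X=\Spec\Order_K$, does not carry over to the general smooth $X$ that the proposition covers. The fix is simply to delete the ``generation + Steinberg rewriting'' paragraph and cite \cite[Theorem (1.4) (i)]{BK86} directly for all three claims at the stalk level, as the paper does; the remaining parts of your argument (the factoring through $R^q\Psi$ and the $\pi$-independence of the $\nu_n(q-1)$-component) are then subsumed by the same citation, and your explicit unit-substitution check for $\pi$-independence is a correct but redundant sanity check.
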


In this proposition, when $X = \Spec \Order_{K}$, $k$ is separably closed and $q = 1$,
the global section of the above morphism
	$
			i^{\ast} \Gm \oplus \Z
		\to
			R^{1} \Psi \Lambda_{n}(1)
	$
is $\Hat{\Order}_{K}^{\times} \oplus \Z \to \Hat{K}^{\times} / (\Hat{K}^{\times})^{p^{n}}$
given by $(x, n) \mapsto x \pi^{n}$.
This is indeed surjective.

\begin{proof}
	It is enough to check the statements for stalks.
	Hence we may assume that $X = \Spec A$ is affine
	and $Y = \Spec R$ has a $p$-base.
	Let $R'$ be the strict henselian local ring of a relatively perfectly smooth $R$-algebra
	at some prime ideal.
	Let $A'$ be an ind-smooth lifting of $R'$ over $A$ as in the previous section.
	By Proposition \ref{prop: ind smooth approximation of cohomology},
	we are reduced to proving the following:
	the homomorphism
		\begin{gather*}
					(A'^{\times})^{\tensor q} \oplus (A'^{\times})^{\tensor q - 1}
				\to
					H^{q}(A'_{K}, \Lambda_{n}(q))
			\\
					(x_{1} \tensor \dots \tensor x_{q}, y_{1} \tensor \dots \tensor y_{q - 1})
				\mapsto
					\{x_{1}, \dots, x_{q}\} + \{y_{1}, \dots, y_{q - 1}, \pi\}
		\end{gather*}
	is surjective;
	the composite of the reduction map and the dlog map
		\begin{gather*}
					(A'^{\times})^{\tensor q} \oplus (A'^{\times})^{\tensor q - 1}
				\to
					(R'^{\times})^{\tensor q} \oplus (R'^{\times})^{\tensor q - 1}
				\to
					\Gamma(R', \nu_{n}(q)) \oplus \Gamma(R', \nu_{n}(q - 1))
		\end{gather*}
	factors through the quotient $H^{q}(A'_{K}, \Lambda_{n}(q))$;
	and the obtained homomorphism
		\[
				H^{q}(A'_{K}, \Lambda_{n}(q))
			\to
				\Gamma(R', \nu_{n}(q)) \oplus \Gamma(R', \nu_{n}(q - 1))
		\]
	followed by the projection onto the factor $\Gamma(R', \nu_{n}(q - 1))$ does not depend on $\pi$.
	Since $A'$ is ind-smooth over $A$ and hence over $\Order_{K}$,
	these claims are reduced to \cite[Theorem (1.4) (i)]{BK86}.
\end{proof}

For $m \le n$, the endomorphisms of $\Lambda_{n}$ and $\nu_{n}(q)$ given by multiplication by $p^{n - m}$
factor as $\formal{p^{n - m}} \colon \Lambda_{m} \into \Lambda_{n}$
and $\formal{p^{n - m}} \colon \nu_{m}(q) \into \nu_{n}(q)$,
so that we have an exact sequence
$0 \to \nu_{m}(q) \to \nu_{n}(q) \to \nu_{n - m}(q) \to 0$ (\cite[(4.1.8)]{Kat86}).
Later we will use the following.

\begin{Prop} \label{prop: trace morphism and formal multiplication by p}
	We have a commutative diagram
		\[
			\begin{CD}
					R^{q} \Psi \Lambda_{m}(q)
				@>>>
					\nu_{m}(q) \oplus \nu_{m}(q - 1)
				\\
				@VV \formal{p^{n - m}} V
				@V \formal{p^{n - m}} VV
				\\
					R^{q} \Psi \Lambda_{n}(q)
				@>>>
					\nu_{n}(q) \oplus \nu_{n}(q - 1),
			\end{CD}
		\]
	where the horizontal morphisms are given by \eqref{eq: boundary map}
	for $m$ and $n$.
\end{Prop}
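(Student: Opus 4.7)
The plan is to reduce the commutativity of the diagram to a check on symbols, and then to exploit the factorization
$\formal{p^{n - m}} = \formal{p^{n - m}} \compose \rho$
of multiplication by $p^{n - m}$ through the natural reduction
$\rho \colon \Lambda_{n} \onto \Lambda_{m}$.

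First, by Proposition \ref{prop: boundary map}, the symbol map
	\[
			(i^{\ast} \Gm)^{\tensor q} \oplus (i^{\ast} \Gm)^{\tensor q - 1}
		\to
			R^{q} \Psi \Lambda_{\ell}(q)
	\]
is surjective for each $\ell \ge 1$, and the horizontal arrow of the displayed diagram at level $\ell \in \{m, n\}$ is characterized by
$\{x_{1}, \dots, x_{q}\}_{\ell} \mapsto (\dlog\{x_{1}, \dots, x_{q}\}, 0)$
and
$\{y_{1}, \dots, y_{q - 1}, \pi\}_{\ell} \mapsto (0, \dlog\{y_{1}, \dots, y_{q - 1}\})$.
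It therefore suffices to verify the claimed commutativity on such symbols.

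The key identity I would establish is
	\[
			\formal{p^{n - m}} \bigl( \{x_{1}, \dots, x_{q}\}_{m} \bigr)
		=
			p^{n - m} \{x_{1}, \dots, x_{q}\}_{n}
		\text{ in } R^{q} \Psi \Lambda_{n}(q),
	\]
and analogously for the $\{y_{1}, \dots, y_{q - 1}, \pi\}$-symbols.
To prove it, note that $\rho$ is a ring morphism and is therefore compatible with cup products; functoriality of the Kummer connecting morphism gives $\rho \compose \partial_{n} = \partial_{m}$, so taking cups yields $\rho(\{x_{1}, \dots, x_{q}\}_{n}) = \{x_{1}, \dots, x_{q}\}_{m}$. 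Applying $\formal{p^{n - m}}$ to this equation and using the identity $\formal{p^{n - m}} \compose \rho = p^{n - m}$ on $\Lambda_{n}$ then delivers the claim.

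The analogous (and easier) argument on the $\nu$-side uses that the natural projection $W_{n} \Omega_{Y, \log}^{q} \onto W_{m} \Omega_{Y, \log}^{q}$ sends $\dlog_{n}$ to $\dlog_{m}$, together with the identity $\formal{p^{n - m}} \compose \rho = p^{n - m}$ on $\nu_{n}$; this yields
$\formal{p^{n - m}}(\dlog_{m}(x_{1}, \dots, x_{q})) = p^{n - m} \dlog_{n}(x_{1}, \dots, x_{q})$
in $\nu_{n}(q)$.
Combining the two identities, both composites in the diagram send $\{x_{1}, \dots, x_{q}\}_{m}$ to $(p^{n - m} \dlog_{n}(x_{1}, \dots, x_{q}), 0) \in \nu_{n}(q) \oplus \nu_{n}(q - 1)$, and the $\{y, \pi\}$-case is entirely parallel; this proves commutativity. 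The main subtlety---and the reason some argument is needed at all---is that $\formal{p^{n - m}}$ is not a ring map, so its interaction with cup products cannot be handled directly; routing the computation through the ring map $\rho$ is the essential trick.
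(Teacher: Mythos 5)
Your argument is correct and follows essentially the same route as the paper: reduce to symbols via the surjectivity in Proposition \ref{prop: boundary map}, and exploit the factorization $p^{n-m} = \formal{p^{n-m}} \compose \mathrm{can}$ through the canonical reduction (your $\rho$) on both the $R^{q}\Psi$ and $\nu$ columns. The paper packages exactly this as a three-row diagram chase, with "the left two squares are commutative by the construction of the symbol map" playing the role of your $\rho$-compatibility of Kummer symbols; your remark that one must route through $\rho$ because $\formal{p^{n-m}}$ is not a ring map is the (implicit) reason the paper introduces the extra row.
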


\begin{proof}
	Consider the following diagram (commutativity to be discussed soon):
		\[
			\begin{CD}
					(i^{\ast} \Gm)^{\tensor q} \oplus (i^{\ast} \Gm)^{\tensor q - 1}
				@>>>
					R^{q} \Psi \Lambda_{n}(q)
				@>>>
					\nu_{n}(q) \oplus \nu_{n}(q - 1)
				\\
				@|
				@VV \text{can} V
				@V \text{can} VV
				\\
					(i^{\ast} \Gm)^{\tensor q} \oplus (i^{\ast} \Gm)^{\tensor q - 1}
				@>>>
					R^{q} \Psi \Lambda_{m}(q)
				@>>>
					\nu_{m}(q) \oplus \nu_{m}(q - 1).
				\\
				@VV p^{n - m} V
				@VV \formal{p^{n - m}} V
				@V \formal{p^{n - m}} VV
				\\
					(i^{\ast} \Gm)^{\tensor q} \oplus (i^{\ast} \Gm)^{\tensor q - 1}
				@>>>
					R^{q} \Psi \Lambda_{n}(q)
				@>>>
					\nu_{n}(q) \oplus \nu_{n}(q - 1).
			\end{CD}
		\]
	The left three horizontal morphisms (the symbol maps) are all surjective
	by Proposition \ref{prop: boundary map}.
	The left two squares are commutative by the construction of the symbol map.
	The total (or outer) square omitting the central term $R^{q} \Psi \Lambda_{m}(q)$ is commutative
	since $\formal{p^{n - m}} \compose \text{can} = p^{n - m}$.
	From these, the commutativity of the right lower square follows
	by a diagram chase.
\end{proof}


\section{Mod $p$ case I: filtrations and duality for $\gr^{0}$}
Let the notation be as in Theorem \ref{thm: main}.
Within this and the next sections, we will prove Theorem \ref{thm: main} for the case $n = 1$.
We fix a prime element $\pi$ of $K$.
Let $q \ge 0$.
Recall our notation $\Lambda = \Z / p \Z$.
As in \cite[(1.2)]{BK86}, we define a filtration on the sheaf $R^{q} \Psi \Lambda(q)$
using the symbol map \eqref{eq: symbol map} as follows.
For $m \ge 1$, define $U^{m} R^{q} \Psi \Lambda(q)$ to be the subsheaf of $R^{q} \Psi \Lambda(q)$
generated by local sections of the form $\{x_{1}, \dots, x_{q}\}$
such that $x_{1} - 1 \in \pi^{m} i^{\ast} \Ga$.
Let
	\[
			\gr^{m} R^{q} \Psi \Lambda(q)
		=
			\begin{cases}
					R^{q} \Psi \Lambda(q) / U^{1} R^{q} \Psi \Lambda(q)
				&
					\text{if } m = 0,
				\\
					U^{m} R^{q} \Psi \Lambda(q) / U^{m + 1} R^{q} \Psi \Lambda(q)
				&
					\text{if } m \ge 1.
			\end{cases}
	\]
For $m \ge 1$, define a morphism $\rho_{m}$ from the direct sum of
$i^{\ast} \Ga \tensor i^{\ast} \Gm^{\tensor q - 1}$
and $i^{\ast} \Ga \tensor i^{\ast} \Gm^{\tensor q - 2}$
to $U^{m} R^{q} \Psi \Lambda(q)$ by
	\[
			x \tensor y_{1} \tensor \dots \tensor y_{q - 1}
		\mapsto
			\{1 + x \pi^{m}, y_{1}, \dots, y_{q - 1}\}
	\]
and
	\[
			x \tensor y_{1} \tensor \dots \tensor y_{q - 2}
		\mapsto
			\{1 + x \pi^{m}, y_{1}, \dots, y_{q - 2}, \pi\}.
	\]
The reduction map and the dlog map define surjections
	\[
			i^{\ast} \Ga \tensor i^{\ast} \Gm^{\tensor q - 1}
		\onto
			\Ga \tensor \Gm^{\tensor q - 1}
		\onto
			\Omega_{Y}^{q - 1}
	\]
and similar surjections with $q - 1$ replaced by $q - 2$.
Let $e$ be the absolute ramification index of $K$
and set $e' = p e / (p - 1)$.

\begin{Prop} \label{prop: calculation of graded pieces} \mbox{}
	\begin{enumerate} \setcounter{enumi}{-1}
		\item \label{item: morphism from differential to Gr}
			For $m \ge 1$, the morphism $\rho_{m}$ factor through
				\[
						\Omega_{Y}^{q - 1} \oplus \Omega_{Y}^{q - 2}
					\to
						\gr^{m} R^{q} \Psi \Lambda(q).
				\]
		\item
			The morphism \eqref{eq: boundary map} for $n = 1$ is surjective
			and induces an isomorphism
				\[
						\gr^{0} R^{q} \Psi \Lambda(q)
					\cong
						\nu(q) \oplus \nu(q - 1).
				\]
		\item
			If $1 \le m < e'$ and $p \nmid m$, then the morphism in \eqref{item: morphism from differential to Gr}
			induces an isomorphism
				\[
						\gr^{m} R^{q} \Psi \Lambda(q)
					\cong
						\Omega_{Y}^{q - 1}.
				\]
		\item
			If $1 \le m < e'$ and $p \mid m$, then the morphism in \eqref{item: morphism from differential to Gr}
			and the differential $d$ induce an isomorphism
				\[
						\gr^{m} R^{q} \Psi \Lambda(q)
					\cong
						d \Omega_{Y}^{q - 1} \oplus d \Omega_{Y}^{q - 2}.
				\]
		\item
			If $m \ge e'$, then
				\[
						U^{m} R^{q} \Psi \Lambda(q)
					=
						0.
				\]
	\end{enumerate}
\end{Prop}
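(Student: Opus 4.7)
The strategy mirrors the proof of Proposition \ref{prop: boundary map}: every assertion is local on $Y_{\RPS}$, so I will check it on stalks at strict henselian local rings and there reduce to the Bloch-Kato computation \cite[Theorem (1.4)]{BK86} via an ind-smooth approximation. One may thus assume $X = \Spec A$ is affine, $Y = \Spec R$ has a $p$-base, and fix the stalk at a strict henselian local ring $R'$ of some relatively perfectly smooth $R$-algebra at some prime. By Proposition \ref{prop: ind smooth approximation}, pick an ind-smooth lifting $A' = \varinjlim_{i} A_{i}$ of $R'$ over $A$ with each $A_{i}$ smooth over $A$ (hence over $\Order_{K}$). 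Proposition \ref{prop: ind smooth approximation of cohomology} then identifies the stalk of $R^{q} \Psi \Lambda(q)$ at $R'$ with $H^{q}(A'_{K}, \Lambda(q)) = \varinjlim_{i} H^{q}((A_{i})_{K}, \Lambda(q))$.

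Next I would transport the filtration $U^{\bullet}$ to the Bloch-Kato side. Define $U^{m} H^{q}((A_{i})_{K}, \Lambda(q))$ as in \cite[(1.2)]{BK86}, namely the subgroup generated by symbols $\{x_{1}, \dots, x_{q}\}$ with $x_{1} \in 1 + \pi^{m} A_{i}$. Under the stalk identification, the colimit $\varinjlim_{i} U^{m} H^{q}((A_{i})_{K}, \Lambda(q))$ matches the stalk of $U^{m} R^{q} \Psi \Lambda(q)$, because the symbol map \eqref{eq: symbol map} factors through units of the canonical liftings and the condition $x_{1} - 1 \in \pi^{m} i^{\ast} \Ga$ on local sections of $i^{\ast} \Gm$ is stalk-locally the condition $x_{1} \in 1 + \pi^{m} A'$. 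With the filtrations matched, items \eqref{item: morphism from differential to Gr} and (1)--(4) follow by applying the corresponding parts of \cite[Theorem (1.4)]{BK86} to each $A_{i}$ and passing to the colimit, using that $\Omega^{\bullet}_{(-)/k}$, $d \Omega^{\bullet}_{(-)/k}$, and the dlog subsheaves $\nu_{1}(\bullet)$ all commute with filtered colimits of rings, and that the graded-piece isomorphisms in \cite{BK86} are natural in the smooth $\Order_{K}$-algebra.

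The main obstacle is item (1): one must not only produce the abstract isomorphism $\gr^{0} R^{q} \Psi \Lambda(q) \cong \nu(q) \oplus \nu(q-1)$, but also realize it via the specific morphism constructed in Proposition \ref{prop: boundary map} (which uses $\pi$ for the second summand while being $\pi$-independent on the projection onto $\nu(q - 1)$). I would handle this by tracing both maps back to their definitions on the smooth $A_{i}$, where by \cite[Theorem (1.4)(i)]{BK86} they both coincide with the canonical dlog construction; the isomorphism on $\gr^{0}$ and the $\pi$-independence claim then descend to the colimit. The remaining items (2)--(4) are then direct transcriptions of the corresponding clauses of Bloch-Kato's theorem through this dictionary, once one notes that the stalk of $\Omega_{Y}^{q-1}$ at $R'$ is $\Omega^{q-1}_{R/k} \tensor_{R} R'$ and similarly for $d \Omega^{q-1}_{Y}$ and $d \Omega^{q-2}_{Y}$.
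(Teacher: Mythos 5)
Your proposal is correct and follows essentially the same route as the paper: check statements on stalks at strict henselian local points, invoke Proposition \ref{prop: ind smooth approximation} to produce an ind-smooth lifting $A'$, identify the stalk of $R^{q}\Psi\Lambda(q)$ with $H^{q}(A'_{K}, \Lambda(q))$ via Proposition \ref{prop: ind smooth approximation of cohomology}, transport the filtration, and quote Bloch--Kato. The paper's proof is the same one-sentence reduction but cites \cite[Corollary (1.4.1)]{BK86} rather than Theorem (1.4), since the corollary is precisely the graded-piece computation; your fuller discussion of filtration compatibility and of colimit-naturality is implicit in the paper's ``same method.''
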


\begin{proof}
	This reduces to \cite[Corollary (1.4.1)]{BK86}
	by the same method as the proof of Proposition \ref{prop: boundary map}.
\end{proof}

\begin{Prop} \label{prop: main thm, cohom dimension is true for n equal to one}
	The statement of Theorem \ref{thm: main} \eqref{item: main thm, cohom dimension} is true for $n = 1$.
\end{Prop}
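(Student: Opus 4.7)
My plan is to reduce both assertions to the structure theory of $p$-primary nearby cycles on smooth $\mathcal{O}_{K}$-algebras developed in \cite{BK86}, accessed stalkwise through the ind-smooth approximation of Proposition \ref{prop: ind smooth approximation of cohomology}.

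First I would treat the case $s = q$, where Proposition \ref{prop: calculation of graded pieces} already does most of the work. The filtration $U^{\bullet} R^{q}\Psi\Lambda(q)$ is exhaustive and terminates at $U^{e'} = 0$, and each graded piece is either $\nu(q) \oplus \nu(q-1)$, a quotient of $\Omega_{Y}^{q-1}$, or a subobject of $d\Omega_{Y}^{q-1} \oplus d\Omega_{Y}^{q-2}$. Each of these lies in $D_{0}(Y, \Lambda)$: the logarithmic sheaves $\nu(j)$ by the Section \ref{sec: The relatively perfectly smooth site} construction; the coherent absolute differentials $\Omega_{Y}^{j}$ as relative perfections of locally free $\Order_{Y}$-modules of rank $\binom{r}{j}$ (since $Y$ is smooth of absolute dimension $r$); and the image sheaves $d\Omega_{Y}^{j}$ as extensions of locally free coherent sheaves and $\nu(\cdot)$ via the Cartier-operator short exact sequences, putting them in $D_{0}$ by closure under extensions. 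Since $\Omega_{Y}^{j}$ and $\nu(j)$ both vanish for $j > r$, all graded pieces vanish when $q > r + 1$, giving the vanishing of $R^{q}\Psi\Lambda(q)$ in that range and, by finitely many extensions in $D_{0}$, its membership in $D_{0}$.

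For general $s$, I would extend the Section \ref{sec: The relatively perfectly smooth site} construction of the symbol map and filtration to arbitrary Tate twists, following the parallel analysis in \cite[Theorem (1.4), Corollary (1.4.1)]{BK86} which is developed there for all $s$. The graded pieces of the resulting filtration $U^{\bullet} R^{q} \Psi \Lambda(s)$ are again assembled from sheaves of the form $\nu(j)$, $\Omega_{Y}^{j}$, and $d\Omega_{Y}^{j}$ for various $j$ depending on both $q$ and $s$, with the same vanishing beyond degree $r$. Via Proposition \ref{prop: ind smooth approximation of cohomology} the verification of both properties reduces to applying the Bloch-Kato structure theorem to the ind-smooth lifting $A'$, where $A'_{K}$ is ind-smooth over $K$ of relative dimension $d$.

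The main obstacle will be the general-$s$ bookkeeping: Section \ref{sec: The relatively perfectly smooth site} presently constructs the symbol map and filtration only for $s = q$, so for $s \ne q$ one has to either reproduce the formalism with the Tate twist decoupled from the cohomological degree, or cite the corresponding Bloch-Kato statements directly and transport them to $Y_{\RPS}$ through Proposition \ref{prop: ind smooth approximation of cohomology}. Once the filtration is in place, the $D_{0}$-membership and the degree bound follow from the same extension-closure argument as in the $s = q$ case, uniformly across all twists.
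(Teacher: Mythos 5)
The paper's proof is a one-liner: since $[K(\zeta_{p}):K]$ is prime to $p$, one may replace $K$ by $K(\zeta_{p})$ (the relevant sheaves are direct summands of their pushforward-pullback along the corresponding prime-to-$p$ cover, or equivalently a restriction-corestriction argument applies), and once $\zeta_{p}\in K$ every Tate twist $\Lambda(s)$ is (non-canonically, but compatibly) isomorphic to $\Lambda(q)$. This collapses the general-$s$ case to the $s=q$ case, which Proposition \ref{prop: calculation of graded pieces} handles directly. This is exactly the device the paper goes on to use throughout \S6--7 (``With the choice of $\zeta_{p}$, we may identify all the Tate twists $\Lambda(q)$ with $\Lambda$ in a compatible way'').

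Your $s=q$ analysis is correct and matches the paper: the symbol filtration is finite with graded pieces built from $\nu(j)$, $\Omega_{Y}^{j}$, and $d\Omega_{Y}^{j}$, all in $D_{0}$ and vanishing for $j>r$, giving both $D_{0}$-membership and the degree bound $[0,r+1]$. But for general $s$ you propose to re-derive a Bloch--Kato-type filtration with the twist decoupled from the cohomological degree, and you flag this yourself as ``the main obstacle.'' That is indeed a gap: \cite[Theorem (1.4), Corollary (1.4.1)]{BK86} are stated for the diagonal twist $s=q$, and an off-diagonal structure theorem is not available off the shelf in the form you'd need. The missing idea is the elementary reduction to $\zeta_{p}\in K$, which makes the entire off-diagonal case free. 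Your route would require substantial additional work that the paper avoids entirely; once you insert the $\zeta_{p}$ reduction, your proof of the $s=q$ case finishes the proposition.
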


\begin{proof}
	We may assume that $K$ contains a primitive $p$-th root of unity $\zeta_{p}$
	since $[K(\zeta_{p}) : K]$ is prime to $p$.
	Then the result follows from Proposition \ref{prop: calculation of graded pieces}.
\end{proof}

Thus we have the morphism
	\[
			R \Psi \Lambda(s)
		\tensor^{L}
			R \Psi \Lambda(t)
		\to
			R \Psi \Lambda(r + 1)
		\stackrel{\Tr}{\to}
			\nu(r)[- r - 1]
	\]
stated in Theorem \ref{thm: main} \eqref{item: main thm, duality}
in the case $n = 1$,
where $s, t$ are integers with $s + t = r + 1$.
We want to prove that it induces a perfect duality.
In the rest of this section, we work with $\Lambda = \Z / p \Z$-coefficients.
As above, we may assume that $\zeta_{p} \in K$.
With the choice of $\zeta_{p}$,
we may identify all the Tate twists $\Lambda(q)$ with $\Lambda$ in a compatible way.
Let $\mathcal{E} = R \Psi \Lambda$ ($\cong R \Psi \Lambda(q)$ for any $q$).
The above morphism  may be written as
	\begin{equation} \label{eq: duality morphism mod p with root of unity}
			\mathcal{E} \tensor^{L} \mathcal{E}
		\to
			\nu(r)[-r - 1],
	\end{equation}
which is independent of the integers $s, t$.
We have the above filtrations $U^{m} H^{q} \mathcal{E}$ and graded pieces $\gr^{m} H^{q} \mathcal{E}$ for any $q$.
For any $s$, define $\tau_{\ge s}' \mathcal{E}$ to be the canonical mapping cone of the natural morphism
$U^{1} H^{s} \mathcal{E}[-s] \to \tau_{\ge s} \mathcal{E}$
and $\tau_{\le s}' \mathcal{E}$ to be the canonical mapping fiber of the natural morphism
$\tau_{\le s} \mathcal{E} \to \gr^{0} H^{s} \mathcal{E}[-s]$.
We have distinguished triangles
	\begin{gather*}
				U^{1} H^{s} \mathcal{E}[-s]
			\to
				\tau_{\ge s} \mathcal{E}
			\to
				\tau_{\ge s}' \mathcal{E},
		\\
				\tau_{\le s}' \mathcal{E}
			\to
				\tau_{\le s} \mathcal{E}
			\to
				\gr^{0} H^{s} \mathcal{E}[-s],
		\\
				\gr^{0} H^{s} \mathcal{E}[-s]
			\to
				\tau_{\ge s}' \mathcal{E}
			\to
				\tau_{\ge s + 1} \mathcal{E},
		\\
				\tau_{\le s - 1} \mathcal{E}
			\to
				\tau_{\le s}' \mathcal{E}
			\to
				U^{1} H^{s} \mathcal{E}[-s],
	\end{gather*}
where the latter two are truncation triangles.

\begin{Prop} \label{prop: splitting pairing to unit part and gr zero part}
	Let us abbreviate $R \sheafhom_{Y}$ as $[\;\cdot\;, \;\cdot\;]$.
	There exists a unique set of morphisms
		\begin{gather*}
					\tau_{\ge s + 1} \mathcal{E} \tensor^{L} \tau_{\le t}' \mathcal{E}
				\to
					\nu(r)[- r -1],
			\\
					\tau_{\ge s}' \mathcal{E} \tensor^{L} \tau_{\le t} \mathcal{E}
				\to
					\nu(r)[- r -1],
			\\
					U^{1} H^{s} \mathcal{E} \tensor^{L} U^{1} H^{t + 1} \mathcal{E}
				\to
					\nu(r)[1],
			\\
					\gr^{0} H^{s} \mathcal{E} \tensor^{L} \gr^{0} H^{t} \mathcal{E}
				\to
					\nu(r)
		\end{gather*}
	for integers $s, t$ with $s + t = r + 1$
	that reduce to \eqref{eq: duality morphism mod p with root of unity} for $s < 0$
	and give morphisms of distinguished triangles from
		\begin{equation} \label{eq: modified truncation}
				U^{1} H^{s} \mathcal{E}[-s]
			\to
				\tau_{\ge s} \mathcal{E}
			\to
				\tau_{\ge s}' \mathcal{E}
		\end{equation}
	to the shift $[- r - 1]$ of 
		\begin{equation} \label{eq: dual of truncation for modified truncation}
				\bigl[
					U^{1} H^{t + 1} \mathcal{E}[- t - 1], \nu(r)
				\bigr]
			\to
				[\tau_{\le t + 1}' \mathcal{E}, \nu(r)]
			\to
				[\tau_{\le t} \mathcal{E}, \nu(r)]
		\end{equation}
	and from
		\begin{equation} \label{eq: truncation for modified truncation}
				\gr^{0} H^{s} \mathcal{E}[-s]
			\to
				\tau_{\ge s}' \mathcal{E}
			\to
				\tau_{\ge s + 1} \mathcal{E}
		\end{equation}
	to the shift $[-r - 1]$ of
		\begin{equation} \label{eq: dual of modified truncation}
				\bigl[
					\gr^{0} H^{t} \mathcal{E}[-t], \nu(r)
				\bigr]
			\to
				[\tau_{\le t} \mathcal{E}, \nu(r)]
			\to
				[\tau_{\le t}' \mathcal{E}, \nu(r)].
		\end{equation}
\end{Prop}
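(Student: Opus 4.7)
The plan is to construct the four morphisms by systematically deriving them from the main pairing $\mu \colon \mathcal{E} \otimes^{L} \mathcal{E} \to \nu(r)[-r-1]$ of \eqref{eq: duality morphism mod p with root of unity}, using two elementary inputs. First (cohomological degree): $\mathcal{E}$ is concentrated in degrees $[0, r + 1]$ by Proposition \ref{prop: main thm, cohom dimension is true for n equal to one}, so every derived tensor product of (modified) truncations of $\mathcal{E}$ we form lies in degrees $\le r + 1$; consequently, morphisms from such complexes into $\nu(r)[-r-1]$ are determined by their effect on $H^{r+1}$, and vanish whenever the source sits in degrees $\le r$. Second (symbols and trace): by multiplicativity of the symbol map \eqref{eq: symbol map}, cup product carries $U^{1} H^{s}\mathcal{E} \otimes H^{t}\mathcal{E}$ and $H^{s}\mathcal{E} \otimes U^{1} H^{t}\mathcal{E}$ into $U^{1} H^{r + 1}\mathcal{E}$, and $\Tr$ annihilates $U^{1} H^{r + 1}\mathcal{E}$ (since $\dlog(1 + x\pi^{m})$ reduces to $\dlog 1 = 0$ modulo $\pi$, as used in the construction of $\Tr$ in Proposition \ref{prop: boundary map}).

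With these, the first three morphisms are produced as follows. The pairing $\gr^{0} H^{s}\mathcal{E} \otimes^{L} \gr^{0} H^{t}\mathcal{E} \to \nu(r)$ is the cup-product-trace pairing on $H^{s} \otimes H^{t}$, which by the second input factors through the $\gr^{0}$ quotients. For $\tau_{\ge s + 1}\mathcal{E} \otimes^{L} \tau'_{\le t}\mathcal{E} \to \nu(r)[-r-1]$, I apply $\mu$ to $\mathcal{E} \otimes^{L} \tau'_{\le t}\mathcal{E}$; its restriction to $\tau_{\le s}\mathcal{E} \otimes^{L} \tau'_{\le t}\mathcal{E}$ has top cohomology $H^{s}\mathcal{E} \otimes U^{1} H^{t}\mathcal{E}$ pairing to zero in $\nu(r)$, so by the first input the restriction vanishes, and the triangle $\tau_{\le s}\mathcal{E} \to \mathcal{E} \to \tau_{\ge s + 1}\mathcal{E}$ gives the factorization. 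For $\tau'_{\ge s}\mathcal{E} \otimes^{L} \tau_{\le t}\mathcal{E} \to \nu(r)[-r-1]$, I first restrict $\mu$ to $\tau_{\ge s}\mathcal{E} \otimes^{L} \tau_{\le t}\mathcal{E}$ (possible because $\tau_{\le s - 1}\mathcal{E} \otimes^{L} \tau_{\le t}\mathcal{E}$ lies in degrees $\le r$ and so maps trivially to $\nu(r)[-r-1]$), then quotient by the subcomplex $U^{1} H^{s}[-s] \otimes^{L} \tau_{\le t}\mathcal{E}$ which pairs to zero by the same top-cohomology argument. All three of these factorizations are unique: the relevant obstruction groups are of the form $\Hom_{D}(A, \nu(r)[-r-2])$ with $A$ concentrated in degrees $\le r + 1$, and vanish by the first input.

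The remaining pairing $U^{1} H^{s}\mathcal{E} \otimes^{L} U^{1} H^{t + 1}\mathcal{E} \to \nu(r)[1]$ is obtained by invoking axiom TR3 on the morphism of triangles from \eqref{eq: modified truncation} to the $[-r-1]$-shift of \eqref{eq: dual of truncation for modified truncation}. Its central and right vertical arrows are (by adjunction) the previously-constructed pairings $\tau_{\ge s}\mathcal{E} \otimes^{L} \tau'_{\le t + 1}\mathcal{E} \to \nu(r)[-r-1]$ and $\tau'_{\ge s}\mathcal{E} \otimes^{L} \tau_{\le t}\mathcal{E} \to \nu(r)[-r-1]$. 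The right-hand square commutes because, after adjunction, both composites coincide with the unique pairing $\tau_{\ge s}\mathcal{E} \otimes^{L} \tau_{\le t}\mathcal{E} \to \nu(r)[-r-1]$ inherited from $\mu$. TR3 then supplies a third vertical arrow, equivalent via adjunction (and $s + t = r + 1$) to the desired pairing, and its uniqueness follows from the vanishing of $\Hom_{D}(U^{1} H^{s}[-s] \otimes^{L} \tau_{\le t}\mathcal{E}, \nu(r)[-r-2])$ by the first input. The second morphism-of-triangles diagram, involving $\gr^{0} H^{s}[-s]$ and $\gr^{0} H^{t}[-t]$, is handled by the same pattern, with the $\gr^{0}$ pairing above fitting into it uniquely. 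The reduction condition at $s < 0$ is automatic: there all truncation functors become identities on $\mathcal{E}$, the sheaves $U^{1} H^{s}$ and $\gr^{0} H^{s}$ vanish, and the constructed pairings reduce tautologically to $\mu$.

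The main obstacle lies in the bookkeeping for the TR3 step and for the compatibility checks in the two morphism-of-triangles diagrams: every commutativity assertion reduces, via adjunction and the two inputs above, to the uniqueness of the intermediate pairing $\tau_{\ge s}\mathcal{E} \otimes^{L} \tau_{\le t}\mathcal{E} \to \nu(r)[-r-1]$ inherited from $\mu$.
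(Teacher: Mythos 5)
Your proposal reaches the same conclusion and shares the overall architecture with the paper — both ultimately rely on TR3 together with vanishing of certain $\Hom$ groups to propagate the pairing through the filtrations, and both dispatch uniqueness by a shift argument (the relevant $\Hom$'s at $[-r-2]$ vanish because the source complex lives in degrees $\le r+1$). The genuine difference lies in how the \emph{existence} vanishings are obtained. The paper proves the stronger statement that the whole group $\Hom\bigl(U^{1} H^{s} \mathcal{E}[-s],\ [\tau_{\le t}\mathcal{E}, \nu(r)][-r-1]\bigr)$ is zero, by reducing it to $\sheafhom(U^{1} H^{s}\mathcal{E}, \nu(r)) = 0$, which in turn follows from the fact (Proposition \ref{prop: calculation of graded pieces}) that $U^{1} H^{s}\mathcal{E}$ is a finite successive extension of relative perfections of locally free $\Order_{Y}$-modules, and Kato's Theorem 3.2~(ii) in \cite{Kat86}, which kills $\sheafhom$ from such coherent-type sheaves into $\nu(r)$. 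You instead show only that the one morphism you actually need (the one induced by $\mu$) vanishes, and you get that by a purely multiplicative argument: the symbol-filtration is multiplicative under cup product (so $U^{1} \cdot U^{0} \subseteq U^{1}$, cf.\ \cite[Lemma (4.1)]{BK86}, used in the paper's Proposition \ref{prop: Zariski duality}), and $\Tr$ kills $U^{1} H^{r+1}\mathcal{E}$ because the morphism \eqref{eq: boundary map} factors through $\gr^{0}$ by Proposition \ref{prop: calculation of graded pieces}~(1). Two small points to tighten if you write this up: (a) your multiplicativity claim for $H^{s}\mathcal{E} \otimes U^{1} H^{t}\mathcal{E}$ (the $U^{1}$ in the \emph{second} factor) needs the graded anti-commutativity of the symbol/cup product to move the unit into first position, since the filtration is defined by a condition on the first slot only; (b) your heuristic ``$\dlog(1 + x\pi^{m})$ reduces to $\dlog 1 = 0$'' is the right picture, but the clean citation is Proposition \ref{prop: calculation of graded pieces}~(1), which already records that the trace factors through $\gr^{0}$. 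The trade-off: the paper's structural vanishing is somewhat more robust (it does not depend on the specific pairing) and makes the uniqueness completely formal, whereas your route avoids invoking Kato's Theorem 3.2~(ii) at this step by leveraging concrete properties of the trace and of symbols, at the cost of more bookkeeping in the square-commutativity checks that you acknowledge at the end.
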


\begin{proof}
	First we show that
		\begin{equation} \label{eq: condition to extend the pairing, from units}
			\begin{aligned}
				&		\Hom \bigl(
							U^{1} H^{s} \mathcal{E}[-s],
							[\tau_{\le t} \mathcal{E}, \nu(r)][- r - 1]
						\bigr)
				\\
				&	=
						\Hom \bigl(
							U^{1} H^{s} \mathcal{E}[-s],
							[\tau_{\le t} \mathcal{E}, \nu(r)][- r - 2]
						\bigr)
					=
						0.
			\end{aligned}
		\end{equation}
	Since $[\tau_{\le t} \mathcal{E}, \nu(r)]$ is concentrated in degrees $\ge -t = s - r - 1$,
	the second term is zero simply by a degree reason.
	By the same reasoning, the first term is equal to
		\[
				\Hom \bigl(
					U^{1} H^{s} \mathcal{E},
					\sheafhom(H^{t} \mathcal{E}, \nu(r))
				\bigr)
			=
				\Hom \bigl(
					H^{t} \mathcal{E},
					\sheafhom(U^{1} H^{s} \mathcal{E}, \nu(r))
				\bigr).
		\]
	The sheaf $U^{1} H^{s} \mathcal{E}$ is a finite successive extension of locally free $\Order_{Y}$-modules of finite rank
	by Proposition \ref{prop: calculation of graded pieces}.
	Hence $\sheafhom(U^{1} H^{s} \mathcal{E}, \nu(r)) = 0$
	by \cite[Theorem 3.2 (ii)]{Kat86}.
	This proves \eqref{eq: condition to extend the pairing, from units}.
	
	Next we show that
		\begin{equation} \label{eq: condition to extend the pairing, from gr zero}
			\begin{aligned}
				&		\Hom \bigl(
							\gr^{0} H^{s} \mathcal{E}[-s],
							[\tau_{\le t}' \mathcal{E}, \nu(r)][- r - 1]
						\bigr)
				\\
				&	=
						\Hom \bigl(
							\gr^{0} H^{s} \mathcal{E}[-s],
							[\tau_{\le t}' \mathcal{E}, \nu(r)][- r - 2]
						\bigr)
					=
						0.
			\end{aligned}
		\end{equation}
	The same reasoning as above shows that the second term is zero and
	the first term is equal to
		\[
			\Hom \bigl(
				\gr^{0} H^{s} \mathcal{E},
				\sheafhom(U^{1} H^{t} \mathcal{E}, \nu(r))
			\bigr)
		\]
	since $H^{t}(\tau_{\le t}' \mathcal{E}) = U^{1} H^{t} \mathcal{E}$.
	We have $\sheafhom(U^{1} H^{t} \mathcal{E}, \nu(r)) = 0$ by \cite[Theorem 3.2 (ii)]{Kat86}
	since $U^{1} H^{t} \mathcal{E}$ is a finite successive extension of locally free $\Order_{Y}$-modules of finite rank
	by Proposition \ref{prop: calculation of graded pieces}.
	This proves \eqref{eq: condition to extend the pairing, from gr zero}.
	
	Now we prove the proposition by induction on $s$.
	There is nothing to do for $s < 0$.
	Fix integers $s_{0}, t_{0}$ with $s_{0} + t_{0} = r + 1$.
	Assume that there exists a unique set of morphisms
	as stated for $s, t$ with $s + t = r + 1$ and $s \le s_{0}$.
	We want to prove the same for $s = s_{0} + 1$ and $t = t_{0} - 1$.
	By assumption, we have a morphism
	from $\tau_{\ge s_{0} + 1} \mathcal{E}$ to the shift $[- r - 1]$ of $[\tau_{\le t_{0}}' \mathcal{E}, \nu(r)]$.
	This gives a morphism from the middle term of
	\eqref{eq: modified truncation} to the middle term of the shift $[-r - 1]$ of
	\eqref{eq: dual of truncation for modified truncation}
	for $s = s_{0} + 1$.
	By \eqref{eq: condition to extend the pairing, from units},
	this morphism uniquely extends to a morphism of distinguished triangles
	from \eqref{eq: modified truncation} to the shift $[-r - 1]$ of
	\eqref{eq: dual of truncation for modified truncation} for $s = s_{0} + 1$.
	In particular, we have a morphism
	from $\tau_{\ge s_{0} + 1}' \mathcal{E}$ to the shift $[- r - 1]$ of $[\tau_{\le t_{0} - 1} \mathcal{E}, \nu(r)]$.
	This gives a morphism from the middle term of
	\eqref{eq: truncation for modified truncation} to the middle term of the shift $[-r - 1]$ of
	\eqref{eq: dual of modified truncation}
	for $s = s_{0} + 1$.
	By \eqref{eq: condition to extend the pairing, from gr zero},
	this morphism uniquely extends to a morphism of distinguished triangles
	from \eqref{eq: truncation for modified truncation} to the shift $[-r - 1]$ of
	\eqref{eq: dual of modified truncation} for $s = s_{0} + 1$.
	This proves the induction step,
	and hence the proposition itself.
\end{proof}

\begin{Prop} \label{prop: duality for gr zero}
	The morphism
		$
				\gr^{0} H^{s} \mathcal{E} \tensor^{L} \gr^{0} H^{t} \mathcal{E}
			\to
				\nu(r)
		$
	in Proposition \ref{prop: splitting pairing to unit part and gr zero part}
	gives a perfect duality.
\end{Prop}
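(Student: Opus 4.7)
The plan is to make the pairing explicit via the symbol description of $\gr^{0} H^{q} \mathcal{E}$, decompose it as a $2 \times 2$ block of cup-product pairings of logarithmic de Rham--Witt sheaves, show that the two diagonal blocks vanish, and invoke Kato's duality for the two remaining anti-diagonal blocks.

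\textbf{Setup.} By Proposition \ref{prop: calculation of graded pieces} there is a decomposition $\gr^{0} H^{q} \mathcal{E} \cong \nu(q) \oplus \nu(q - 1)$, where the first summand is generated by pure symbols $\{x_{1}, \dots, x_{q}\}$ and the second by symbols $\{y_{1}, \dots, y_{q-1}, \pi\}$ with exactly one factor of $\pi$. Since $Y$ is smooth of relative dimension $d$ over $k$ with $[k \colon k^{p}] = p^{r_{0}}$, the rank of $\Omega_{Y}^{1}$ is $r$, so $\nu(r+1) = 0$; hence $\gr^{0} H^{r+1} \mathcal{E} \cong \nu(r)$, and the trace morphism of Proposition \ref{prop: boundary map} is the projection onto this summand. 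By the uniqueness clause in Proposition \ref{prop: splitting pairing to unit part and gr zero part}, the pairing in question is induced by symbol multiplication followed by this projection.

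\textbf{The four blocks.} On $\nu(s) \tensor \nu(t)$ the product $\{x_{1}, \dots, x_{s}\} \cdot \{y_{1}, \dots, y_{t}\}$ is a pure symbol, living in the vanishing $\nu(r+1)$-summand of $\gr^{0} H^{r+1}\mathcal{E}$; so this block is zero. On $\nu(s-1) \tensor \nu(t-1)$ the product $\{x_{1}, \dots, x_{s-1}, \pi\} \cdot \{y_{1}, \dots, y_{t-1}, \pi\}$ contains two copies of $\pi$; the Steinberg identity $\{\pi, -\pi\} = 0$ yields, mod $p$, the relation $\{\pi, \pi\} = \{-1, \pi\}$, so the product equals up to sign $\{x_{1}, \dots, x_{s-1}, y_{1}, \dots, y_{t-1}, -1, \pi\}$, whose image in $\nu(r)$ vanishes because $\dlog(-1) = 0$ in $\nu(1)$ (the element $-1 \in k^{\times}$ has order prime to $p$). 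The two remaining, anti-diagonal blocks $\nu(s) \tensor \nu(t-1) \to \nu(r)$ and $\nu(s-1) \tensor \nu(t) \to \nu(r)$ are identified with the wedge product $\alpha \tensor \beta \mapsto \alpha \wedge \beta$ on logarithmic de Rham--Witt sheaves.

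\textbf{Conclusion.} By Kato's duality theorem \cite[Theorem 4.3]{Kat86}, $\nu_{n}(r)$ is a dualizing object on $D_{0}(Y_{\RP}, \Lambda_{n})$, and the wedge product gives an isomorphism $\nu(a) \isomto R \sheafhom_{Y_{\RP}}(\nu(b), \nu(r))$ whenever $a + b = r$. Via Proposition \ref{prop: from RP to RPS} this transfers to $Y_{\RPS}$. Applied with $(a, b) = (s, t-1)$ and $(s-1, t)$, both anti-diagonal blocks give perfect dualities, and hence so does the whole block-anti-diagonal pairing between $\nu(s) \oplus \nu(s-1)$ and $\nu(t) \oplus \nu(t-1)$. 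I expect the main obstacle to be the block computation, in particular the Steinberg-relation step that kills the $(\pi, \pi)$ block and the careful verification that symbol multiplication followed by the trace really matches the wedge product on the two non-vanishing blocks under the chosen $\zeta_{p}$-identification of Tate twists.
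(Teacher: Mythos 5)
Your proof is correct and takes essentially the same route as the paper: decompose $\gr^{0} H^{q} \mathcal{E} \cong \nu(q) \oplus \nu(q-1)$, compute the pairing on symbols to see it is block-anti-diagonal given by wedge products, and conclude via Kato's duality for logarithmic de Rham--Witt sheaves. The paper states the explicit formula $((\omega,\omega'),(\tau,\tau')) \mapsto \pm\omega\wedge\tau' \pm \omega'\wedge\tau$ and invokes \cite[Theorem 3.2(i)]{Kat86}, leaving the symbol computation as ``easy''; your block-by-block verification (the $\nu(r+1)=0$ vanishing and the Steinberg relation with $\dlog(-1)=0$) is exactly that elided computation.
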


\begin{proof}
	The stated morphism factors through $\gr^{0} H^{s} \mathcal{E} \tensor \gr^{0} H^{t} \mathcal{E}$.
	We have $\gr^{0} H^{s} \mathcal{E} \cong \nu(s) \oplus \nu(s - 1)$
	and $\gr^{0} H^{t} \mathcal{E} \cong \nu(t) \oplus \nu(t - 1)$
	by Proposition \ref{prop: calculation of graded pieces}.
	Hence the stated morphism gives rise to a pairing
		\[
					\nu(s) \oplus \nu(s - 1)
				\times
					\nu(t) \oplus \nu(t - 1)
			\to
				\nu(r).
		\]
	By \cite[Theorem 3.2 (i)]{Kat86},
	it is enough to show that this pairing is given by
		\begin{equation} \label{eq: explicit pairing in gr zero}
				\bigl(
					(\omega, \omega'), (\tau, \tau')
				\bigr)
			\mapsto
				\pm \omega \wedge \tau' \pm \omega' \wedge \tau.
		\end{equation}
	We may assume that $X = \Spec A$ is affine and $Y = \Spec R$ has a $p$-base.
	The composite of the natural surjection
	$H^{s} \mathcal{E} \tensor H^{t} \mathcal{E} \onto \gr^{0} H^{s} \mathcal{E} \tensor \gr^{0} H^{t} \mathcal{E}$
	and the morphism $\gr^{0} H^{s} \mathcal{E} \tensor \gr^{0} H^{t} \mathcal{E} \to \nu(r)$
	is the morphism $H^{s} \mathcal{E} \tensor H^{t} \mathcal{E} \to \nu(r)$
	induced by \eqref{eq: duality morphism mod p with root of unity}.
	Let $R'$ be the strict henselian local ring
	of a relatively perfectly smooth $R$-algebra at a prime ideal.
	We want to describe our pairing on $R'$-points.
	The map on $R'$-points of the morphism $H^{s} \mathcal{E} \tensor H^{t} \mathcal{E} \to \nu(r)$ is of the form
		\[
					H^{s}(R_{\Hat{A}_{K}}', \Lambda(s))
				\tensor
					H^{t}(R_{\Hat{A}_{K}}', \Lambda(t))
			\to
				H^{r + 1}(R_{\Hat{A}_{K}}', \Lambda(r + 1))
			\to
				\Gamma(R', \nu(r)).
		\]
	Let $A'$ be an ind-smooth lifting of $R'$ over $A$.
	By Proposition \ref{prop: ind smooth approximation of cohomology},
	the above map can be written as
		\[
					H^{s}(A_{K}', \Lambda(s))
				\tensor
					H^{t}(A_{K}', \Lambda(t))
			\to
				H^{r + 1}(A_{K}', \Lambda(r + 1))
			\to
				\Gamma(R', \nu(r)).
		\]
	The groups in the first term are generated by symbols
	by \cite[Theorem (1.4)]{BK86}.
	The first map is given by concatenation of symbols
	and the second described by the paragraph after \cite[Corollary (1.4.1)]{BK86}.
	By an easy computation of symbols and dlog forms,
	we see that our pairing is indeed given the formula \eqref{eq: explicit pairing in gr zero}.
	This proves the proposition.
\end{proof}


\section{Mod $p$ case II: duality for $U^{1}$}
\label{sec: Mod p case: duality for U one}
We keep the notation from the last section.
In particular, we fix a prime element $\pi$ of $K$
and a primitive $p$-th root of unity $\zeta_{p} \in K$,
and we work with $\Lambda$-coefficients.
To treat the part $U^{1} H^{s} \mathcal{E} \tensor^{L} U^{1} H^{t + 1} \mathcal{E} \to \nu(r)[1]$
of Proposition \ref{prop: splitting pairing to unit part and gr zero part},
it is convenient to use the Zariski topology in addition to the \'etale topology.

Let $Y_{\RPSZ}$ be the category of relatively perfectly smooth $Y$-schemes
endowed with the Zariski topology.
Let $\varepsilon \colon Y_{\RPS} \to Y_{\RPSZ}$ be the morphism defined by the identity functor.
Recall from \cite[(3.1.4), (3.1.5)]{Kat86} that there are exact sequences
	\begin{gather*}
				0
			\to
				\nu(r)
			\to
				\Omega_{Y, d = 0}^{r}
			\overset{C - 1}{\longrightarrow}
				\Omega_{Y}^{r}
			\to
				0,
		\\
				0
			\to
				\nu(r)
			\to
				\Omega_{Y}^{r}
			\overset{C^{-1} - 1}{\longrightarrow}
				\Omega_{Y}^{r} / d \Omega_{Y}^{r - 1}
			\to
				0
	\end{gather*}
in $Y_{\RPS}$, where $C$ is the Cartier operator.
Since $r = r_{0} + d$ is the number of elements in local $p$-bases of $Y$,
we have $\Omega_{Y}^{r + 1} = 0$,
so $\Omega_{Y, d = 0}^{r} = \Omega_{Y}^{r}$
and $C$ is an endomorphism of $\Omega_{Y}^{r}$.
We can view $\nu(r)$ as a sheaf on $Y_{\RPSZ}$,
which is the kernel of the endomorphism $C - 1$ on $\Omega_{Y}^{r}$.
Define a sheaf $\xi(r)$ on $Y_{\RPSZ}$ to be
the cokernel of the endomorphism $C - 1$ on $\Omega_{Y}^{r}$ over $Y_{\RPSZ}$.
The exact sequence $0 \to \nu(r) \to \Omega_{Y}^{r} \overset{C - 1}{\to} \Omega_{Y}^{r} \to 0$ over $Y_{\RPS}$
shows that $R^{n} \varepsilon_{\ast} \nu(r) = 0$ for $n \ge 2$
and $R^{1} \varepsilon_{\ast} \nu(r) = \xi(r)$, and
defines a morphism
	\[
			R \varepsilon_{\ast} \nu(r)
		\to
			\xi(r)[-1]
	\]
in $D(Y_{\RPSZ}, \Lambda)$.
For any $M \in D_{0}(Y_{\RPS}, \Lambda)$, 
the isomorphism $\varepsilon^{\ast} R \varepsilon_{\ast} M \cong M$,
the sheafified derived adjunction and the above morphism define a morphism
	\begin{align*}
				R \varepsilon_{\ast}
				R \sheafhom_{Y_{\RPS}}(M, \nu(r))
		&	\cong
				R \sheafhom_{Y_{\RPSZ}}(R \varepsilon_{\ast} M, R \varepsilon_{\ast} \nu(r))
		\\
		&	\to
				R \sheafhom_{Y_{\RPSZ}}(R \varepsilon_{\ast} M, \xi(r))[-1]
	\end{align*}
in $D(Y_{\RPSZ}, \Lambda)$.

\begin{Prop} \label{prop: etale duality pushes to Zariski duality}
	The above morphism is an isomorphism.
\end{Prop}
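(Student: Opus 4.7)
The morphism in the proposition is constructed as the composition of the sheafified derived adjunction isomorphism $R\varepsilon_* R\sheafhom_{Y_{\RPS}}(M, \nu(r)) \cong R\sheafhom_{Y_{\RPSZ}}(R\varepsilon_* M, R\varepsilon_* \nu(r))$ (using $\varepsilon^* R\varepsilon_* M \cong M$) with the map induced by $R\varepsilon_* \nu(r) \to \xi(r)[-1]$. Since the fibre of this latter map on $Y_{\RPSZ}$ is $\nu(r)$ (the ordinary pushforward, i.e.\ $H^0 R\varepsilon_*\nu(r)$), applying $R\sheafhom_{Y_{\RPSZ}}(R\varepsilon_* M, -)$ to the triangle $\nu(r) \to R\varepsilon_*\nu(r) \to \xi(r)[-1]$ shows that the proposition is equivalent to the vanishing
\[
R\sheafhom_{Y_{\RPSZ}}(R\varepsilon_* M, \nu(r)) = 0 \qquad \text{for every } M \in D_0(Y_{\RPS}, \Lambda).
\]

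My plan is to verify this by direct computation using the generators of $D_0$. Since both sides of the proposed morphism are triangulated functors of $M$ and $D_0(Y_{\RPS}, \Lambda)$ is generated as a thick subcategory by relative perfections $F^{\RP}$ of locally free coherent $\mathcal{O}_Y$-modules of finite rank, I would reduce to the case $M = F = F^{\RP}$ for such $F$. Because $F^{\RP}$ is essentially quasi-coherent on each relatively perfectly smooth $Y'$, \'etale-Zariski comparison gives $R\varepsilon_* F \cong F$, so one may view $F$ as a sheaf on either site via $\varepsilon^*$ and $\varepsilon_*$ interchangeably.

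Next I would use the short exact sequence $0 \to \nu(r) \to \Omega^r_Y \overset{C-1}{\to} \Omega^r_Y \to 0$ on $Y_{\RPS}$ (which is exact since $\Omega^{r+1}_Y = 0$) to express $\nu(r)$ as the two-term complex $[\Omega^r_Y \xrightarrow{C-1} \Omega^r_Y]$ in degrees $[0,1]$. Applying $R\sheafhom_{Y_{\RPS}}(F, -)$ termwise and then $R\varepsilon_*$, and invoking \cite[Theorem 3.2]{Kat86} (which controls the derived sheaf-Hom's into $\Omega^s_Y$ and in particular asserts $\sheafhom_{Y_{\RPS}}(F, \nu(r)) = 0$), I would identify $R\varepsilon_* R\sheafhom_{Y_{\RPS}}(F, \nu(r))$ with a two-term complex on $Y_{\RPSZ}$ of the form $[\sheafhom_{Y_{\RPSZ}}(F, \Omega^r_Y) \xrightarrow{C-1} \sheafhom_{Y_{\RPSZ}}(F, \Omega^r_Y)]$ in degrees $[0,1]$. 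Its $H^0$ is $\sheafhom_{Y_{\RPSZ}}(F, \nu(r)) = \varepsilon_*\sheafhom_{Y_{\RPS}}(F, \nu(r))$, which vanishes by the cited theorem; its $H^1$ is $\sheafhom_{Y_{\RPSZ}}(F, \xi(r))$, which agrees with $R\sheafhom_{Y_{\RPSZ}}(F, \xi(r))$. The resulting complex, concentrated in degree $1$, is exactly $R\sheafhom_{Y_{\RPSZ}}(F, \xi(r))[-1]$.

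The main technical hurdle will be verifying that $R\sheafhom_{Y_{\RPS}}(F, \Omega^r_Y)$ is cohomologically concentrated in degree zero with (essentially) coherent value, so that $R\varepsilon_*$ passes through the $C-1$ map and produces the stated two-term complex on $Y_{\RPSZ}$; this requires combining the full strength of \cite[Theorem 3.2]{Kat86} with \'etale-Zariski comparison for coherent sheaves on intermediate smooth bases. Once this is in hand, the vanishing $\sheafhom_{Y_{\RPS}}(F, \nu(r)) = 0$ becomes the key input that kills the degree-$0$ cohomology and produces the asserted isomorphism.
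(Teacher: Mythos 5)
Your first reduction is correct and agrees with the paper: since $R\varepsilon_*\nu(r)$ is concentrated in degrees $[0,1]$ with $H^0 = \nu(r)$ (Zariski) and $H^1 = \xi(r)$, the composite morphism is an isomorphism precisely when $R\sheafhom_{Y_{\RPSZ}}(R\varepsilon_* M, \nu(r)) = 0$ for $M \in D_0(Y_{\RPS},\Lambda)$. It also suffices to treat a generator $M = F^{\RP}$ with $F$ locally free coherent, or equivalently $M = \Ga$.

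From there, however, your strategy diverges from the paper's and runs into a real gap. You propose to resolve $\nu(r)$ on $Y_{\RPS}$ by $[\Omega^r_Y \xrightarrow{C-1}\Omega^r_Y]$ and push forward termwise. For this to yield a two-term \emph{Zariski} complex you would need $R\sheafhom_{Y_{\RPS}}(F,\Omega^r_Y)$ to be concentrated in degree $0$ and, more importantly, acyclic for $R\varepsilon_*$. Neither claim is supplied by \cite[Theorem 3.2]{Kat86}: that theorem is a statement about $R\sheafhom$ with \emph{$\nu(r)$} (the dualizing sheaf) as target, not with $\Omega^r_Y$ as target, and it does not control $\sheafhom_{Y_{\RPS}}(\Ga,\Ga)$ or $\sheafhom_{Y_{\RPS}}(\Ga,\Omega^r_Y)$. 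On a relatively perfect site these additive Hom-sheaves are not coherent $\Order_Y$-modules — they involve arbitrary Frobenius twists — so neither the degree-$0$ concentration nor the vanishing of $R^{>0}\varepsilon_*$ follows from quasi-coherent \'etale-Zariski comparison. You flag this as the "main technical hurdle", but it is not a hurdle that the cited tools clear; the paper avoids it entirely. The paper's proof instead feeds in an external input you do not mention: \emph{homotopy invariance of Zariski cohomology with $\nu(r)$-coefficients} (Bloch \cite[Theorem (2.1)]{Blo86} and Geisser-Levine \cite[Theorem 8.3]{GL00}). Combined with Mac Lane's resolution of $\Ga$ — whose terms are built from free $\Lambda$-modules on the representable sheaves $(\Affine^m_{Y'})^{\RP}$ — homotopy invariance shows that $R\Hom_{Y_{\RPSZ}/Y'}(\Ga, \nu(r))$ computes the same thing as $R\Hom_{Y_{\RPSZ}/Y'}(0,\nu(r)) = 0$, because $R\Gamma((\Affine^m_{Y'})^{\RP}_{\zar},\nu(r)) \cong R\Gamma(Y'_{\zar},\nu(r))$ for all $m$. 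This homotopy-invariance input is the idea missing from your argument; without it, the "technical hurdle" stands.
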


\begin{proof}
	We need to show that
	$R \sheafhom_{Y_{\RPSZ}}(R \varepsilon_{\ast} M, \nu(r)) = 0$.
	We may assume that $Y$ is affine with a $p$-base and $M = \Ga$,
	and it is enough to show that $R \Hom_{Y_{\RPSZ} / Y'}(\Ga, \nu(r))$ is zero
	for any relatively perfectly smooth affine $Y$-scheme $Y'$.
	By \cite[Theorem (2.1)]{Blo86} and \cite[Theorem 8.3]{GL00},
	we know that Zariski cohomology with coefficients in $\nu(r)$ is homotopy invariant.
	Hence the natural morphism from $R \Gamma(Y'_{\zar}, \nu(r))$ to
	$R \Gamma((\Affine_{Y'}^{m})_{\zar}, \nu(r))$
	is invertible for any $m \ge 1$.
	This implies that the natural morphism from $R \Gamma(Y'_{\zar}, \nu(r))$ to
	$R \Gamma((\Affine_{Y'}^{m})^{\RP}_{\zar}, \nu(r))$
	(the Zariski cohomology of the relative perfection of $\Affine_{Y'}^{m}$)
	is invertible for any $m \ge 1$
	since $(\Affine_{Y'}^{m})^{\RP}$ is an inverse limit of affine spaces over $Y'$.
	Hence, by using Mac Lane's resolution of $\Ga$ as in the proof of Proposition \ref{prop: from RP to RPS},
	we know that the morphism from
	$R \Hom_{Y_{\RPSZ} / Y'}(0, \nu(r))$ (which is zero)
	to $R \Hom_{Y_{\RPSZ} / Y'}(\Ga, \nu(r))$ induced by $\Ga \to 0$ is invertible.
	The result then follows.
\end{proof}

\begin{Prop}
	Let $M \in M(Y_{\RPS}, \Lambda)$ be a sheaf admitting a finite filtration
	whose graded pieces are isomorphic to relative perfections of coherent sheaves on $Y$
	locally free of finite rank.
	Note that $R \varepsilon_{\ast} M = \varepsilon_{\ast} M$.
	View $M$ also as a sheaf on $Y_{\RPSZ}$.
	Then
		\begin{gather*}
				R \sheafhom_{Y_{\RPS}}(M, \nu(r)),
			\quad
				R \varepsilon_{\ast} R \sheafhom_{Y_{\RPS}}(M, \nu(r)),
			\\
				R \sheafhom_{Y_{\RPSZ}}(M, \xi(r))[-1]
		\end{gather*}
	are all concentrated in degree $1$.
\end{Prop}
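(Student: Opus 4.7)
The plan is to combine dévissage along the filtration, the Cartier short exact sequence, a Mac Lane-resolution computation as in the proof of Proposition \ref{prop: from RP to RPS}, and Proposition \ref{prop: etale duality pushes to Zariski duality}. First I would reduce to the case where $M$ is itself a relative perfection $\mathcal{F}^{\RP}$ of a single locally free coherent $\Order_{Y}$-module $\mathcal{F}$ of finite rank: applying any one of the three functors in question to a short exact sequence of filtration subquotients gives a distinguished triangle, and by the long exact sequence of cohomology sheaves, concentration in degree $1$ at the two outer terms forces concentration in degree $1$ at the middle.

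For $R \sheafhom_{Y_{\RPS}}(\mathcal{F}^{\RP}, \nu(r))$ in this reduced case, I would first invoke \cite[Theorem 3.2 (ii)]{Kat86}, transported from $Y_{\RP}$ to $Y_{\RPS}$ via Proposition \ref{prop: from RP to RPS}, to obtain the vanishing $\sheafhom_{Y_{\RPS}}(\mathcal{F}^{\RP}, \nu(r)) = 0$. For vanishing in degrees $\ge 2$, I would apply $R\sheafhom_{Y_{\RPS}}(\mathcal{F}^{\RP}, -)$ to the Cartier distinguished triangle
\[
    \nu(r) \to \Omega_{Y}^{r} \xrightarrow{C - 1} \Omega_{Y}^{r},
\]
valid on $Y_{\RPS}$ because $\Omega_{Y}^{r + 1} = 0$ forces $\Omega_{Y, d = 0}^{r} = \Omega_{Y}^{r}$. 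This reduces the claim to the concentration in degree $0$ of $R\sheafhom_{Y_{\RPS}}(\mathcal{F}^{\RP}, \Omega_{Y}^{r})$, which I would verify via Mac Lane's resolution of $\mathcal{F}^{\RP}$ just as in the proof of Proposition \ref{prop: from RP to RPS}: the homogeneous parts are summands of $\Lambda[(\mathcal{F}^{\RP})^{m}]$, each $(\mathcal{F}^{\RP})^{m}$ is represented by the affine relatively perfectly smooth $Y$-scheme obtained as an $m$-fold fiber product of the relative perfection of the total space of $\mathcal{F}^{\vee}$, and $\Omega_{Y}^{r}$ is a quasi-coherent sheaf whose higher étale cohomology on such an affine vanishes.

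For the second and third complexes, Proposition \ref{prop: etale duality pushes to Zariski duality} combined with the hypothesis $R \varepsilon_{\ast} M = M$ yields a canonical isomorphism between them. Writing the first complex as $N[-1]$ with $N := \sheafext^{1}_{Y_{\RPS}}(\mathcal{F}^{\RP}, \nu(r))$, concentration of the second in degree $1$ is equivalent to $R^{i} \varepsilon_{\ast} N = 0$ for all $i \ge 1$. From the Cartier computation in the previous step, $N$ is the cokernel of $C - 1$ acting on $\sheafhom_{Y_{\RPS}}(\mathcal{F}^{\RP}, \Omega_{Y}^{r})$, a sheaf built from coherent-type objects for which, via the same Mac Lane resolution and the agreement of étale and Zariski higher cohomology of coherent sheaves on affines, the vanishing holds.

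The main obstacle will be the Mac Lane computation of the preceding paragraph, establishing that $R\sheafhom_{Y_{\RPS}}(\mathcal{F}^{\RP}, \Omega_{Y}^{r})$ is concentrated in degree zero. It is closely analogous to the site-comparison computation carried out inside Proposition \ref{prop: from RP to RPS}, but requires the additional ingredient that the restriction of $\Omega_{Y}^{r}$ to the representing relatively perfectly smooth $Y$-scheme has vanishing higher cohomology, which amounts to ordinary vanishing of higher Zariski cohomology of a coherent sheaf on an affine. Once that input is in place, the Cartier triangle, Proposition \ref{prop: etale duality pushes to Zariski duality}, and the dévissage mesh together to finish the proof.
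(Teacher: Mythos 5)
Your proposal uses the same two ingredients the paper cites (\cite[Theorem~3.2~(ii)]{Kat86} and Proposition~\ref{prop: etale duality pushes to Zariski duality}) together with the expected d\'evissage, so the overall route is the one the paper intends. However, one intermediate step you introduce is not a proof as stated. You reduce, via the Cartier triangle, to showing that $R\sheafhom_{Y_{\RPS}}(\mathcal{F}^{\RP},\Omega_{Y}^{r})$ is concentrated in degree $0$, and you claim to verify this with Mac~Lane's resolution ``just as in'' Proposition~\ref{prop: from RP to RPS}. But what the Mac~Lane argument actually buys you is that each term $\Lambda[(\mathcal{F}^{\RP})^{m}]$ of the resolution is $\sheafhom(\;\cdot\;,\Omega_{Y}^{r})$-acyclic (because higher \'etale cohomology of a quasi-coherent sheaf on an affine vanishes), so that the cochain complex $\sheafhom(M_{\bullet},\Omega_{Y}^{r})$ computes $\sheafext^{q}(\mathcal{F}^{\RP},\Omega_{Y}^{r})$. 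It does \emph{not} tell you that this cochain complex has vanishing cohomology in positive degrees --- that would be a Mac~Lane-cohomology vanishing statement, which is a nontrivial input, not a formal consequence of acyclicity of the resolution terms. In Proposition~\ref{prop: from RP to RPS} the Mac~Lane resolution is used only to \emph{compare} two sites, not to prove concentration of $R\sheafhom$, so the analogy you draw does not carry the weight you need.

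This missing step is exactly what \cite[Theorem~3.2~(ii)]{Kat86} is there to supply: it gives directly that $R\sheafhom_{Y_{\RP}}(\mathcal{F}^{\RP},\nu(r))$ is concentrated in degree $1$ (and identifies the degree-$1$ sheaf as a coherent-type object), not merely the vanishing of the $\sheafhom$ in degree $0$. Transporting via Proposition~\ref{prop: from RP to RPS} and then applying d\'evissage handles the first complex in full; there is no need for the Cartier-triangle detour or the Mac~Lane computation of $\sheafext^{\ge 2}$. With the degree-$1$ sheaf $N$ identified as a finite-filtered quasi-coherent-type object, the vanishing $R^{i}\varepsilon_{\ast}N=0$ for $i\ge 1$ is immediate (\'etale and Zariski cohomology of quasi-coherent sheaves agree on affines), which gives concentration of the second complex, and Proposition~\ref{prop: etale duality pushes to Zariski duality} then gives the third. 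Your final paragraph invoking ``the same Mac~Lane resolution'' for $R^{i}\varepsilon_{\ast}N$ is again gesturing at this coherence, but it should be phrased as an appeal to what \cite[Theorem~3.2~(ii)]{Kat86} says about $N$, not as a fresh Mac~Lane computation.
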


\begin{proof}
	This follows from \cite[Theorem 3.2 (ii)]{Kat86}
	and Proposition \ref{prop: etale duality pushes to Zariski duality}.
\end{proof}

Let $\mathcal{E}' = R \varepsilon_{\ast} \mathcal{E}$;
cf.\ the paragraph before \cite[Theorem (6.7)]{BK86}.

\begin{Prop} \label{prop: Zariski stalks of nearby cycles}
	Assume that $X = \Spec A$ is affine and $Y = \Spec R$ has a $p$-base.
	Let $R'$ be the local ring of a relatively perfectly smooth $R$-algebra at a prime ideal.
	Then for any $q$, the stalk of the Zariski sheaf $H^{q} \mathcal{E}'$ at the closed point of $\Spec R'$
	is given by $H^{q}(R'_{\Hat{A}_{K}}, \Lambda(q))$ (cohomology in the \'etale topology).
\end{Prop}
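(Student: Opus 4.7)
The plan is to identify the Zariski stalk with an \'etale cohomology group over the pro-scheme $\Spec R'$, and then recast this cohomology through the canonical lifting of $R'$ together with ind-smooth approximation.

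First I would identify the stalk. Since $\mathcal{E}' = R \varepsilon_{\ast} \mathcal{E}$ and Zariski stalks of complexes commute with taking cohomology sheaves, the stalk in question is the filtered colimit $\varinjlim_{V} H^{q}(V_{\et}, R \Psi \Lambda(q))$, where $V$ runs over affine Zariski-open neighborhoods of the prime $\ideal{q}_{1}$ in the ambient $\Spec R_{1}^{\RP}$ (so that $R'$ is the local ring at $\ideal{q}_{1}$). By continuity of \'etale cohomology for cofiltered inverse systems of schemes with affine transition maps, this colimit equals $H^{q}((\Spec R')_{\et}, R \Psi \Lambda(q))$.

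Next I would compute each member of the colimit by passing through the canonical lifting. Corollary \ref{cor: relatively perfect schemes over X and Y} provides the adjunction in which $V \mapsto V_{\Hat{X}}$ is right adjoint to the reduction functor $X' \mapsto X' \times_{X} Y$, and formal \'etaleness mod $\ideal{p}_{K}^{n}$ of canonical lifts (\cite[Lemma 1]{Kat82}) implies that \'etale covers of $V$ lift bijectively to \'etale covers of $V_{\Hat{X}}$. From these two facts one obtains the sheaf-level identity $(i^{\ast} G)(V) = G(V_{\Hat{X}})$, and its derived upgrade gives
\[
		R \Gamma(V_{\et}, R \Psi F)
	=
		R \Gamma((V_{\Hat{X}})_{\et}, R j_{\ast} F)
	=
		R \Gamma((V_{\Hat{U}})_{\et}, F)
\]
for any torsion \'etale sheaf $F$ on $U_{\et}$.

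Finally, to pass to the limit, I would apply Proposition \ref{prop: ind smooth approximation} to obtain an ind-smooth local lift $A'$ of $R'$ over $A$ with $\pi$-adic completion $R'_{\Hat{A}}$. Writing $A' = \varinjlim B_{\alpha}$ as a filtered colimit of smooth $A$-algebras chosen compatibly with the Zariski neighborhoods $V$ of the closed point and applying Proposition \ref{prop: ind smooth approximation of cohomology} level-by-level and in the limit, one identifies $\varinjlim_{V} R \Gamma((V_{\Hat{U}})_{\et}, \Lambda(q))$ with $R \Gamma((\Spec A'_{K})_{\et}, \Lambda(q)) = R \Gamma((\Spec R'_{\Hat{A}_{K}})_{\et}, \Lambda(q))$. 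The main obstacle is precisely this last step: the canonical lifting is a right adjoint and does not literally commute with the filtered colimit over Zariski neighborhoods, so one must carefully thread the system of (complete) canonical lifts $V_{\Hat{X}}$ through the ind-smooth (non-complete) lift $A'$ via Fujiwara-Gabber's formal base change to identify the two colimits cohomologically.
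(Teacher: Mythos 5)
Your opening step (identifying the Zariski stalk with $\varinjlim_{V} H^{q}(V_{\et}, R\Psi\Lambda(q)) = H^{q}((\Spec R')_{\et}, i^{\ast}Rj_{\ast}\Lambda(q))$) agrees with the paper. After that the two arguments diverge, and yours has a genuine gap. The paper finishes in one stroke: it notes that $(R'_{\Hat A}, R'_{\Hat A}\ideal{p}_{K})$ is a henselian pair, applies Gabber's affine analog of the proper base change theorem \cite{Gab94} to conclude $H^{q}(R'_{\et}, i^{\ast}Rj_{\ast}\Lambda(q)) \cong H^{q}(R'_{\Hat A}, Rj_{\ast}\Lambda(q))$, and then identifies the latter with $H^{q}(R'_{\Hat A_{K}}, \Lambda(q))$. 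No ind-smooth lifts, no Fujiwara--Gabber, no limit manipulations.

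The gap in your step 2 is the ``derived upgrade.'' The adjunction in Corollary \ref{cor: relatively perfect schemes over X and Y} and the formal \'etaleness of $R'_{\Hat A}/R'_{\Hat A}\ideal{p}_{K}^{n}$ mod $\ideal{p}_{K}^{n}$ give you an equivalence of \'etale sites only between $V$ and the \emph{finite-level} thickenings $V_{\Hat X} \bmod \ideal{p}_{K}^{n}$; they do not by themselves give $R\Gamma(V_{\et}, R\Psi F) = R\Gamma((V_{\Hat U})_{\et}, F)$, because $V_{\Hat X}$ is a genuine complete ring, not a nilpotent thickening. The passage from the special fiber to the full complete ring is exactly the content of Gabber's theorem for the henselian pair $(V_{\Hat X}, V_{\Hat X}\ideal{p}_{K})$, which you would need to invoke explicitly at this point. (The ``bijective lifting of \'etale covers'' you mention is only the finite-\'etale part of that story and does not give the full cohomological statement.) Your step 3 is then both unnecessary and unresolved: once Gabber's theorem is applied to the single pair $(R'_{\Hat A}, R'_{\Hat A}\ideal{p}_{K})$, the colimit bookkeeping over $V$ and the detour through ind-smooth lifts $A'$ and Fujiwara--Gabber formal base change are not needed at all. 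The ind-smooth approximation machinery is used elsewhere in the paper (e.g.\ to reduce to Bloch--Kato's computations), but not in the proof of this proposition.
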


\begin{proof}
	The stalk is given by $H^{q}(R'_{\et}, i^{\ast} R j_{\ast} \Lambda(q))$.
	Since the pair $(R'_{\Hat{A}}, R'_{\Hat{A}} \ideal{p}_{K})$ is henselian,
	this group is isomorphic to $H^{q}(R'_{\Hat{A}}, R j_{\ast} \Lambda(q))$
	by Gabber's affine analog of proper base change \cite[Theorem 1]{Gab94}.
	This final group is isomorphic to $H^{q}(R'_{\Hat{A}_{K}}, \Lambda(q))$.
\end{proof}

Define a filtration on $H^{q} \mathcal{E}'$ in the same way as in the case of $H^{q} \mathcal{E}$.
As in Proposition \ref{prop: calculation of graded pieces},
we have the following.

\begin{Prop} \label{prop: calculation of graded pieces in Zariski} \mbox{}
	\begin{enumerate}
		\item
			We have
				\[
						\gr^{0} H^{q} \mathcal{E}'
					\cong
						\nu(q) \oplus \nu(q - 1).
				\]
		\item
			If $1 \le m < e'$ and $p \nmid m$, then
				\[
						\gr^{m} H^{q} \mathcal{E}'
					\cong
						\Omega_{Y}^{q - 1}.
				\]
		\item
			If $1 \le m < e'$ and $p \mid m$, then
				\[
						\gr^{m} H^{q} \mathcal{E}'
					\cong
						d \Omega_{Y}^{q - 1} \oplus d \Omega_{Y}^{q - 2}.
				\]
		\item
			We have
				\[
						U^{e'} H^{q} \mathcal{E}'
					\cong
							\Omega_{Y}^{q - 1} / (1 + a C) \Omega_{Y, d = 0}^{q - 1}
						\oplus
							\Omega_{Y}^{q - 2} / (1 + a C) \Omega_{Y, d = 0}^{q - 1},
				\]
			where $a \in k$ is the residue class of $p \pi^{-e}$.
		\item
			If $m > e'$, then
				\[
						U^{m} H^{q} \mathcal{E}'
					=
						0.
				\]
	\end{enumerate}
\end{Prop}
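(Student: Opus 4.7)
The plan is to mirror the proof of Proposition \ref{prop: calculation of graded pieces} but in the Zariski topology, reducing the claim stalk-wise to the computation in \cite{BK86}. First I would fix a local relatively perfectly smooth $R$-algebra $R'$ with closed point $\ideal{q}'$ and use Proposition \ref{prop: Zariski stalks of nearby cycles} to identify the stalk of $H^{q} \mathcal{E}'$ at $\ideal{q}'$ with $H^{q}(R'_{\Hat{A}_{K}}, \Lambda(q))$. The Zariski subsheaves $U^{m} H^{q} \mathcal{E}'$, defined by symbols $\{x_{1}, \dots, x_{q}\}$ with $x_{1} - 1 \in \pi^{m} i^{\ast} \Ga$, then have stalks equal to the analogous symbol subgroups of $H^{q}(R'_{\Hat{A}_{K}}, \Lambda(q))$, since any unit on $R'_{\Hat{A}_{K}}$ lifts from some Zariski neighborhood of $\ideal{q}'$.

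Next I would pass to an ind-smooth lifting $A'$ of $R'$ over $A$ from Proposition \ref{prop: ind smooth approximation}, so that by Proposition \ref{prop: ind smooth approximation of cohomology} the stalk becomes $H^{q}(A'_{K}, \Lambda(q))$ for $A'$ ind-smooth over $\Order_{K}$. At this point Bloch-Kato's calculation \cite[Corollary (1.4.1)]{BK86} (together with their description of the top layer of the symbol filtration) gives all five formulas: items (1)--(3) for the graded pieces in the range $0 \le m < e'$, item (4) for the piece $U^{e'}$ expressed via the cokernel of $1 + a C$ with $a$ the residue class of $p \pi^{-e}$, and item (5) for the vanishing of $U^{m}$ for $m > e'$.

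Items (1)--(3) agree verbatim with Proposition \ref{prop: calculation of graded pieces} because they involve quasi-coherent quotients of differential forms whose Zariski and étale sheafifications coincide. The content of items (4)--(5) captures precisely the difference between the two topologies: in the étale setting, the Artin-Schreier-like cokernel of $1 + a C$ on $\Omega_{Y, d = 0}^{q - 1}$ vanishes étale-locally (since $1 + a C$ is étale-locally surjective), which is why $U^{e'} H^{q} \mathcal{E} = 0$ in Proposition \ref{prop: calculation of graded pieces}, whereas Zariski-locally this cokernel persists and yields the stated formula for $U^{e'} H^{q} \mathcal{E}'$.

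The main obstacle I expect is the compatibility asserted in the first paragraph: one must verify that the symbol-generated Zariski subsheaves $U^{m} H^{q} \mathcal{E}'$ coincide on stalks with the Bloch-Kato symbol filtration on $H^{q}(R'_{\Hat{A}_{K}}, \Lambda(q))$. This reduces to showing that every finite tuple of units in $R'_{\Hat{A}_{K}}$ extends to a tuple of units in $R''_{\Hat{A}_{K}}$ for some relatively perfectly smooth Zariski neighborhood $R''$ of $\ideal{q}'$, which should follow from the construction of the canonical lifting together with the fact that $R'$ is a filtered colimit of such $R''$ and that units descend through the reduction map modulo $\ideal{p}_{K}$.
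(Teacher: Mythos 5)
Your stalk-wise plan via Propositions \ref{prop: Zariski stalks of nearby cycles}, \ref{prop: ind smooth approximation} and \ref{prop: ind smooth approximation of cohomology} is a reasonable framework, and your observation that items (1)--(3) match Proposition \ref{prop: calculation of graded pieces} because the relevant graded pieces are (successive extensions of) quasi-coherent sheaves is correct. The gap is in items (4) and (5), and it is not a technical one about lifting units. The reference you lean on, \cite[Corollary (1.4.1)]{BK86}, is the computation of the symbol filtration on the \emph{strictly henselian} stalks; this is precisely the input to Proposition \ref{prop: calculation of graded pieces}, and there it yields $U^{e'} = 0$. That corollary therefore cannot, by itself, produce the nonzero cokernel $\Omega_{Y}^{q-1}/(1+aC)\Omega_{Y,d=0}^{q-1}$ of item (4): there is no ``top layer of the symbol filtration'' in Corollary (1.4.1) for it to describe, because in the \'etale theory $U^{e'}$ is already killed. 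The ingredient actually needed is the Zariski-sheaf result \cite[Theorem (6.7)]{BK86}, which is what the paper cites, and whose proof is what you would need to re-run in the relatively perfectly smooth setting. The paper packages this by \emph{applying $R\varepsilon_{\ast}$ to Proposition \ref{prop: calculation of graded pieces}}: the Zariski $U^{e'}$ arises as the $R^{1}\varepsilon_{\ast}$ of the $\nu$-pieces sitting in $\gr^{0}$, i.e.\ as the sheaf $\xi$, and this is exactly where the Artin--Schreier operator $1+aC$ (equivalently $C-1$ after the $(p-1)$-st root normalization of $a$) enters. Your final paragraph gives the right intuition that ``Zariski-locally the cokernel persists,'' but a proof has to either carry out the $R\varepsilon_{\ast}$ argument and track the graded pieces, or faithfully rerun the proof of \cite[Theorem (6.7)]{BK86} on the ind-smooth liftings; citing Corollary (1.4.1) and gesturing at the top layer does neither.
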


\begin{proof}
	Apply $R \varepsilon_{\ast}$ to Proposition \ref{prop: calculation of graded pieces},
	use Proposition \ref{prop: Zariski stalks of nearby cycles}
	instead of \ref{prop: ind smooth approximation of cohomology}
	and argue as in \cite[Theorem (6.7)]{BK86}.
\end{proof}

For the proof of Theorem \ref{thm: main},
we may assume that the element $a$ above has a $(p - 1)$-st root in $k$.
Then $1 + a C$ in the statement may be replaced by $C - 1$.

Applying $R \varepsilon_{\ast}$ to \eqref{eq: duality morphism mod p with root of unity},
we have morphisms
	\[
			\mathcal{E}' \tensor^{L} \mathcal{E}'
		\to
			\mathcal{E}'
		\to
			R \varepsilon_{\ast} \nu(r)[- r - 1]
		\to
			\xi(r)[- r - 2].
	\]
The morphism $\mathcal{E}' \to \xi(r)[- r - 2]$ from the second term to the fourth term is also given by
	\[
			\mathcal{E}'
		\to
			H^{r + 2} \mathcal{E}'[-r - 2]
		=
			\gr^{e'} H^{r + 2} \mathcal{E}'[- r - 2]
		\to
			\xi(r)[- r - 2]
	\]
using Proposition \ref{prop: calculation of graded pieces in Zariski}.
For integers $s, t$ with $s + t = r + 2$, we thus have a pairing
	\begin{equation} \label{eq: Zariski duality pairing}
			H^{s} \mathcal{E}' \times H^{t} \mathcal{E}'
		\to
			H^{r + 2} \mathcal{E}'
		\to
			\xi(r).
	\end{equation}

\begin{Prop} \label{prop: Zariski duality}
	The pairing \eqref{eq: Zariski duality pairing} restricted to
	$U^{l} H^{s} \mathcal{E}' \times U^{m} H^{t} \mathcal{E}'$ is zero
	if $l + m > e'$
	and hence induces a pairing
	$\gr^{l} H^{s} \mathcal{E}' \times \gr^{m} H^{t} \mathcal{E}' \to \xi(r)$
	for $l + m = e'$.
	The induced morphism
		\[
				\gr^{l} H^{s} \mathcal{E}'
			\to
				R \sheafhom_{Y_{\RPSZ}}(\gr^{m} H^{t} \mathcal{E}', \xi(r))
		\]
	is an isomorphism if $l, m > 0$.
\end{Prop}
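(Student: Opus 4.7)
First I would establish the vanishing of the pairing on $U^l H^s \mathcal{E}' \tensor U^m H^t \mathcal{E}'$ when $l + m > e'$. The pairing \eqref{eq: Zariski duality pairing} is induced by the cup product on $\mathcal{E}$, which in terms of the symbol map \eqref{eq: symbol map} corresponds to concatenation of symbols. For generators $\{1 + u \pi^l, x_2, \ldots, x_s\}$ of $U^l H^s \mathcal{E}'$ and $\{1 + v \pi^m, y_2, \ldots, y_t\}$ of $U^m H^t \mathcal{E}'$, standard manipulations via the Steinberg relation $\{w, 1 - w\} = 0$ allow one to rewrite the length $r + 2$ cup-product symbol so that some component has the form $1 + w \pi^{l + m}$ modulo lower filtration. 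Hence the product lies in $U^{l + m} H^{r + 2} \mathcal{E}'$, which is zero for $l + m > e'$ by Proposition \ref{prop: calculation of graded pieces in Zariski}(5). Applying the same argument with $(l, m)$ replaced by $(l + 1, m)$ or $(l, m + 1)$ shows that, in the boundary case $l + m = e'$, the pairing kills $U^{l + 1} \tensor U^m$ and $U^l \tensor U^{m + 1}$, so it factors through $\gr^l H^s \mathcal{E}' \tensor \gr^m H^t \mathcal{E}' \to U^{e'} H^{r + 2} \mathcal{E}' \to \xi(r)$, where the final arrow uses $\Omega_Y^{r + 1} = 0$ and, after assuming (as justified by the remark just after Proposition \ref{prop: calculation of graded pieces in Zariski}) that $a$ has a $(p - 1)$-st root in $k$, the identification $1 + a C = C - 1$.

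For the perfect duality claim with $l + m = e'$ and $l, m > 0$, the plan is to identify the induced pairing explicitly with a wedge product and then invoke Kato's duality for differential forms. By Proposition \ref{prop: calculation of graded pieces in Zariski}(2)--(3), each of $\gr^l H^s \mathcal{E}'$ and $\gr^m H^t \mathcal{E}'$ is either $\Omega_Y^{\bullet - 1}$ or $d \Omega_Y^{\bullet - 1} \oplus d \Omega_Y^{\bullet - 2}$, depending on the $p$-divisibility of $l$ (resp.\ $m$). A direct symbol computation, parallel to Bloch-Kato's at the generic point in \cite{BK86}, identifies the induced pairing, up to sign, with the wedge product landing in $\Omega_Y^r / (C - 1) \Omega_Y^r = \xi(r)$. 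The desired isomorphism $\gr^l H^s \mathcal{E}' \isomto R \sheafhom_{Y_{\RPSZ}}(\gr^m H^t \mathcal{E}', \xi(r))$ then reduces to Kato's duality for differential forms \cite[Theorem 3.2(i)]{Kat86}, combined with the exact sequence $0 \to \nu(r) \to \Omega_Y^r \overset{C - 1}{\longrightarrow} \Omega_Y^r \to \xi(r) \to 0$ on $Y_{\RPSZ}$ used to compute $R \sheafhom_{Y_{\RPSZ}}(\;\cdot\;, \xi(r))$ in terms of $R \sheafhom_{Y_{\RPSZ}}(\;\cdot\;, \Omega_Y^r)$.

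The hard part will be carrying out the explicit symbol calculation that identifies the induced pairing on graded pieces with the wedge product. The bookkeeping is delicate because the description of graded pieces has several cases according to divisibility by $p$, and because the morphism $\rho_m$ used to describe the graded pieces contributes two summands coming from the separate slot involving $\pi$, which must be matched correctly under the pairing. These calculations are essentially the same as those performed in \cite{BK86} at the generic point; they transfer to the Zariski-local statements here via the stalk formula of Proposition \ref{prop: Zariski stalks of nearby cycles} together with the ind-smooth approximation of Proposition \ref{prop: ind smooth approximation}, reducing each local computation to the smooth setting in which the symbol and dlog manipulations of \cite{BK86} directly apply.
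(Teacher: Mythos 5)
Your argument tracks the paper's own proof step for step: reduce the $U^{l}\times U^{m}\to U^{l+m}$ statement to the smooth case via ind-smooth approximation and the Bloch--Kato symbol computations (the paper simply cites \cite[Lemma (4.1)]{BK86} where you re-sketch the Steinberg-relation manipulation), then identify the induced pairing on graded pieces with the wedge product as in \cite{BK86} and conclude by Kato's duality for the relevant differential forms. The only slip is a citation: the duality statement needed at the end for the $\Omega_{Y}$- and $d\Omega_{Y}$-valued graded pieces paired via $\Omega_{Y}^{r}$ is \cite[Theorem 3.2 (ii)]{Kat86}, not 3.2(i), which the paper reserves for the $\nu$-valued pairing in the $\gr^{0}$ case.
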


\begin{proof}
	The morphism $H^{s} \mathcal{E}' \times H^{t} \mathcal{E}' \to H^{r + 2} \mathcal{E}'$
	takes $U^{l} H^{s} \mathcal{E}' \times U^{m} H^{t} \mathcal{E}'$ to
	$U^{l + m} H^{r + 2} \mathcal{E}'$
	by Proposition \ref{prop: ind smooth approximation of cohomology}
	and \cite[Lemma (4.1)]{BK86}.
	The first assertion follows.
	For the second, consider the pairing
	between $\Omega_{Y}^{s - 1}$ and $\Omega_{Y}^{t - 1}$
	with values in $\Omega_{Y}^{r} / d \Omega_{Y}^{r - 1}$ given by the wedge product if $p \nmid m$
	and the pairing between $d \Omega_{Y}^{s - 1} \oplus d \Omega_{Y}^{s - 2}$ and
	$d \Omega_{Y}^{t - 1} \oplus d \Omega_{Y}^{t - 2}$
	with values in $\Omega_{Y}^{r} / d \Omega_{Y}^{r - 1}$ given by
		$
				(d \omega, d \omega') \times (d \tau, d \tau')
			\mapsto
				\omega \wedge d \tau' + \omega' \wedge d \tau
		$
	if $p \mid m$.
	Consider the composite of them with the natural surjection
	$\Omega^{r} / d \Omega_{Y}^{r - 1} \onto \xi(r)$.
	With the isomorphisms in Proposition \ref{prop: calculation of graded pieces in Zariski},
	we have a pairing
	$\gr^{l} H^{s} \mathcal{E}' \times \gr^{m} H^{t} \mathcal{E}' \to \xi(r)$
	for $l + m = e'$.
	This pairing agrees with the stated pairing up to an $\F_{p}^{\times}$-multiple
	if $l, m > 0$
	by the same argument as \cite[Lemma (5.2)]{BK86}.
	With this description and \cite[Theorem 3.2 (ii)]{Kat86},
	we see that the stated induced morphism is an isomorphism.
\end{proof}

\begin{Cor} \label{cor: Zariski duality, from U one to U e prime}
	Let $s + t = r + 2$.
	The pairing \eqref{eq: Zariski duality pairing} induces a pairing
	between $U^{1} H^{s} \mathcal{E}' / U^{e'} H^{s} \mathcal{E}'$
	and $U^{1} H^{t} \mathcal{E}' / U^{e'} H^{t} \mathcal{E}'$
	with values in $\xi(r)$.
	The induced morphism
		\[
				U^{1} H^{s} \mathcal{E}' / U^{e'} H^{s} \mathcal{E}'
			\to
				R \sheafhom_{Y_{\RPSZ}} \bigl(
					U^{1} H^{t} \mathcal{E}' / U^{e'} H^{t} \mathcal{E}',
					\xi(r)
				\bigr)
		\]
	is an isomorphism.
\end{Cor}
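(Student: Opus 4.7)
The approach combines the graded-piece duality of Proposition~\ref{prop: Zariski duality} with a downward induction along the filtration. Write $A_l = U^l H^s\mathcal{E}'/U^{e'} H^s\mathcal{E}'$ and $D_m = U^1 H^t\mathcal{E}'/U^m H^t\mathcal{E}'$ for $1 \le l, m \le e'$, so that $A_{e'} = D_1 = 0$ and the corollary concerns the case $l = 1$, $m = e'$.

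First I check that the pairing \eqref{eq: Zariski duality pairing} factors through the quotients by $U^{e'}$ on both factors. By the multiplicativity of the filtration with respect to cup product (which, as in the proof of Proposition~\ref{prop: Zariski duality}, follows from \cite[Lemma (4.1)]{BK86} via Proposition~\ref{prop: ind smooth approximation of cohomology}) and the vanishing of $U^m H^{r+2}\mathcal{E}'$ for $m > e'$ from Proposition~\ref{prop: calculation of graded pieces in Zariski}\,(5), we have
\[
U^{e'} H^s\mathcal{E}' \cdot U^1 H^t\mathcal{E}' \subset U^{e'+1} H^{r+2}\mathcal{E}' = 0,
\]
and symmetrically $U^1 H^s\mathcal{E}' \cdot U^{e'} H^t\mathcal{E}' = 0$. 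Hence the pairing descends to a morphism $\psi_1 \colon A_1 \to R\sheafhom_{Y_{\RPSZ}}(D_{e'}, \xi(r))$.

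Next I show, by downward induction on $l$, that the analogously defined morphism
\[
\psi_l \colon A_l \to R\sheafhom_{Y_{\RPSZ}}(D_{e'-l+1}, \xi(r))
\]
is an isomorphism for each $1 \le l \le e'$; the base case $l = e'$ is trivial. For the inductive step the short exact sequences
\[
0 \to A_{l+1} \to A_l \to \gr^l H^s\mathcal{E}' \to 0,
\qquad
0 \to \gr^{e'-l} H^t\mathcal{E}' \to D_{e'-l+1} \to D_{e'-l} \to 0
\]
produce, upon applying $R\sheafhom_{Y_{\RPSZ}}(\,\cdot\,, \xi(r))$ to the second, a morphism of distinguished triangles whose left vertical arrow is $\psi_{l+1}$ (using that $(l+1) + (e'-l) > e'$ to see that $\psi_l$ restricted to $A_{l+1}$ factors through $R\sheafhom_{Y_{\RPSZ}}(D_{e'-l}, \xi(r))$) and whose right vertical arrow is the graded-piece duality morphism of Proposition~\ref{prop: Zariski duality}. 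The former is an isomorphism by the inductive hypothesis, the latter is an isomorphism by Proposition~\ref{prop: Zariski duality} (whose hypothesis $l, e'-l > 0$ holds for $1 \le l \le e'-1$), so $\psi_l$ is an isomorphism by the two-out-of-three property for morphisms of distinguished triangles. Setting $l = 1$ then gives the corollary.

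The main obstacle is verifying that the square producing the morphism of triangles actually commutes, which reduces to the strict multiplicativity of the Bloch--Kato filtration under cup product and the identification of the induced pairings on the gradeds with those appearing in Proposition~\ref{prop: Zariski duality}. Both statements are already implicit in the proof of Proposition~\ref{prop: Zariski duality} and the symbol computations of \cite{BK86}, so this step is routine bookkeeping rather than new input.
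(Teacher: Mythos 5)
Your proof is correct, and it supplies exactly the dévissage argument the paper leaves implicit (the corollary is stated without a proof environment, being treated as an immediate consequence of Proposition~\ref{prop: Zariski duality}): a downward induction along the $U^\bullet$-filtration using the graded-piece duality of that proposition, the multiplicativity $U^l \cdot U^m \subset U^{l+m}$ from \cite[Lemma~(4.1)]{BK86}, and the fact that $R\sheafhom_{Y_{\RPSZ}}(\,\cdot\,,\xi(r))$ stays concentrated in degree $0$ on these filtered sheaves so the dual triangles are honest short exact sequences. The commutativity you flag as the ``main obstacle'' at the end is in fact automatic since all the arrows in both squares are induced from the single pairing \eqref{eq: Zariski duality pairing} through compatible quotients, so no further input from \cite{BK86} beyond what Proposition~\ref{prop: Zariski duality} already cites is needed.
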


\begin{Prop} \label{prop: duality for U one}
	Let $s + t = r + 2$.
	The morphism
		\[
				U^{1} H^{s} \mathcal{E} \tensor^{L} U^{1} H^{t} \mathcal{E}
			\to
				\nu(r)[1]
		\]
	in $D(Y_{\RPS}, \Lambda)$ defined in
	Proposition \ref{prop: splitting pairing to unit part and gr zero part}
	gives a perfect duality.
\end{Prop}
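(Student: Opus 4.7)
The plan is to reduce the statement to the Zariski duality of Corollary~\ref{cor: Zariski duality, from U one to U e prime} via the pushforward $\varepsilon_{\ast}$ and Proposition~\ref{prop: etale duality pushes to Zariski duality}.

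First, by Proposition~\ref{prop: calculation of graded pieces} the sheaves $U^{1} H^{s} \mathcal{E}$ and $U^{1} H^{t} \mathcal{E}$ on $Y_{\RPS}$ admit finite filtrations whose graded pieces are direct sums of $\Omega_{Y}^{q - 1}$ and $d \Omega_{Y}^{q - 1}$ for various $q$, all of which are coherent $\Order_{Y}$-modules locally free of finite rank (the latter by the Cartier isomorphism in presence of a local $p$-base). By the proposition between Proposition~\ref{prop: etale duality pushes to Zariski duality} and Proposition~\ref{prop: Zariski stalks of nearby cycles}, $R \sheafhom_{Y_{\RPS}}(U^{1} H^{t} \mathcal{E}, \nu(r))$ is concentrated in degree $1$, and Proposition~\ref{prop: etale duality pushes to Zariski duality} yields
	\[
			R \varepsilon_{\ast}
			R \sheafhom_{Y_{\RPS}}(U^{1} H^{t} \mathcal{E}, \nu(r)[1])
		\isomto
			R \sheafhom_{Y_{\RPSZ}}(\varepsilon_{\ast} U^{1} H^{t} \mathcal{E}, \xi(r)),
	\]
both sides being ordinary sheaves on $Y_{\RPSZ}$. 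Since the source and target of the stated morphism are iterated extensions of locally free coherent sheaves, for which Zariski and \'etale structures agree, the proposition reduces to showing that the induced Zariski morphism
	\[
			\varepsilon_{\ast} U^{1} H^{s} \mathcal{E}
		\to
			\sheafhom_{Y_{\RPSZ}} \bigl(
				\varepsilon_{\ast} U^{1} H^{t} \mathcal{E},
				\xi(r)
			\bigr)
	\]
is an isomorphism.

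Next, I identify $\varepsilon_{\ast} U^{1} H^{q} \mathcal{E}$ with $U^{1} H^{q} \mathcal{E}' / U^{e'} H^{q} \mathcal{E}'$. The symbols generating the Zariski sheaf $U^{1} H^{q} \mathcal{E}'$ also generate sections of $\varepsilon_{\ast} U^{1} H^{q} \mathcal{E}$, giving a natural morphism $U^{1} H^{q} \mathcal{E}' \to \varepsilon_{\ast} U^{1} H^{q} \mathcal{E}$ whose kernel contains $U^{e'} H^{q} \mathcal{E}'$ because $U^{e'} H^{q} \mathcal{E} = 0$ on $Y_{\RPS}$ by Proposition~\ref{prop: calculation of graded pieces}. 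Comparison of the graded pieces for $1 \le m < e'$ computed in Propositions~\ref{prop: calculation of graded pieces} and \ref{prop: calculation of graded pieces in Zariski} shows, by induction on filtration length, that the resulting map $U^{1} H^{q} \mathcal{E}' / U^{e'} H^{q} \mathcal{E}' \to \varepsilon_{\ast} U^{1} H^{q} \mathcal{E}$ is an isomorphism. With this identification, the Zariski morphism in question is precisely the one shown to be an isomorphism in Corollary~\ref{cor: Zariski duality, from U one to U e prime}.

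The main obstacle is verifying that the Zariski morphism induced from the pairing of Proposition~\ref{prop: splitting pairing to unit part and gr zero part} coincides (up to a unit in $\F_{p}$) with the pairing \eqref{eq: Zariski duality pairing} used in Corollary~\ref{cor: Zariski duality, from U one to U e prime}. Both are restrictions of cup product and are characterized by their compatibility with a trace-type morphism: on the \'etale side, $\mathcal{E} \to \nu(r)[-r - 1]$ from Proposition~\ref{prop: boundary map}, and on the Zariski side, the edge map $\mathcal{E}' \to \gr^{e'} H^{r + 2} \mathcal{E}'[-r - 2] \to \xi(r)[-r - 2]$ appearing in the paragraph preceding Proposition~\ref{prop: Zariski duality}. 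These are linked by the connecting morphism $R \varepsilon_{\ast} \nu(r) \to \xi(r)[-1]$ used to construct Proposition~\ref{prop: etale duality pushes to Zariski duality}: chasing through the short exact sequence $0 \to \nu(r) \to \Omega_{Y}^{r} \overset{C - 1}{\to} \Omega_{Y}^{r} \to 0$ on $Y_{\RPS}$ confirms the compatibility, completing the proof.
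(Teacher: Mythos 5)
Your argument is correct and follows essentially the same strategy as the paper's: reduce the \'etale duality statement to the Zariski statement of Corollary \ref{cor: Zariski duality, from U one to U e prime} via $\varepsilon$ and Proposition \ref{prop: etale duality pushes to Zariski duality}, using the fact that the sheaves involved are finite successive extensions of locally free coherent sheaves. The paper carries this out by observing $\varepsilon^{\ast} M = U^{1} H^{s} \mathcal{E}$ with $M = U^{1} H^{s} \mathcal{E}' / U^{e'} H^{s} \mathcal{E}'$ and $M \isomto R\varepsilon_{\ast}\varepsilon^{\ast} M$, whereas you identify $\varepsilon_{\ast} U^{1} H^{q} \mathcal{E}$ with $M$ directly by matching graded pieces; these are the two sides of the same adjunction, and the remaining ingredients are identical.
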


\begin{proof}
	Let $M = U^{1} H^{s} \mathcal{E}' / U^{e'} H^{s} \mathcal{E}'$
	and $N = U^{1} H^{t} \mathcal{E}' / U^{e'} H^{t} \mathcal{E}'$.
	We have $\varepsilon^{\ast} U^{e'} H^{s} \mathcal{E}' = 0$
	and $\varepsilon^{\ast} U^{m} H^{s} \mathcal{E}' = U^{m} H^{s} \mathcal{E}$ for any $m$.
	Hence $\varepsilon^{\ast} M = U^{m} H^{s} \mathcal{E}$
	and $\varepsilon^{\ast} N = U^{m} H^{t} \mathcal{E}$.
	Since $M$ and $N$ are finite successive extensions of locally free sheaves on $Y$ of finite rank
	by Proposition \ref{prop: calculation of graded pieces in Zariski},
	we have $M \isomto R \varepsilon_{\ast} \varepsilon^{\ast} M$
	and $N \isomto R \varepsilon_{\ast} \varepsilon^{\ast} N$.
	The morphism $\varepsilon^{\ast} M \tensor^{L} \varepsilon^{\ast} N \to \nu(r)[1]$
	in question induces a morphism
	$M \tensor^{L} N \to R \varepsilon_{\ast} \nu(r)[1] \to \xi(r)$
	by adjunction,
	which agrees with the pairing in
	Corollary \ref{cor: Zariski duality, from U one to U e prime}.
	Hence the diagram
		\[
			\begin{CD}
					R \varepsilon_{\ast}
					\varepsilon^{\ast} M
				@>>>
					R \varepsilon_{\ast}
					R \sheafhom_{Y_{\RPS}}(\varepsilon^{\ast} N, \nu(r))[1]
				\\
				@|
				@|
				\\
					M
				@>>>
					R \sheafhom_{Y_{\RPSZ}}(N, \xi(r))
			\end{CD}
		\]
	is commutative,
	where the right vertical isomorphism is from
	Proposition \ref{prop: etale duality pushes to Zariski duality}.
	The lower horizontal morphism is an isomorphism
	by Corollary \ref{cor: Zariski duality, from U one to U e prime}.
	Hence the upper horizontal morphism is also an isomorphism.
	Its pullback 
		$
				\varepsilon^{\ast} M
			\to
				R \sheafhom_{Y_{\RPS}}(\varepsilon^{\ast} N, \nu(r))[1]
		$
	is thus an isomorphism.
	This gives the result.
\end{proof}

\begin{Prop} \label{prop: duality is true for n equals one}
	The statement of Theorem \ref{thm: main} \eqref{item: main thm, duality} is true for $n = 1$.
\end{Prop}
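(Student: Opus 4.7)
The plan is first to reduce, via restriction--corestriction along the prime-to-$p$ extension $K(\zeta_p)/K$, to the case where $\zeta_p \in K$, as assumed throughout Sections 5 and 6. Fixing such a root of unity and identifying all Tate twists $\Lambda(q)$ with $\Lambda$, the problem becomes showing that the self-pairing
\[
		\mathcal{E} \tensor^{L} \mathcal{E}
	\to
		\nu(r)[-r-1]
\]
of \eqref{eq: duality morphism mod p with root of unity} induces an isomorphism $\mathcal{E} \isomto R \sheafhom_{Y}(\mathcal{E}, \nu(r)[-r-1])$ in $D_{0}(Y, \Lambda)$. The general statement for $\Lambda(s)$ and $\Lambda(t)$ with $s + t = r + 1$ then follows by twisting back.

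Next I would package the desired duality into an inductive statement over the modified truncations of Proposition \ref{prop: splitting pairing to unit part and gr zero part}: for every integer $s$ with $t = r+1-s$, the pairings constructed there between $\tau_{\le s} \mathcal{E}$ (respectively $\tau_{\le s}' \mathcal{E}$) and $\tau_{\ge t} \mathcal{E}$ (respectively $\tau_{\ge t+1} \mathcal{E}$) induce isomorphisms to the corresponding dualizing $R\sheafhom$. The base case is immediate for $s < 0$, since $\tau_{\le s} \mathcal{E} = 0$. For the induction step I would run the triangulated $5$-lemma on the two distinguished triangles
\[
		\tau_{\le s - 1} \mathcal{E}
	\to
		\tau_{\le s}' \mathcal{E}
	\to
		U^{1} H^{s} \mathcal{E}[-s]
\]
and
\[
		\tau_{\le s}' \mathcal{E}
	\to
		\tau_{\le s} \mathcal{E}
	\to
		\gr^{0} H^{s} \mathcal{E}[-s],
\]
against the corresponding dual triangles on the target side, which are given as \eqref{eq: dual of truncation for modified truncation} and \eqref{eq: dual of modified truncation} of Proposition \ref{prop: splitting pairing to unit part and gr zero part}. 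The induced pairings on the end terms are perfect dualities: the pairing on $\gr^{0} H^{s} \mathcal{E} \tensor^{L} \gr^{0} H^{t} \mathcal{E}$ by Proposition \ref{prop: duality for gr zero}, and the pairing on $U^{1} H^{s} \mathcal{E} \tensor^{L} U^{1} H^{t + 1} \mathcal{E}$ (which matches the index convention $s + (t+1) = r + 2$ of Proposition \ref{prop: duality for U one}) by Proposition \ref{prop: duality for U one}. Thus, given duality for the left term of either triangle, the middle term inherits it. Iterating from $s < 0$ up to $s = r + 1$, the cohomological bound of Proposition \ref{prop: main thm, cohom dimension is true for n equal to one} gives $\mathcal{E} = \tau_{\le r+1} \mathcal{E}$, yielding the desired duality.

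The main obstacle is bookkeeping: verifying that the pairings produced inductively by Proposition \ref{prop: splitting pairing to unit part and gr zero part} really do restrict on the end terms of these triangles to the precise pairings that were shown to be perfect in Propositions \ref{prop: duality for gr zero} and \ref{prop: duality for U one}. This compatibility is forced by the uniqueness clause of Proposition \ref{prop: splitting pairing to unit part and gr zero part}, together with the vanishing statements \eqref{eq: condition to extend the pairing, from units} and \eqref{eq: condition to extend the pairing, from gr zero} established there: both the $\gr^{0}$- and the $U^{1}$-pairing are recovered as the unique morphisms making the relevant squares commute, and this uniqueness identifies them with the pairings of Sections 5 and 6. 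Once this is recorded, the inductive step goes through and yields Theorem \ref{thm: main} \eqref{item: main thm, duality} for $n = 1$.
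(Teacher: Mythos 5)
Your proposal is correct and takes the same approach as the paper, whose proof is a one-line citation of Propositions \ref{prop: splitting pairing to unit part and gr zero part}, \ref{prop: duality for gr zero}, and \ref{prop: duality for U one}; you have spelled out the inductive five-lemma argument on the modified truncation triangles that those citations compress. A minor bookkeeping slip: the first pairing you invoke from Proposition \ref{prop: splitting pairing to unit part and gr zero part} is between $\tau_{\le s}\mathcal{E}$ and $\tau_{\ge t}'\mathcal{E}$ (note the prime on the second factor), not $\tau_{\ge t}\mathcal{E}$.
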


\begin{proof}
	This follows from
	Propositions \ref{prop: splitting pairing to unit part and gr zero part},
	\ref{prop: duality for gr zero} and
	\ref{prop: duality for U one}.
\end{proof}


\section{General case}
Let $n \ge 1$ and $s, t$ with $s + t = r + 1$.

\begin{Prop}
	The statement of Theorem \ref{thm: main} \eqref{item: main thm, cohom dimension} is true.
\end{Prop}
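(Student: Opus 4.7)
The plan is induction on $n \ge 1$, with the base case $n = 1$ furnished by Proposition \ref{prop: main thm, cohom dimension is true for n equal to one}. For the induction step, I would start from the short exact sequence
\[
    0 \to \Lambda_{1}(s) \overset{p^{n-1}}{\to} \Lambda_{n}(s) \to \Lambda_{n-1}(s) \to 0
\]
of \'etale sheaves on $U$, viewed as a short exact sequence of $\Lambda_{n}$-modules. Applying the triangulated functor $R \Psi \colon D(U, \Lambda_{n}) \to D(Y, \Lambda_{n})$ yields a distinguished triangle
\[
    R \Psi \Lambda_{1}(s) \to R \Psi \Lambda_{n}(s) \to R \Psi \Lambda_{n-1}(s)
\]
in $D(Y, \Lambda_{n})$.

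By the base case and the induction hypothesis, the two outer terms belong to $D_{0}(Y, \Lambda_{1})$ and $D_{0}(Y, \Lambda_{n-1})$, respectively, and each is concentrated in degrees $[0, r + 1]$. The defining generators of $D_{0}(Y, \Lambda_{m})$---relative perfections of coherent $\mathcal{O}_{Y}$-modules locally free of finite rank---are killed by $p$ since $Y$ is of characteristic $p$, so the restriction of scalars functor along $\Lambda_{n} \onto \Lambda_{m}$ sends the generating set of $D_{0}(Y, \Lambda_{m})$ into that of $D_{0}(Y, \Lambda_{n})$. Being a triangulated functor, it therefore carries $D_{0}(Y, \Lambda_{m})$ into $D_{0}(Y, \Lambda_{n})$ for every $m \le n$, so both outer terms of the distinguished triangle lie in $D_{0}(Y, \Lambda_{n})$.

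Since $D_{0}(Y, \Lambda_{n})$ is a triangulated subcategory of $D(Y, \Lambda_{n})$ and hence closed under extensions, the middle term $R \Psi \Lambda_{n}(s)$ also belongs to $D_{0}(Y, \Lambda_{n})$. The long exact sequence of cohomology sheaves attached to the distinguished triangle then forces $R \Psi \Lambda_{n}(s)$ to vanish outside degrees $[0, r + 1]$, completing the induction. I do not foresee any serious obstacle: once the mod $p$ calculation underlying Proposition \ref{prop: main thm, cohom dimension is true for n equal to one} is in hand, everything here is formal, the only care required being bookkeeping around which coefficient ring $\Lambda_{m}$ each object is viewed over.
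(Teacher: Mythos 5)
Your proof is correct and follows essentially the same route as the paper: a distinguished triangle relating $R\Psi\Lambda_1(s)$, $R\Psi\Lambda_n(s)$, and $R\Psi\Lambda_{n-1}(s)$, combined with induction on $n$ and the mod $p$ base case. The paper uses the triangle coming from $0 \to \Lambda_{n-1} \to \Lambda_n \to \Lambda_1 \to 0$ rather than $0 \to \Lambda_1 \to \Lambda_n \to \Lambda_{n-1} \to 0$, but that is immaterial; your extra remark about restriction of scalars preserving $D_0$ (because the generators are killed by $p$) is a useful detail that the paper leaves implicit.
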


\begin{proof}
	The distinguished triangle
	$R \Psi \Lambda_{n - 1}(s) \to R \Psi \Lambda_{n}(s) \to R \Psi \Lambda_{1}(s)$
	and induction reduce the statement to the case $n = 1$
	already proven in Proposition \ref{prop: main thm, cohom dimension is true for n equal to one}.
\end{proof}

Hence we have a canonical morphism
	$
			R \Psi \Lambda_{n}(s)
		\tensor^{L}
			R \Psi \Lambda_{n}(t)
		\to
			\nu_{n}(r)[- r - 1]
	$
as explained in Theorem \ref{thm: main}.
We want to prove that it induces a perfect duality as stated.

Denote $R \sheafhom_{(\Lambda_{n})_{Y_{\RPS}}}$ by $[\;\cdot\;, \;\cdot\;]_{n}$.
For $m \le n$,
the exact inclusion $M(Y, \Lambda_{m}) \into M(Y, \Lambda_{n})$ induces a triangulated functor
$D(Y, \Lambda_{m}) \to D(Y, \Lambda_{n})$.
We denote it by $\theta$, but it is frequently omitted from the notation,
and the image of an object $M$ by $\theta$ is simply denoted by just $M$.
Set $\mathcal{E}_{n}^{s} = R \Psi \Lambda_{n}(s)[s]$.
Denote the morphism $\Tr \colon \mathcal{E}_{n}^{r + 1} \to \nu_{n}(r)$ by $\Tr_{n}$.
Hence we have canonical morphisms
	\[
			\mathcal{E}_{n}^{s} \tensor^{L} \mathcal{E}_{n}^{t}
		\to
			\mathcal{E}_{n}^{r + 1}
		\overset{\Tr_{n}}{\to}
			\nu_{n}(r).
	\]

\begin{Prop} \label{prop: duality morphism for smaller power of p}
	For $m \le n$,
	consider the composite of the morphisms
		\[
				\mathcal{E}_{m}^{s}
			\to
				[\mathcal{E}_{m}^{t}, \mathcal{E}_{m}^{r + 1}]_{m}
			\overset{\theta}{\to}
				[\mathcal{E}_{m}^{t}, \mathcal{E}_{m}^{r + 1}]_{n}
			\overset{\formal{p^{n - m}}}{\to}
				[\mathcal{E}_{m}^{t}, \mathcal{E}_{n}^{r + 1}]_{n}
			\overset{\Tr_{n}}{\to}
				[\mathcal{E}_{m}^{t}, \nu_{n}(r)]_{n}
		\]
	and the composite of the morphisms
		\[
				\mathcal{E}_{m}^{s}
			\to
				[\mathcal{E}_{m}^{t}, \mathcal{E}_{m}^{r + 1}]_{m}
			\overset{\Tr_{m}}{\to}
				[\mathcal{E}_{m}^{t}, \nu_{m}(r)]_{m}
			\overset{\theta}{\to}
				[\mathcal{E}_{m}^{t}, \nu_{m}(r)]_{n}
			\overset{\formal{p^{n - m}}}{\to}
				[\mathcal{E}_{m}^{t}, \nu_{n}(r)]_{n}.
		\]
	They are equal.
	If the statement of Theorem \ref{thm: main} \eqref{item: main thm, duality} is true for $m$,
	then they are isomorphisms.
\end{Prop}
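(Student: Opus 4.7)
The plan is to handle the two claims in the proposition in turn: first the equality of Composite~1 and Composite~2, then the isomorphism conclusion under the duality hypothesis for $m$.

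\textbf{Equality.} Both composites reduce to post-composing the structural morphism $\mathcal{E}_m^s \to [\mathcal{E}_m^t, \mathcal{E}_m^{r+1}]_m$ (viewed through $\theta$) with a single morphism $\beta \colon \theta \mathcal{E}_m^{r+1} \to \nu_n(r)$ in $D(Y, \Lambda_n)$: for Composite~1, $\beta_1 = \Tr_n \circ \formal{p^{n-m}}$; for Composite~2, $\beta_2 = \formal{p^{n-m}} \circ \Tr_m$. Both trace morphisms are, by construction, obtained by factoring through the canonical truncation $\mathcal{E}^{r+1} \to H^0(\mathcal{E}^{r+1}) = R^{r+1}\Psi\Lambda(r+1)$ (using that $\mathcal{E}^{r+1}$ sits in degrees $\le 0$). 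By naturality of truncation, both $\beta_1$ and $\beta_2$ factor uniquely as $\theta \mathcal{E}_m^{r+1} \to R^{r+1}\Psi\Lambda_m(r+1) \to \nu_n(r)$, and their restrictions to $R^{r+1}\Psi\Lambda_m(r+1)$ agree by Proposition~\ref{prop: trace morphism and formal multiplication by p} taken with $q = r + 1$, projected onto the $\nu_\bullet(r)$ summand (the $\nu_\bullet(r+1)$ summand vanishes since $Y$ has local $p$-bases of size $r$).

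\textbf{Isomorphism.} Suppose the duality morphism $\mathcal{E}_m^s \isomto [\mathcal{E}_m^t, \nu_m(r)]_m$ is an isomorphism in $D(Y, \Lambda_m)$. Composite~2 then factors as this isomorphism followed by the two-step morphism
\[
    [\mathcal{E}_m^t, \nu_m(r)]_m
    \xrightarrow{\theta}
    [\mathcal{E}_m^t, \nu_m(r)]_n
    \xrightarrow{(\formal{p^{n-m}})_\ast}
    [\mathcal{E}_m^t, \nu_n(r)]_n,
\]
so it suffices to show this two-step morphism is an isomorphism in $D(Y, \Lambda_n)$. The tool is the adjunction between $\theta \colon D(Y, \Lambda_m) \to D(Y, \Lambda_n)$ and its derived right adjoint $R[p^m] = R\sheafhom_{\Lambda_n}(\Lambda_m, \;\cdot\;) \colon D(Y, \Lambda_n) \to D(Y, \Lambda_m)$, which yields the natural identification $R\sheafhom_{\Lambda_n}(\theta F, G) \cong R\sheafhom_{\Lambda_m}(F, R[p^m] G)$. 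Under this identification (applied with $F = \mathcal{E}_m^t$), the two-step morphism above corresponds to the morphism $\nu_m(r) \to R[p^m] \nu_n(r)$ adjoint to $\formal{p^{n-m}} \colon \theta \nu_m(r) \to \nu_n(r)$. The problem thus reduces to proving this adjoint morphism is an isomorphism in $D(Y, \Lambda_m)$.

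\textbf{Main obstacle: the key computation.} The heart of the argument is the claim $R[p^m]\nu_n(r) \cong \nu_m(r)$, concentrated in degree $0$, via this adjoint morphism. I would compute $R[p^m]\nu_n(r)$ via the standard $2$-periodic projective resolution
\[
    \cdots \to \Lambda_n \xrightarrow{p^{n-m}} \Lambda_n \xrightarrow{p^m} \Lambda_n \to \Lambda_m
\]
of $\Lambda_m$ over $\Lambda_n$, which presents $R[p^m]\nu_n(r)$ as the complex $\bigl[\nu_n(r) \xrightarrow{p^m} \nu_n(r) \xrightarrow{p^{n-m}} \nu_n(r) \xrightarrow{p^m} \cdots \bigr]$ in nonnegative degrees. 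The two canonical short exact sequences $0 \to \nu_m(r) \xrightarrow{\formal{p^{n-m}}} \nu_n(r) \to \nu_{n-m}(r) \to 0$ and $0 \to \nu_{n-m}(r) \xrightarrow{\formal{p^m}} \nu_n(r) \to \nu_m(r) \to 0$, together with the factorizations of $p^m$ and $p^{n-m}$ on $\nu_n(r)$ through $\formal{p^m}$ and $\formal{p^{n-m}}$, should force the subsheaf equalities $\Ker(p^m) = \Im(p^{n-m}) = \Im(\formal{p^{n-m}})$ and $\Ker(p^{n-m}) = \Im(p^m) = \Im(\formal{p^m})$ inside $\nu_n(r)$. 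Granted these, the complex is exact in positive degrees, and its $H^0$ is $\nu_n(r)[p^m] \cong \nu_m(r)$ realized by $\formal{p^{n-m}}$, providing the isomorphism. The delicate step is verifying these kernel/image identifications at the sheaf level on the logarithmic de Rham-Witt sheaf $\nu_n(r) = W_n \Omega_{Y, \log}^r$.
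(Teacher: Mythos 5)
Your argument is correct, and you have reproved from scratch what the paper dispatches with a citation. For the equality of the two composites, you follow the same reduction the paper intends: both morphisms are post-compositions (after the common map $\mathcal{E}_m^s \to [\mathcal{E}_m^t,\mathcal{E}_m^{r+1}]_m$) with a morphism $\theta\mathcal{E}_m^{r+1}\to\nu_n(r)$, and since $\Tr_\bullet$ factors through $H^0 = R^{r+1}\Psi\Lambda_\bullet(r+1)$, the equality of those two post-compositions is exactly the commutativity asserted in Proposition~\ref{prop: trace morphism and formal multiplication by p} (projected to the $\nu_\bullet(r)$ summand, $\nu_\bullet(r+1)$ being zero). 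For the isomorphism, the paper's entire argument is to cite \cite[(4.2.4)]{Kat86} for the invertibility of $[\mathcal{E}_m^t,\nu_m(r)]_m \to [\mathcal{E}_m^t,\nu_n(r)]_n$; you instead derive this via the adjunction $R\sheafhom_{\Lambda_n}(\theta F, G) \cong \theta R\sheafhom_{\Lambda_m}(F, R\sheafhom_{\Lambda_n}(\Lambda_m, G))$ and the $2$-periodic free resolution of $\Lambda_m$ over $\Lambda_n$, reducing everything to $R\sheafhom_{\Lambda_n}(\Lambda_m,\nu_n(r))\cong\nu_m(r)$. That route is sound and arguably more self-contained; it also gives the isomorphism for arbitrary $F$, not just $F \in D_0$.

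Your closing hedge (``the delicate step is verifying these kernel/image identifications'') undersells what you already have. The only inputs needed are: (a) the short exact sequences $0 \to \nu_m(r) \xrightarrow{\formal{p^{n-m}}} \nu_n(r) \to \nu_{n-m}(r) \to 0$ and its $(m, n-m)$-swap; and (b) the factorization $p^{n-m} = \formal{p^{n-m}}\circ\mathrm{red}_{n\to m}$ with $\mathrm{red}_{n\to m}$ surjective. Both are precisely what \cite[(4.1.8)]{Kat86} supplies, as restated in the paper just before Proposition~\ref{prop: trace morphism and formal multiplication by p}. From these, $\Im(p^{n-m}) = \Im(\formal{p^{n-m}})$ and $\Ker(p^{n-m}) = \Ker(\mathrm{red}_{n\to m}) = \Im(\formal{p^{m}}) = \Im(p^{m})$ follow formally, and dually with $m \leftrightarrow n-m$; so there is nothing left to verify ``at the sheaf level.'' Beyond that, you should also make explicit the compatibility that, under the identification $R\sheafhom_{\Lambda_n}(\theta F, G) \cong \theta R\sheafhom_{\Lambda_m}(F, R\sheafhom_{\Lambda_n}(\Lambda_m, G))$, the composite $\theta[F,B]_m \to [\theta F,\theta B]_n \to [\theta F,C]_n$ is $\theta$ applied to the map induced by the adjoint $B \to R\sheafhom_{\Lambda_n}(\Lambda_m,C)$ of $\theta B \to C$; this is the standard triangle-identity computation but it is the hinge of your reduction.
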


\begin{proof}
	That they are equal follows from
	Proposition \ref{prop: trace morphism and formal multiplication by p}.
	The composite
		\[
				[\mathcal{E}_{m}^{t}, \nu_{m}(r)]_{m}
			\overset{\theta}{\to}
				[\mathcal{E}_{m}^{t}, \nu_{m}(r)]_{n}
			\overset{\formal{p^{n - m}}}{\to}
				[\mathcal{E}_{m}^{t}, \nu_{n}(r)]_{n}
		\]
	is an isomorphism by \cite[(4.2.4)]{Kat86}.
	Hence the second statement follows.
\end{proof}

\begin{Prop} \label{prop: two out of three for duality morphisms}
	The diagram
		\[
			\begin{CD}
					\mathcal{E}_{1}^{s}
				@>> \formal{p^{n - 1}} >
					\mathcal{E}_{n}^{s}
				@>>>
					\mathcal{E}_{n - 1}^{s}
				\\
				@VVV
				@VVV
				@VVV
				\\
					[\mathcal{E}_{1}^{t}, \nu_{n}(r)]_{n}
				@>>>
					[\mathcal{E}_{n}^{t}, \nu_{n}(r)]_{n}
				@> \formal{p} >>
					[\mathcal{E}_{n - 1}^{t}, \nu_{n}(r)]_{n}
			\end{CD}
		\]
	is a morphism of distinguished triangles,
	where the vertical morphisms are the morphisms of
	Proposition \ref{prop: duality morphism for smaller power of p}
	for $m = 1, n, n - 1$ from the left to the right.
\end{Prop}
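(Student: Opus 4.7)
The plan is to identify the two rows as distinguished triangles and verify commutativity of the three required squares.

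The top row is the distinguished triangle obtained by applying $R \Psi(\;\cdot\;)[s]$ to the short exact sequence
\[
		0
	\to
		\Lambda_{1}(s)
	\overset{\formal{p^{n-1}}}{\to}
		\Lambda_{n}(s)
	\to
		\Lambda_{n-1}(s)
	\to
		0.
\]
The bottom row is obtained by applying $R \sheafhom_{(\Lambda_{n})_{Y}}(\;\cdot\;, \nu_{n}(r))$ to the distinguished triangle $\mathcal{E}_{n-1}^{t} \overset{\formal{p}}{\to} \mathcal{E}_{n}^{t} \to \mathcal{E}_{1}^{t}$ arising from the corresponding short exact sequence $0 \to \Lambda_{n-1}(t) \overset{\formal{p}}{\to} \Lambda_{n}(t) \to \Lambda_{1}(t) \to 0$. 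Both rows are therefore distinguished.

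For the left square, I would unfold the two compositions $\mathcal{E}_{1}^{s} \to [\mathcal{E}_{n}^{t}, \nu_{n}(r)]_{n}$ as elements of $\Hom(\mathcal{E}_{1}^{s} \tensor^{L} \mathcal{E}_{n}^{t}, \nu_{n}(r))$. The right-then-down composition is
\[
		\mathcal{E}_{1}^{s} \tensor^{L} \mathcal{E}_{n}^{t}
	\overset{\formal{p^{n-1}} \tensor 1}{\to}
		\mathcal{E}_{n}^{s} \tensor^{L} \mathcal{E}_{n}^{t}
	\to
		\mathcal{E}_{n}^{r+1}
	\overset{\Tr_{n}}{\to}
		\nu_{n}(r),
\]
while the down-then-right composition, after invoking Proposition \ref{prop: duality morphism for smaller power of p} for $m = 1$, unfolds to
\[
		\mathcal{E}_{1}^{s} \tensor^{L} \mathcal{E}_{n}^{t}
	\overset{1 \tensor \mathrm{red}}{\to}
		\mathcal{E}_{1}^{s} \tensor^{L} \mathcal{E}_{1}^{t}
	\to
		\mathcal{E}_{1}^{r+1}
	\overset{\formal{p^{n-1}} \circ \Tr_{1}}{\to}
		\nu_{n}(r).
\]
By $\Lambda_{n}$-bilinearity of the cup product (i.e., the identity $\text{cup}_{n} \compose (\formal{p^{n-1}} \tensor 1) = \formal{p^{n-1}} \compose \text{cup}_{1} \compose (1 \tensor \mathrm{red})$ at the level of pairings), and by Proposition \ref{prop: trace morphism and formal multiplication by p} in the form $\Tr_{n} \compose \formal{p^{n-1}} = \formal{p^{n-1}} \compose \Tr_{1}$, the two composites agree. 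The right square is verified by a symmetric argument with the roles of inflation and reduction interchanged.

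The main obstacle will be the third compatibility, namely the square involving the connecting morphism $\mathcal{E}_{n-1}^{s} \to \mathcal{E}_{1}^{s}[1]$ on top and the boundary of the $R\sheafhom$-triangle on the bottom. Since both boundaries are Yoneda classes of short exact sequences of $\Lambda_{n}$-modules and the duality morphisms originate from the bi-natural pairing $\mathcal{E}_{n}^{\bullet} \tensor^{L} \mathcal{E}_{n}^{\bullet} \to \mathcal{E}_{n}^{r+1} \overset{\Tr_{n}}{\to} \nu_{n}(r)$, the required compatibility reduces to naturality of pairings under boundary maps. A clean way to make this rigorous is to realize the entire diagram at the level of complexes — via an injective resolution of $\nu_{n}(r)$ together with flat resolutions of $\Lambda_{m}(\;\cdot\;)$ — so that the cup product is strictly bi-linear at the cochain level; the boundary identity is then immediate from bilinearity and the cone construction, and descends to $D(Y, \Lambda_{n})$ to give the desired morphism of distinguished triangles.
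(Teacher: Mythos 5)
Your identification of the two rows as distinguished triangles is correct, and your unfolding of the left and right squares via the adjunction $\Hom(\mathcal{E}_{m}^{s}, [\mathcal{E}_{m'}^{t}, \nu_{n}(r)]_{n}) \cong \Hom(\mathcal{E}_{m}^{s} \tensor^{L} \mathcal{E}_{m'}^{t}, \nu_{n}(r))$ together with Proposition \ref{prop: trace morphism and formal multiplication by p} is sound. You also correctly flag the real issue: commutativity of two of the three squares does not by itself make the given triple of vertical arrows a morphism of distinguished triangles, so the boundary square must be handled.

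The gap is in how you propose to handle it. Saying one should ``realize the entire diagram at the level of complexes'' so that the boundary identity becomes ``immediate from bilinearity and the cone construction'' is a plan, not a proof. The pairing $\mathcal{E}_{n}^{s} \tensor^{L} \mathcal{E}_{n}^{t} \to \nu_{n}(r)$ lives in $D(Y_{\RPS}, \Lambda_{n})$, where $\mathcal{E}_{n}^{\bullet} = R\Psi\Lambda_{n}(\bullet)[\bullet]$ is a composite $i^{\ast} Rj_{\ast}$ of derived functors; choosing resolutions so that the cup product and all three connecting morphisms are simultaneously realized by strict chain maps requires substantial care that you have not supplied. The paper sidesteps this entirely: it first writes down, over $U_{\Et}$, the morphism of distinguished triangles from $\Lambda_{1}(s) \xrightarrow{\formal{p^{n-1}}} \Lambda_{n}(s) \to \Lambda_{n-1}(s)$ to $[\Lambda_{1}(t), \Lambda_{n}(r+1)]_{n} \to [\Lambda_{n}(t), \Lambda_{n}(r+1)]_{n} \xrightarrow{\formal{p}} [\Lambda_{n-1}(t), \Lambda_{n}(r+1)]_{n}$, where the objects are Tate twists of constant sheaves and the vertical arrows are explicit $\Lambda_{n}$-linear pairings, so the check is elementary. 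Applying the exact functor $R\Psi[s]$, composing with the natural transformation $R\Psi R\sheafhom_{U}(\,\cdot\,, \,\cdot\,) \to R\sheafhom_{Y}(R\Psi\,\cdot\,, R\Psi\,\cdot\,)$ (a morphism of functors, hence compatible with triangles in the second variable), and then composing with $\Tr_{n}$ produces the stated diagram as a morphism of distinguished triangles with no boundary-square verification needed downstream. Adopting this ``construct upstream on $U$, then push down with $R\Psi$'' strategy is the missing idea; it bypasses the difficulty you correctly identified rather than trying to overpower it with resolutions of nearby-cycle complexes.
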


\begin{proof}
	Applying $R \Psi[s]$ to the morphism of distinguished triangles
		\[
			\begin{CD}
					\Lambda_{1}(s)
				@>> \formal{p^{n - 1}} >
					\Lambda_{n}(s)
				@>>>
					\Lambda_{n - 1}(s)
				\\
				@VVV
				@VVV
				@VVV
				\\
					[\Lambda_{1}(t), \Lambda_{n}(r + 1)]_{n}
				@>>>
					[\Lambda_{n}(t), \Lambda_{n}(r + 1)]_{n}
				@> \formal{p} >>
					[\Lambda_{n - 1}(t), \Lambda_{n}(r + 1)]_{n},
			\end{CD}
		\]
	we have a morphism of distinguished triangles
		\[
			\begin{CD}
					\mathcal{E}_{1}^{s}
				@>> \formal{p^{n - 1}} >
					\mathcal{E}_{n}^{s}
				@>>>
					\mathcal{E}_{n - 1}^{s}
				\\
				@VVV
				@VVV
				@VVV
				\\
					[\mathcal{E}_{1}^{t}, \mathcal{E}_{n}^{r + 1}]_{n}
				@>>>
					[\mathcal{E}_{n}^{t}, \mathcal{E}_{n}^{r + 1}]_{n}
				@> \formal{p} >>
					[\mathcal{E}_{n - 1}^{t}, \mathcal{E}_{n}^{r + 1}]_{n}.
			\end{CD}
		\]
	The morphism $\Tr_{n}$ gives a morphism of distinguished triangles
	from the lower triangle of this diagram
	to the lower triangle of the stated diagram.
\end{proof}

\begin{Prop}
	The statement of Theorem \ref{thm: main} \eqref{item: main thm, duality} is true.
\end{Prop}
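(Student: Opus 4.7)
The plan is to proceed by induction on $n$, exploiting the two preceding propositions. The base case $n = 1$ is Proposition \ref{prop: duality is true for n equals one}. For the inductive step, assume the statement holds for $n - 1$; we show it for $n$.

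First I would observe that the duality statement for a given $m$ is precisely the assertion that the morphism
	\[
			\mathcal{E}_{m}^{s}
		\to
			[\mathcal{E}_{m}^{t}, \nu_{m}(r)]_{m}
	\]
induced by $\Tr_{m}$ is an isomorphism. By Proposition \ref{prop: duality morphism for smaller power of p}, this is equivalent (after applying the auto-equivalence $[\mathcal{E}_{m}^{t}, \;\cdot\;]$ and using \cite[(4.2.4)]{Kat86}) to the statement that the morphism
	\[
			\mathcal{E}_{m}^{s}
		\to
			[\mathcal{E}_{m}^{t}, \nu_{n}(r)]_{n}
	\]
from Proposition \ref{prop: duality morphism for smaller power of p} is an isomorphism. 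Hence, by the base case, the left vertical arrow of the diagram in Proposition \ref{prop: two out of three for duality morphisms} (the case $m = 1$) is an isomorphism, and by the induction hypothesis applied analogously, the right vertical arrow (the case $m = n - 1$) is an isomorphism.

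Now I would apply the five lemma for distinguished triangles to the morphism of distinguished triangles in Proposition \ref{prop: two out of three for duality morphisms}: since the left and right vertical morphisms are isomorphisms, so is the middle vertical morphism $\mathcal{E}_{n}^{s} \to [\mathcal{E}_{n}^{t}, \nu_{n}(r)]_{n}$. Invoking Proposition \ref{prop: duality morphism for smaller power of p} once more (now with $m = n$) in reverse, this middle isomorphism is precisely the duality statement for $n$. This completes the induction.

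There is no serious obstacle: the machinery of Propositions \ref{prop: duality morphism for smaller power of p} and \ref{prop: two out of three for duality morphisms} has been set up precisely so that this d\'evissage from $n$ to the pair $\{1, n-1\}$ is mechanical. The only subtlety worth double-checking is the compatibility of the two composites in Proposition \ref{prop: duality morphism for smaller power of p}, but that was already established using Proposition \ref{prop: trace morphism and formal multiplication by p}, so it poses no difficulty here.
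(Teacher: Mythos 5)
Your proposal is correct and matches the paper's own argument: induction on $n$ with base case Proposition \ref{prop: duality is true for n equals one}, using Proposition \ref{prop: duality morphism for smaller power of p} to translate the duality statement into invertibility of the vertical arrows, and the two-out-of-three property applied to the morphism of distinguished triangles in Proposition \ref{prop: two out of three for duality morphisms}. The paper states this more tersely, but the d\'evissage you spell out is exactly what is intended.
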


\begin{proof}
	This follows from Propositions \ref{prop: duality is true for n equals one},
	\ref{prop: duality morphism for smaller power of p}
	and \ref{prop: two out of three for duality morphisms}
	by induction.
\end{proof}


\end{document}